\title{Minimizing relative entropy of path measures under marginal constraints}
\date{\today}
\author[Baradat]{Aymeric Baradat}
\address{Aymeric Baradat. Max Planck Institute for the Mathematics in the Sciences, Leipzig, Germany}
\email{aymeric.baradat@mis.mpg.de}
\author[Léonard]{Christian Léonard}
\address{Christian Léonard. Modal-X,  Université Paris Nanterre, France}
\email{christian.leonard@math.cnrs.fr}
 \keywords{}
 \subjclass[2010]{}
\newtheorem{theorem}{Theorem}
\newtheorem{lemma}[theorem]{Lemma}
\newtheorem{proposition}[theorem]{Proposition}
\newtheorem{definition}[theorem]{Definition}
\newtheorem{assumptions}[theorem]{Assumptions}
\newtheorem{assumption}[theorem]{Assumption}
\theoremstyle{remark}
\newtheorem{remark}[theorem]{Remark}
\newtheorem{remarks}[theorem]{Remarks}
\numberwithin{theorem}{section}
\newcommand{\RR}{\mathbb{R}}
\newcommand{\PP}{\mathbb{P}}
\newcommand{\EE}{E}	%\newcommand{\EE}{\mathbb{E}}
\newcommand{\1}{\mathbf{1}}
\newcommand\pf{_{\#}}
\renewcommand{\ae}{\textrm{-a.e.}}
	\DeclareMathOperator{\D}{d\!}
\newcommand{\Boulette}[1]{\par\medskip\noindent $\bullet$\ Proof of #1.}
\newcommand{\sbt}{\,\begin{picture}(-1,1)(-1,-3)\circle*{3}\end{picture}\ }
\newcommand{\cadlag}{c\`adl\`ag}
\newcommand\XX{\mathcal{X}}
\newcommand\XXX{\XX^2}
\newcommand{\YY}{ \mathcal{Y}}
\newcommand\PX{\mathrm{P}(\XX)}
\newcommand\PXX{\mathrm{P}(\XXX)}
\newcommand\PO{\mathrm{P}(\Omega)}
\newcommand\MO{\mathrm{M}(\Omega)}
\newcommand\OO{\Omega}
\newcommand\CO{\mathrm{C}_b(\Omega)}
\newcommand\ii{{[0,1]}}
\newcommand\IX{\int_{\XX}}
\newcommand\IO{\int_\Omega}
\newcommand{\TT}{\mathcal{T}}
\renewcommand{\SS}{\mathcal{S}}
\newcommand{\zz}{\mathcal{Z}}
\newcommand{\rr}{ \mathbf{r}}
\newcommand{\pp}{ \mathbf{p}}
\newcommand{\qq}{ \mathbf{q}}
\begin{abstract} 
%\end{abstract}
\begin{document}
\begin{abstract}
	We study generalizations of the Schr\"odinger problem in statistical mechanics in two directions: when the density is constrained at more than two times, and when the joint law of the initial and final positions for the particles is prescribed. This is done in agreement with the so-called Br\"odinger problem recently introduced to regularize Brenier's variational model for incompressible fluids.
	
	We recover generalizations of the standard factorization result for the Radon-Nikodym derivative of the solution $P$ with respect to the reference measure $R$: this density can be written in terms of an \emph{additive functional} on the set of constrained times. 
	
	The specificity of this work is that we place ourselves in the case when $R$ is Markov (or reciprocal), and that we use Markovian methods rather than classical convex analysis arguments. In this setting, it appears that a natural assumption to be made on the reference measure $R$ is of irreducibility type.
\end{abstract}
\maketitle 
\tableofcontents

\section{Introduction}
In this paper, we are interested in the structure of the solutions to a class of entropy minimization problems among path measures: i.e.\ measures on a path space.  More specifically,  for any path measures $P \ll R$, the relative entropy of $P$ with respect to $R$ is defined by the formula:
\[
H(P|R):=\EE_P\left[\log\frac{\D P}{\D R}\right],
\]
(a  precise definition is given at Section \ref{sec:unbounded}, in particular when $R$ is unbounded) and we minimize $H(\sbt|R)$ under  various marginal constraints, when  the reference path measure $R$ is Markov, or sometimes reciprocal. The definition of a reciprocal path measure and its basic properties are recalled at Definition \ref{def:reciprocal}; in particular, any Markov measure is reciprocal.

\subsection*{Schrödinger problem}

The most classical of these problems is unquestionably the Schrödinger problem and dates back to the 30's with the original articles~\cite{sch31,sch32} by Schrödinger himself. This problem can be informally stated in the following way. Take a population of independent particles uniformly distributed in a box at the initial time, and evolving along Brownian paths. Then at any positive time, one still expects the density of particles to be approximately uniform. But suppose that a very rare event occurs, and that one measures at time  $t=0$ and $T>0$ a density of particles  far from being uniform (in all this text, we will set without loss of generality $T=1$). Conditionally on this rare event, what is the most likely statistical behavior of the particles of the system? The theory of large deviations gives the following answer. If one calls $R$ the law of the Brownian motion starting from the Lebesgue measure, $n$ the number of particles, $Y_1, \dots , Y_n$ the path of each particle and $\mu,\nu$ the observed densities at times $0$ and $1$, then the empirical measure
\[
\frac{1}{n} \sum_{i=1}^n \delta_{Y_i}
\]
must be close to the (unique if exists) minimizer $P$ of the entropy $H (\sbt  | R)$ under the constraints that the initial marginal of $P$ is $\mu$, and its final marginal is $\nu$. The Schödinger problem consists in finding $P$ as a function of $\mu$ and $\nu$. For a precise statement of this problem in a more general setting, see \eqref{eq-11}. We refer to the survey~\cite{leo13} for both a historical presentation and a precise statement of most of the known results about the Schr\"odinger problem.

There has been renewed interest in this problem since we understood its link with the theory of optimal transport. In fact, the Schrödinger problem can be seen as a regularized version of the classical quadratic optimal transport (see \cite{Mika04,leonard2012schrodinger,leo13}) and it is now used to compute numerically the solutions to transport problems~\cite{cuturi2013sinkhorn,benamou2015iterative} using the Sinkhorn algorithm~\cite{sinkhorn1964relationship,sinkhorn1967diagonal}. 

 One of the main results of this theory, which is crucial in the applications as it allows the use of the Sinkhorn algorithm, is the factorization property of $P$: there exist $f$ and $g$ two measurable functions such that if $(X_t)_{t \in [0,1]}$ denotes the canonical process on the set of continuous paths,
\begin{equation}
\label{eq:fg_transform}
P = f(X_0) g(X_1) R.
\end{equation}
This property is classically derived by means of various analytic techniques, sometimes coupled with  geometric analogies~\cite{Beu60,Csi75,follmer1988random,BLN,RT93,RT98,Leo01b}, but we prefer to understand it via a different approach: through Markovian considerations, in the spirit of \cite{leonard2014reciprocal}. More precisely, if $R$ is Markov, then $P$ needs to be Markov (see Lemma~\ref{res-05}), and the density of $P$ with respect to $R$ needs to be a function of $X_0$ and $X_1$ (see Theorem~\ref{thm:sol_schro_additive}). But under additional irreducibility assumptions on $R$ that  will be detailed at Section \ref{sec-sum}, the only way to satisfy these two properties is to be factorized.

\subsection*{Aim of this paper}

The goal of this article is to study how far these arguments can go when further constraints are added. The typical kinds of constraints we want to deal with are
\begin{enumerate}[(i)]
\item \label{item:marginal} marginal constraints, prescribing the law of $X_t$ under $P$ for all times  $t$ in a given set $\TT\subset\ii;$
\item \label{item:endpoint} endpoint constraints, prescribing the joint law of $(X_0, X_1)$ under $P$.
\end{enumerate}

Our main motivation is the understanding of the so-called Brödinger problem, which is a mixture of the Schrödinger problem just described and the Brenier problem for perfect fluids, presented and studied for the first time in~\cite{bre89}. It consists in finding the minimizer $P$ of relative entropy with respect to the reversible Brownian motion (say on the flat torus~$\mathbb{T}^d$) when the law of $X_t$ under $P$ is prescribed to be the Lebesgue measure at all times $t$ (this is  incompressibility), and an endpoint constraint is added. The reader can find information on the Brödinger problem in the recent papers~\cite{arn17,baradat2018existence,baradat2019small}.

By adding the constraints~\eqref{item:marginal} and~\eqref{item:endpoint}, the factorization property~\eqref{eq:fg_transform} translates formally into
\begin{equation}
\label{eq:general_factorization}
\frac{\D P}{\D R} = \exp\Big( \eta(X_0, X_1) + A([0,1])\Big),
\end{equation} 
where $\eta$ is measurable and comes from the endpoint constraint, and $A$ is what we call an \emph{additive functional} only charging $\TT$ and comes from the density constraints. We will be more specific in Definition~\ref{def:additive_functional} below, but $A$ is essentially a random finitely additive measure (or \emph{content}) which only charges $\TT$ and such that for all interval $I$, $A(I)$ only depends on the values of the canonical process $X_t$ for $t \in I$. An important observation is that in the case of endpoint constraints (leading to $\eta \neq 0$), $P$ is not Markov, even when $R$ is Markov. This leads to additional difficulties. However, it inherits a more general property, namely the reciprocity, which is still tractable in some cases.
	
We could not reach formula~\eqref{eq:general_factorization} in full generality; see Remark~\ref{rem:general_doesnt_work}. However the main results of this paper are two particular cases. In Theorem~\ref{thm:sol_schro_additive}, we treat the case when there is no endpoint constraint (and hence $\eta=0$). In Theorem~\ref{thm:bro}, we solve the case when there is an endpoint constraint, but $\TT$ is finite. 

As a consequence, we do not get a fully satisfactory description of the solutions of the Brödinger problem, but rather of a discrete version of it where only finitely many time marginals are prescribed.

\subsection*{Notation} In the whole paper, $\XX$ denotes a Polish space, and the \emph{path space} $\Omega$ is the set of all continuous curves from $[0,1]$ to $\XX.$  For all $t \in [0,1]$, $X_t$ (called the canonical process at time $t$) is the evaluation map at time $t$, that is for all $\omega \in \Omega$, $X_t(\omega) = \omega(t)$.  If $\mathcal{Z}$ is a measurable space, $\mathrm{M}(\mathcal{Z})$ and $\mathrm{P}(\mathcal{Z})$ will be the sets of measures and probability measures on $\mathcal{Z}$ respectively. We recall that when endowed with the  topology of uniform convergence, $\Omega$ is a Polish space and its Borel $ \sigma$-field is the natural $ \sigma$-field $ \sigma(X_t; 0\le t\le 1)$ generated by the canonical process $(X_t) _{ 0\le t\le 1}$. In that case, $\MO$ and $\PO$ stand for the set of Borel measures on $\Omega$ (that we call \emph{path measures}) and the set of Borel probability measures on $\Omega$. 

If $\mathcal{T}$ is a subset of $[0,1]$, we denote $X_{\mathcal{T}}:=(X_t) _{ t\in \mathcal{T}}.$   If $R \in \MO$ is conditionable in the sense given in Definition \ref{def:conditionable} and $t_1, \dots, t_p \in [0,1]$, $R_{t_1, \dots, t_p}$  stands for the law of $(X_{t_1}, \dots, X_{t_p})$ under $R$ and $R_{\mathcal{T}}$  stands for the law of $X_{\mathcal{T}}$ under $R$. \\ More generally, if $\mathcal{Z}_1$ and $\mathcal{Z}_2$ are two measurable spaces, if $\rr \in \mathrm{M}(\mathcal{Z}_1)$ and if $X: \mathcal{Z}_1 \to \mathcal{Z}_2$ is measurable, then the law of $A$ under $\rr$ is denoted by~$X \pf \rr$. 
If $\mathcal{Z}_1$ is Polish and $X \pf \rr$ is $\sigma$-finite, according to the disintegration theorem, then for $X \pf \rr$-almost all $x$, the conditional probability $\rr( \sbt  | X = x)$ exists. It will be called $\rr^x$, and $\rr^X : \mathcal{Z}_1 \to \mathrm{P}(\mathcal{Z}_1)$ will be the random variable with measure values almost everywhere defined by
\[
\rr\mbox{-a.e.} \quad X = x \quad  \Rightarrow \quad \rr^X = \rr^x.
\]

\subsection*{Outline of the paper}
Let $R\in\MO$ be a fixed reference path measure and $P$ be any  path probability measure $P\in\PO$  dominated by $R$, i.e.\ $P$ is absolutely continuous with respect to $R$: $P\ll R$. 

At Section \ref{sec-mark}, we derive characterizations in terms of additive functionals of the Radon-Nikodym density $\D P/ \D R$ in the case when both $P$ and $R$ are Markov.
This result is stated at Theorem~\ref{res-02}: $\D P / \D R = \exp(A([0,1]))$ for some additive functional $A$.

In Section~\ref{sec-sum}, we ask the following question. Let us assume that for some times $0 \leq s < u < t \leq 1$ and some measurable functions $a$, $b$ and $c$, we have:
	\begin{equation*}
	f(X_s,X_t)= a(X _{ [s,u ]}) + b( X _{ [u ,t]}), \qquad R\ae
	\end{equation*}
In which cases is it true that there are two measurable functions $f_s$ and $f_t$ such that:
\begin{equation*}
f(X_s,X_t) = f_s(X_s) + f_t(X_t), \qquad R\ae?
\end{equation*}
This is where the irreducibility of $R$ will come into play, as we will prove in Lemma~\ref{lem:sum} that a sufficient condition is to ask $R$ to be reciprocal (see Definition~\ref{def:reciprocal}) and irreducible (see Assumption~\ref{ass:irreducibility}). We also provide counterexamples when one of these two conditions fails to be satisfied.

Then, we explore  at Section~\ref{sec-min} some consequences for multimarginal entropic minimization problems. In the Schr\"odinger case, when $R$ is Markov, then $P$ is Markov. If in addition, $R$ is irreducible, the result of Section~\ref{sec-sum} let us build from the additive functional $A$ given by Theorem~\ref{res-02} another additive functional that only charges $\TT$ and which coincide with $A$ on the full interval $[0,1]$. This is stated at Theorem~\ref{thm:sol_schro_additive}.

In Section~\ref{sec:finitely-many}, we apply Theorem~\ref{thm:sol_schro_additive} to get a description of the solution of the Schr\"odinger problem when there is a finite number of density constraints (Theorem~\ref{thm:discrete_schro}). We also get~\eqref{eq:general_factorization} in the Br\"odinger case (\emph{i.e.} when adding an endpoint constraint), still when there is a finite number of density constraints (Theorem~\ref{thm:bro}). In this case, we can even assume that $R$ is only reciprocal. Theorem~\ref{thm:bro} is a consequence of Theorem~\ref{thm:discrete_schro} once noticing that any reciprocal process can be transformed into a Markov process by enlarging the space of states and by "folding" the trajectories; see Lemma~\ref{lem:reciprocal_markov}. As far as we know, this argument is new.

 In Appendix~\ref{regularitymartingale}, we show how a regularity result by Bakry~\cite{bakry1979regularite} for two indices martingales let us get for free a regularity property for the quantity $A([s,t])$ with respect to $s$ and $t$ where $A$ is the additive functional given by Theorem~\ref{res-02}. This regularity is very mild (it is nothing but a generalization of the \emph{càdlàg} property for classical martingales), but it is sufficient to characterize $A$ by its value on a countable amount of intervals. This is often useful to prove that a property which is true $\forall t,\ \mbox{a.e.}$ is also true $\mbox{a.e.}, \ \forall t$.
 
 Finally in Appendix~\ref{app:CT}, we state and prove a standard result in measure theory that shows the relationship between disintegration and absolute continuity of measures. These results are used several times  along the proofs.

\section{Dominated Markov measures}\label{sec-mark}

\subsection*{Basic definitions}

Let us begin with the symmetric definition of the Markov property. First of all, we need to make precise what a conditionable path measure is.

\begin{definition}[Conditionable path measure]
	\label{def:conditionable}
The path measure $Q\in\MO$ is said to  be \emph{conditionable} if for all $t\in\ii,$  $Q_t$ is a $\sigma$-finite measure on $\XX$.
\end{definition}

It is shown in \cite{leonard2014some} that for any  conditionable path measure $Q\in\MO,$ the conditional expectation $\EE_Q[\sbt| X_\mathcal{T}]$ is well-defined for any $\mathcal{T}\subset\ii.$ This is the reason for this definition.

\begin{definition}[Markov measure] 
	\label{def:markov}
A path measure  $Q$ on  $\Omega$ is said to be  \emph{Markov}  if it is conditionable and 
 if for any $t\in[0,1]$ and for any events $A\in \sigma(X_{[0,t]}), B\in \sigma(X_{[t,1]})$
    \begin{equation}\label{eq-01}
    Q(A\cap B| X_t)=Q(A| X_t)Q(B| X_t), \quad Q\ae  
    \end{equation}
\end{definition}
This means that, knowing the present state  $X_t$, the future and  past   informations  $ \sigma(X_{[t,1]})$  and  $ \sigma(X_{[0,t]})$, are  $Q$-independent.  

We will represent the Radon-Nikodym derivatives between laws of Markov processes with the help of \emph{contents} (see \cite{halmos1950measure}). Contents can be understood as "finitely additive measures". 
\begin{definition}[Content]
Denote by $\mathcal{I}$ the subset of $2^{[0,1]}$ composed by every finite unions of intervals of $[0,1]$. The set $\mathcal{I}$ is clearly closed under finite unions and intersections and under complements of individual elements (such a set is sometimes called a field of sets). We say that a function $\mu$ from $\mathcal{I}$ to $[- \infty, + \infty)$ is a \emph{content} and we write $\mu \in \mathfrak{C}([0,1])$ if:
\begin{itemize}
\item $\mu(\emptyset) = 0$,
\item for all $I_1, \, I_2 \in \mathcal{I}$, $\mu(I_1 \cup I_2) = \mu(I_1) + \mu(I_2)$ whenever $I_1 \cap I_2 = \emptyset$. 
\end{itemize}
\end{definition}
\begin{remark}
\label{contentonclosedintervals}
It is a simple exercise to show that if $\nu$ is a function from the set of all intervals of the form $[s,t]$ with $s \leq t$ in $[0,1]$ such that for all $s \leq u \leq v \leq t$ in $[0,1]$,
\begin{equation*}
\begin{aligned}
&\mbox{if }\nu([u,v]) = - \infty, &&\mbox{then } \nu([s,v]) = \nu([u,t]) = \nu([s,t]) = - \infty,\\
&\mbox{else,}&&\nu([s,t]) = \nu([s,v]) + \nu([u,t]) - \nu([u,v]),
\end{aligned}
\end{equation*}
then there is a unique content $\mu$ extending $\nu$. For example, for all $s < t$ in $[0,1]$,
\[
\mu((s,t)) = \nu([s,t]) - \nu(\{s\}) - \nu(\{t\}).
\]
\end{remark}

A typical structure of the contents we will get is the following inner or outer regularity property:
\begin{definition}[Regular content]
	\label{def:regular_content}
We say that a content $\mu \in \mathfrak{C}([0,1])$ is \emph{regular} and we write $\mu \in \mathfrak{C}_r([0,1])$ if one of the two equivalent following properties is satisfied:
\begin{itemize}
\item $ \left\{\begin{array}{ll}
\forall \, 0 \leq s \leq t \leq 1, \, (s,t) \neq (0,1), \quad &\lim_{\substack{(\sigma, \tau) \to (s,t) \\
\sigma \leq s, \, \tau \geq t}}\mu([\sigma, \tau]) = \mu([s,t]),\\
\forall \, 0 \leq s < t \leq 1, & \lim_{\substack{(\sigma, \tau) \to (s,t) \\
\sigma > s, \, \tau < t}} \mu([\sigma, \tau]) \mbox{ exists},
\end{array}\right.$ %\begin{gather*} \end{gather*}
\item[] \null
\item $ \left\{\begin{array}{ll}
\forall \, 0 \leq s < t \leq 1,  \quad& \lim_{\substack{(\sigma, \tau) \to (s,t) \\
\sigma \geq s, \, \tau \leq t}}\mu\big( (\sigma, \tau) \big) = \mu\big((s,t)\big),\\
\forall \, 0 \leq s \leq t \leq 1, \, (s,t) \neq (0,1), \quad &\lim_{\substack{(\sigma, \tau) \to (s,t) \\
\sigma < s, \, \tau > t}} \mu\big( (\sigma, \tau)\big) \mbox{ exists},
\end{array}\right.$
\end{itemize}
\end{definition}
\begin{definition}[Additive functional]
	\label{def:additive_functional}
A mapping $A :\OO\to \mathfrak{C}([0,1])$ is said to be an \emph{additive functional} if it is measurable in the sense that for every $I \in \mathcal{I}$, the function $A(I)$ is $\sigma(X_t, \, t \in I)$-measurable.
\\
We say in addition that $A$ is \emph{regular} if its values are regular.
\end{definition}
 
The proof of Theorem \ref{res-02}, the main result of the present section, relies on the following preliminary Lemma \ref{res-01}.

\subsection*{A general lemma} Lemma~\ref{res-01} gives the structure of the Radon-Nikodym derivative with respect to $\rr$ of a measure $\pp \ll \rr$ sharing some independence properties with $\rr$. The context if the following:

\begin{assumptions}[Framework for Lemma~\ref{res-01}]
	\label{ass:framework}
 Consider $\mathcal{Z}$, $\mathcal{Z}_A$, $\mathcal{Z}_B$ and $\mathcal{Z}_C$ four measurable spaces and three measurable mappings $A:\mathcal{Z} \to \mathcal{Z}_A,$ $B:\mathcal{Z}\to\mathcal{Z}_B$ and $C:\mathcal{Z}\to \mathcal{Z}_C$. We suppose that the $\sigma$-algebra on $\mathcal{Z}$ is generated by $A$, $B$ and $C$. Take $\rr \in \mathrm{M}(\mathcal{Z})$ satisfying:
 \begin{itemize}
 \item the push-forward $C \pf \rr$ is $\sigma$-finite (in particular, $\rr$ is $\sigma$-finite),
 \item under $\rr$, the mappings $A$ and $B$ are independent conditionally on $C$, \textit{i.e.} for all measurable and nonnegative functions $u$ and $v$, 
 \[
 \EE_{\rr}[u(A) v (B) | C] = \EE_{\rr}[u(A) | C] \: \EE_{\rr}[v(B)|C], \quad \rr\mbox{-a.e}.
 \]
 \end{itemize}
\end{assumptions}
 \begin{remark}
 \label{dependanceonC} In this setting, it can be shown that for all measurable and nonnegative functions $U: \mathcal{Z}_A \times \mathcal{Z}_C \to \RR_+$ and $V: \mathcal{Z}_B \times \mathcal{Z}_C \to \RR_+$, we have $\rr$-almost everywhere
 \[
 \EE_{\rr}[U(A,C) V(B,C)|C] =  \EE_{\rr}[ U(A,C)|C] \: \EE_{\rr}[V(B,C)|C].
 \]
 \end{remark}
 
The following Lemma \ref{res-01} will be used during the proof of Theorem~\ref{res-02}. It is a straightforward extension of \cite[Thm.\,1.5]{leonard2014reciprocal}.

\begin{lemma}\label{res-01}
 In the framework of Assumption~\ref{ass:framework}, consider $\pp \in \mathrm{P}(\mathcal{Z})$ such that $\pp \ll \rr$.
 \begin{itemize}
\item If under $\pp$, the measurable mappings $A: \mathcal{Z}\to \mathcal{Z}_A$ and $B: \mathcal{Z}\to \mathcal{Z}_B$ are independent conditionally on $C: \mathcal{Z}\to \mathcal{Z}_C$, then there are three nonnegative measurable functions $\alpha: \mathcal{Z}_A \times \mathcal{Z}_C \to \RR_+$, $\beta: \mathcal{Z}_B \times \mathcal{Z}_C \to \RR_+$ and $\gamma: \mathcal{Z}_C \to \RR_+$ such that $\rr$-almost everywhere,
 \begin{gather}
 \label{eq-01P} \frac{\D \pp}{\D \rr} = \alpha(A, C) \beta (B,C) \gamma(C), \\
\label{defalphabeta} \rr \ae, \quad \gamma(C) \neq 0 \Rightarrow \EE_{\rr}[\alpha(A,C)|C] = \EE_{\rr}[\beta(B,C)|C] = 1.
 \end{gather}
 In that case,
 \begin{equation}
 \label{eq:expression_abc}
 \left\{
  \begin{gathered}
 \gamma = \frac{ \D C\pf \pp }{ \D C\pf \rr },\\
  \alpha \times \gamma = \frac{ \mathrm{d} (A,C)\pf \pp }{ \mathrm{d} (A,C)\pf \rr },\\
 \beta \times \gamma = \frac{ \mathrm{d} (B,C)\pf \pp }{ \mathrm{d} (B,C)\pf \rr }.
 \end{gathered}
 \right.
 \end{equation}
 \item Conversely, if there are two nonnegative function $\alpha$ and $\beta$ such that
 \begin{equation}
 \label{eq:RN_ab}
 \frac{\D \pp}{\D \rr} = \alpha(A,C) \beta(B,C),
 \end{equation}
 then under $\pp$, $A$ and $B$ are independent conditionally on $C$.
 \end{itemize}
\end{lemma}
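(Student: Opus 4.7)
The plan is to reduce both directions of the lemma to a statement about conditional distributions given $C$, so that the conditional-independence hypotheses translate into a product structure of densities fiber-by-fiber. Since $C\pf\rr$ is $\sigma$-finite (hence so is $C\pf\pp$ because $\pp\ll\rr$), the disintegration theorem provides regular conditional probabilities $\rr^c$ and $\pp^c$ on $\mathcal{Z}$ for $C\pf\rr$- and $C\pf\pp$-almost every $c$, and the general fact relating disintegration and absolute continuity (the result recalled in Appendix~\ref{app:CT}) gives that $\pp\ll\rr$ is equivalent to
\[
\gamma:=\frac{\D C\pf\pp}{\D C\pf\rr}\text{ exists, and }\pp^c\ll\rr^c\text{ for }C\pf\rr\text{-a.e.\ }c\text{ with }\gamma(c)\ne 0,
\]
together with the chain rule $\D\pp/\D\rr(z)=\gamma(C(z))\,\D\pp^{C(z)}/\D\rr^{C(z)}(z)$, $\rr$-a.e. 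This identifies $\gamma$ as in~\eqref{eq:expression_abc} and reduces everything to analyzing $\D\pp^c/\D\rr^c$.

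For the direct implication, I would next use the two conditional-independence hypotheses. Under $\rr^c$, the pair $(A,B)$ has law $(A\pf\rr^c)\otimes(B\pf\rr^c)$, and similarly under $\pp^c$ the law is $(A\pf\pp^c)\otimes(B\pf\pp^c)$. Because the $\sigma$-algebra on $\mathcal{Z}$ is generated by $A$, $B$ and $C$, the measures $\rr^c$ and $\pp^c$ are entirely determined by the joint law of $(A,B,C)$ under them, so writing $\pp^c\ll\rr^c$ at the level of $(A,B)$-marginals yields
\[
\frac{\D\pp^c}{\D\rr^c}=\alpha(A,c)\,\beta(B,c), \quad\text{with}\quad\alpha(\cdot,c):=\frac{\D A\pf\pp^c}{\D A\pf\rr^c},\quad\beta(\cdot,c):=\frac{\D B\pf\pp^c}{\D B\pf\rr^c},
\]
for $C\pf\rr$-a.e.\ $c$ with $\gamma(c)\ne0$; away from $\{\gamma\ne0\}$ I just set $\alpha,\beta$ arbitrarily. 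A measurable selection argument (or the product-measurability of the disintegration map $c\mapsto \rr^c$) gives joint measurability of $\alpha$ and $\beta$. Multiplying by $\gamma(C)$ recovers~\eqref{eq-01P}; the normalization~\eqref{defalphabeta} is immediate from the definitions of $\alpha,\beta$ as likelihood ratios of probability measures. Pushing $\pp=\alpha(A,C)\beta(B,C)\gamma(C)\,\rr$ forward by $(A,C)$ and using $\EE_\rr[\beta(B,C)\mid A,C]=\EE_\rr[\beta(B,C)\mid C]=1$ on $\{\gamma\ne0\}$ (by the conditional independence of $A,B$ given $C$ under $\rr$, as extended to functions of $(A,C)$ and $(B,C)$ in Remark~\ref{dependanceonC}) yields the formula for $\alpha\gamma$ in~\eqref{eq:expression_abc}; symmetrically for $\beta\gamma$.

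For the converse, assuming $\D\pp/\D\rr=\alpha(A,C)\beta(B,C)$, I would just check conditional independence directly by the Bayes-type formula for conditional expectation. For nonnegative measurable $u,v$,
\[
\EE_{\pp}[u(A)v(B)\mid C]=\frac{\EE_\rr\bigl[u(A)v(B)\alpha(A,C)\beta(B,C)\mid C\bigr]}{\EE_\rr\bigl[\alpha(A,C)\beta(B,C)\mid C\bigr]},
\]
and Remark~\ref{dependanceonC} factorizes both numerator and denominator as products of a function of $(A,C)$-expectations and a function of $(B,C)$-expectations conditionally on $C$. Taking $v\equiv 1$ and then $u\equiv 1$ identifies the two factors with $\EE_\pp[u(A)\mid C]$ and $\EE_\pp[v(B)\mid C]$, giving conditional independence.

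The main technical obstacle is the bookkeeping with the set $\{\gamma=0\}$ and with the $\sigma$-finite but non-probability character of $\rr$: I must make sure that the disintegration on the $C\pf\rr$-negligible set where $\pp^c$ is not defined, and the set where $\gamma$ vanishes, do not affect any of the formulas; this is handled by restricting all equalities to $\{\gamma\ne 0\}$ (which carries $\pp$) and noting that the statements of~\eqref{eq-01P} and~\eqref{eq:expression_abc} are only $\rr$-a.e.\ identities, so the ambiguity of $\alpha,\beta$ off this set is harmless.
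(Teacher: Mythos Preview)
Your argument is correct and reaches the same conclusions, but the route is genuinely different from the paper's. The paper never explicitly disintegrates $\rr$ and $\pp$ along $C$; instead it writes $D:=\D\pp/\D\rr$ and \emph{defines} $\gamma(C):=\EE_\rr[D\mid C]$, $\alpha(A,C):=\EE_\rr[D\mid A,C]/\gamma(C)$ (and symmetrically $\beta$) directly as conditional expectations of $D$, then verifies $D=\alpha\beta\gamma$ by testing against $u(A)v(B)w(C)$ and using the conditional independence under both $\rr$ and $\pp$. Your approach instead passes to the fibers $\rr^c,\pp^c$, observes that on each fiber both measures are product laws in $(A,B)$, and reads off the factorization of $\D\pp^c/\D\rr^c$ as a product of marginal likelihood ratios.

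What each buys: the paper's definition via $\EE_\rr[D\mid A,C]$ and $\EE_\rr[D\mid C]$ gives joint measurability of $\alpha,\beta,\gamma$ for free, so no measurable-selection or kernel-measurability argument is needed; the price is a longer verification that the product formula actually holds. Your disintegration picture makes the product structure conceptually transparent (the factorization is literally the tensor-product density on each fiber), but you then have to argue that $(a,c)\mapsto \D A\pf\pp^c/\D A\pf\rr^c(a)$ can be chosen jointly measurable. This is standard but is exactly the step the paper sidesteps. For the converse implication and for deriving~\eqref{eq:expression_abc}, your Bayes-formula computation is essentially identical to the paper's.
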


\begin{proof}
For the first point, let
 \[
 D := \frac{\D \pp}{\D \rr}.
 \]
Because $C \pf \rr$ is $\sigma$-finite, so are $(A,C) \pf \rr$ and $(B,C) \pf \rr$. As a consequence, the following functions are well defined $\rr$-almost everywhere:
\begin{gather*}
\gamma (C) := \EE_\rr[D|C], \\
\alpha (A,C) := \left\{ \begin{array}{l l} \displaystyle{\frac{\EE_\rr[D|A,C]}{\gamma(C)}} &\mbox{if } \gamma(C) \neq 0, \\
0 & \mbox{else}, \end{array} \right. \\
\beta (B,C) := \left\{ \begin{array}{l l} \displaystyle{\frac{\EE_\rr[D|B,C]}{\gamma(C)}} &\mbox{if } \gamma(C) \neq 0, \\
0 & \mbox{else}. \end{array}\right. 
\end{gather*}
On $\{\gamma(C) \neq 0 \}$, it is easily shown that $\rr$-almost everywhere,
\[
 \EE_{\rr}[\alpha(A,C)|C] = \EE_{\rr}[\beta(B,C)|C] = 1.
\]

Moreover $\rr$-almost everywhere on $\{ \gamma(C) = 0 \}$, $D = 0$, so the identities
\begin{gather*}
\EE_{\rr}[D|A,C] = \alpha(A,C) \gamma(C), \\
\EE_{\rr}[D|B,C] = \beta(B,C) \gamma(C)
\end{gather*}
hold $\rr$-almost everywhere.

It remains to show
\begin{equation}
\label{eq:D=abc}
D = \alpha (A,C) \beta (B,C) \gamma(C).
\end{equation}

Let $u$ be a nonnegative measurable function. Let us compute $\EE_{\pp}[u(A)|C]$. For all measurable and nonnegative function $w$, we have on the one hand:
\begin{align*}
E_{\pp}[u(A) w(C)] &= \EE_{\pp}[\EE_{\pp}[u(A)|C] w(C)] \\
&= \EE_{\rr}[D \EE_{\pp}[u(A)|C] w(C)]\\
&= \EE_{\rr}[\EE_{\rr}[D|C] \: \EE_{\pp}[u(A)|C] w(C)]\\
&= \EE_{\rr}[\gamma(C) \EE_{\pp}[u(A)|C] w(C)],
\end{align*}
and on the other hand:
\begin{align*}
\EE_{\pp}[u(A) w(C)] &= \EE_{\rr}[Du(A) w(C)] \\
&= \EE_{\rr}[\EE_{\rr}[Du(A)|C] w(C)]\\
&= \EE_{\rr}[\EE_{\rr}[\EE_{\rr}[D|A,C] u(A)|C] w(C)]\\
&= \EE_{\rr}[\alpha(A,C) u(A)|C] \gamma(C) w(C)]
\end{align*}
So we have $\rr$-almost everywhere:
\[
\gamma(C) \EE_{\pp}[u(A)|C] = \gamma(C) \EE_{\rr}[\alpha(A,C) u(A)|C],
\]
and $\pp$-almost everywhere:
\[
\EE_{\pp}[u(A)|C] = \EE_{\rr}[\alpha(A,C) u(A)|C].
\]
In the same way, for all measurable and nonnegative function $v$, $\pp$-almost everywhere:
\[
\EE_{\pp}[v(B)|C] = \EE_{\rr}[\beta(B,C) v(B)|C].
\]
Using the fact that the $\sigma$-algebra on $\mathcal{Z}$ is $\sigma(A,B,C)$, to get~\eqref{eq:D=abc}, it suffices to show that for all $u$, $v$ and $w$ measurable and nonnegative:
\[
\EE_{\pp}[u(A)v(B)w(C)] = \EE_{\rr} [\alpha(A,C) \beta(B,C) \gamma(C) u(A) v(B) w(C)].
\]
So let us take $u$, $v$ and $w$ such functions. Because of the independence assumption,
\begin{align*}
\EE_{\pp}[u(A)v(B)w(C)] &= \EE_{\pp}[ \EE_{\pp}[u(A) v(B) | C]w(C)] \\
&= \EE_{\pp}[ \EE_{\pp}[u(A)|C] \: \EE_{\pp}[ v(B) | C] w(C)].
\end{align*}
And using the formulas computed just before,
\[
\EE_{\pp}[u(A)v(B)w(C)] = \EE_{\pp}[ \EE_{\rr}[\alpha(A,C) u(A)|C] \: \EE_{\rr}[\beta(B,C) v(B)|C] w(C)].
\]
Now, because of Remark \ref{dependanceonC},
\begin{align*}
\EE_{\pp}[u(A)v(B)w(C)] &= \EE_{\pp}[ \EE_{\rr}[\alpha(A,C) u(A) \beta(B,C) v(B)|C] w(C)] \\
&=\EE_{\rr}[D\EE_{\rr}[\alpha(A,C) u(A) \beta(B,C) v(B)|C] w(C)] \\
&=\EE_{\rr}[\gamma(C)\EE_{\rr}[\alpha(A,C) u(A) \beta(B,C) v(B)|C] w(C)]\\
&=\EE_{\rr} [\alpha(A,C) \beta(B,C) \gamma(C) u(A) v(B) w(C)],
\end{align*}
and the result follows.

The fact that \eqref{eq-01P} and \eqref{defalphabeta} imply \eqref{eq:expression_abc} follows from conditioning \eqref{eq-01P} with respect to $C$, $(A,C)$ and $(B,C)$ respectively, and from using the independence property of $\rr$.

For the second point, we suppose that there are $\alpha$ and $\beta$ such that~\eqref{eq:RN_ab} holds, and we want to show that for all $u$ and $v$ measurable and nonnegative, we have
\begin{equation*}
 \EE_{\pp}[u(A) v (B) | C] = \EE_{\pp}[u(A) | C] \: \EE_{\pp}[v(B)|C], \quad \pp\mbox{-a.e}.
\end{equation*}

This follows from the following, obtained using the same kind of computations as before:
 \begin{gather*}
 \pp\ae,\qquad \EE_{\rr}[\alpha(A,C)|C]>0 \quad \mbox{and}\quad \EE_{\rr}[\beta(B,C)|C],\\
 \EE_{\pp}[u(A)v(B)|C] = \frac{\EE_{\rr}[u(A) \alpha(A,C)|C]\EE_{\rr}[v(B) \beta(B,C)|C]}{\EE_{\rr}[\alpha(A,C)|C]\EE_{\rr}[\beta(B,C)|C]}.  
 \end{gather*}
 It remains to apply this formula first for $v=1$, then for $u=1$, and finally for general $u$ and $v$.
\end{proof}

\subsection*{Density of a Markov process}
We are now ready to give the main result on the form of the Radon-Nikodym derivative between two laws of Markov processes.

\begin{theorem}\label{res-02}
Let $R$ be a reference Markov measure  and let  $P\ll R$  a probability measure  dominated by $R$ with finite entropy. The three following assertions are equivalent.
\begin{enumerate}
\item \label{markov} The measure $P$ is Markov;
\item \label{additivefunctionnal} There is an additive functional $A$ such that
\[
\frac{\D P}{\D R} = \exp\Big(A\big([0,1]\big)\Big);
\]
\item \label{radditivefunctionnal} There is a regular additive functional $A$ such that
\[
\frac{\D P}{\D R} = \exp\Big(A\big([0,1]\big)\Big).
\]
\end{enumerate}
\end{theorem}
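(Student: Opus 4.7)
The implication \eqref{radditivefunctionnal}$\Rightarrow$\eqref{additivefunctionnal} holds by definition. For \eqref{additivefunctionnal}$\Rightarrow$\eqref{markov}, fix $t\in(0,1)$ and use additivity of the content to split
\[
\frac{\D P}{\D R} = \exp\bigl(A([0,t])\bigr)\,\exp\bigl(A((t,1])\bigr);
\]
by Definition~\ref{def:additive_functional} these two factors are respectively $\sigma(X_{[0,t]})$- and $\sigma(X_{[t,1]})$-measurable. The second assertion of Lemma~\ref{res-01} applied with $\rr=R$, $\pp=P$ and the triple $(X_{[0,t]},X_{[t,1]},X_t)$ then yields, using the Markov property of $R$ to check Assumptions~\ref{ass:framework}, the conditional independence of past and future given $X_t$ under $P$. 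Since $t$ is arbitrary, $P$ is Markov.

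The main direction is \eqref{markov}$\Rightarrow$\eqref{radditivefunctionnal}. Write $D:=\D P/\D R$ and, for $0\le s\le t\le 1$, set
\[
\mu_{s,t} := \frac{\D P_{[s,t]}}{\D R_{[s,t]}},\qquad f_t := \frac{\D P_t}{\D R_t},
\]
which are well defined since $P\ll R$; the standard identification (see Appendix~\ref{app:CT}) gives $\mu_{s,t}(X_{[s,t]})=\EE_R[D\mid X_{[s,t]}]$ almost surely. Fix $s<u<t$. Both $R_{[s,t]}$ and $P_{[s,t]}$ are Markov, so under each of them $X_{[s,u]}$ and $X_{[u,t]}$ are independent conditionally on $X_u$. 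The first part of Lemma~\ref{res-01}, applied to $P_{[s,t]}\ll R_{[s,t]}$ with the triple $(X_{[s,u]},X_{[u,t]},X_u)$, together with the explicit formulas~\eqref{eq:expression_abc}, yields $\gamma(X_u)=f_u(X_u)$, $\alpha(X_{[s,u]})f_u(X_u)=\mu_{s,u}(X_{[s,u]})$ and $\beta(X_{[u,t]})f_u(X_u)=\mu_{u,t}(X_{[u,t]})$, hence the multiplicative cocycle identity
\[
\mu_{s,t}(X_{[s,t]})\,f_u(X_u)=\mu_{s,u}(X_{[s,u]})\,\mu_{u,t}(X_{[u,t]}), \qquad R\text{-a.s.}
\]

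Taking logarithms (with the convention $\log 0=-\infty$) and setting $\nu([s,t]):=\log\mu_{s,t}(X_{[s,t]})$, so in particular $\nu(\{s\})=\log f_s(X_s)$, a direct check using the cocycle identity verifies the consistency relation of Remark~\ref{contentonclosedintervals}. Thus $\nu$ extends uniquely to a content $A$ on $\mathcal{I}$, and by construction each $A(I)$ is $\sigma(X_t,t\in I)$-measurable, so $A$ is an additive functional. Specializing to $[s,t]=[0,1]$ yields $\D P/\D R=\exp(A([0,1]))$. For regularity, observe that $\bigl(\mu_{s,t}(X_{[s,t]})\bigr)_{s\le t}$ is a two-index $R$-martingale along the directed poset of nested intervals, with $\EE_R[\mu_{s,t}(X_{[s,t]})\mid X_{[s',t']}]=\mu_{s',t'}(X_{[s',t']})$ whenever $[s',t']\subset[s,t]$. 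Bakry's regularity theorem~\cite{bakry1979regularite}, recalled in Appendix~\ref{regularitymartingale}, then provides, on an $R$-full set, the limits of $\mu_{\sigma,\tau}$ as $(\sigma,\tau)\to(s,t)$ from the inside and from the outside along rationals; passing to logarithms produces the limits prescribed by Definition~\ref{def:regular_content}.

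The main obstacle I anticipate is the bookkeeping of $R$-null sets: the multiplicative identity and the martingale convergence hold only up to exceptional sets that depend a priori on the indices involved, and to produce a genuine mapping $\OO\to\mathfrak{C}([0,1])$ one must arrange all these exceptional sets to fit inside a common $R$-null set while preserving the $\sigma(X_t,t\in I)$-measurability of $A(I)$. The finiteness of $H(P|R)$ is what supplies the integrability required by Bakry's theorem and rules out explosions of the directional limits, ultimately placing $A$ in the regular class $\mathfrak{C}_r([0,1])$.
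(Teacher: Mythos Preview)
Your proposal is correct and matches the paper's argument: the same use of Lemma~\ref{res-01} in both directions, the logarithmic content built from $\log \EE_R[D\mid X_{[s,t]}]$, and Bakry's two-parameter regularity applied to the closed $L\log L$ martingale (this is exactly where finite entropy enters). The null-set obstacle you flag is handled in the paper by applying Bakry's modification \emph{before} checking additivity---so that $A([s,t])$ is defined for every $\omega$ and every $s\le t$---then verifying the relation of Remark~\ref{contentonclosedintervals} on rational indices (a single null set) and extending to all indices via the regularity just obtained, finally setting $A\equiv 0$ on the residual null set; the paper also conditions on $C=X_{[u,v]}$ rather than your $C=X_u$, obtaining the four-point relation directly, but your three-point cocycle yields it immediately, so this is immaterial.
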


\begin{proof}
We show \eqref{radditivefunctionnal} $\Rightarrow$ \eqref{additivefunctionnal} $\Rightarrow$ \eqref{markov} $\Rightarrow$ \eqref{radditivefunctionnal}.

\noindent$\bullet$\ 
The implication \eqref{radditivefunctionnal} $\Rightarrow$ \eqref{additivefunctionnal} is obvious.

\noindent$\bullet$\ 
For \eqref{additivefunctionnal} $\Rightarrow$ \eqref{markov}, take $t \in [0,1]$. Then
\[
\frac{\D P}{\D R} = \exp\Big( A\big( [0,t] \big) \Big) \exp\Big( A\big( (t, 1] \big) \Big).
\]
Because of the measurability property of an additive functional, $ \exp\big( A\big( [0,t] \big) \big)$ is a function of $X_{[0,t]}$ and $\exp\big( A\big( (t, 1] \big) \big)$ is a function of $X_{[t,1]}$. We can use the second point of Lemma~\ref{res-01} with $A = X_{[0,t]}$, $B = X_{[t,1]}$ and $C = X_t$ to deduce that under $P$, $X_{[0,t]}$ and $X_{[t,1]}$ are independent conditionally on $X_t$. Because this is true for every $t$, $P$ is Markov.

\noindent$\bullet$\ 
To prove \eqref{markov} $\Rightarrow$ \eqref{radditivefunctionnal}, set for every $s \leq t$ in $[0,1]$
\[
D_{s,t} := \EE_R \bigg[ \frac{\D P}{\D R} \bigg| X_{[s,t]} \bigg],
\]
which are obviously $\sigma(X_{[s,t]})$-measurable. In Appendix~\ref{regularitymartingale}, we show how a result from~\cite{bakry1979regularite} can be used to show that up to a modification, 
\begin{gather*}
\forall \, 0 \leq s \leq t \leq 1, \, (s,t) \neq (0,1), \quad \lim_{\substack{(\sigma, \tau) \to (s,t) \\
\sigma \leq s, \, \tau \geq t}} D_{\sigma, \tau} = D_{s,t},\\
\forall \, 0 \leq s < t \leq 1, \quad \lim_{\substack{(\sigma, \tau) \to (s,t) \\
\sigma > s, \, \tau < t}} D_{\sigma, \tau} \mbox{ exists},
\end{gather*}
see Figure~\ref{regularityD}.
\\
We now define for all $0 \leq s \leq t \leq 1$
\[
A([s,t]) := \log D_{s,t} \in [- \infty, + \infty).
\]
It is well defined for all $\omega$ and all $0 \leq s \leq t \leq 1$, and it has the two regularity properties
\begin{gather*}
\forall \, 0 \leq s \leq t \leq 1, \, (s,t) \neq (0,1), \quad \lim_{\substack{(\sigma, \tau) \to (s,t) \\
\sigma \leq s, \, \tau \geq t}}A([\sigma, \tau]) = A([s,t]),\\
\forall \, 0 \leq s < t \leq 1, \quad \lim_{\substack{(\sigma, \tau) \to (s,t) \\
\sigma > s, \, \tau < t}} A([\sigma, \tau]) \mbox{ exists}.
\end{gather*}
 Let us follow Remark~\ref{contentonclosedintervals}. Let $s \leq u \leq v \leq t$ in $[0,1]$ and apply the first point of Lemma~\ref{res-01} with $\rr = R_{[s,t]}$, $\pp = P_{[s,t]}$ (these measures are still Markov), $A = X_{[s,v]}$, $B = X_{[u,t]}$ and $C = X_{[u,v]}$. We end up with three nonnegative measurable functions $\alpha$, $\beta$ and $\gamma$ such that $R$-almost everywhere,
\begin{gather*}
\exp\Big( A \big( [s,t] \big) \Big) =  \frac{\D P_{[s,t]}}{\D R_{[s,t]}} = \alpha(X_{[s,v]}) \times \beta(X_{[u,t]}) \times \gamma(X_{[u,v]}),\\
\exp\Big( A \big( [s,v] \big) \Big) =  \frac{\D P_{[s,v]}}{\D R_{[s,v]}} = \alpha(X_{[s,v]}) \times \gamma(X_{[u,v]}),\\
\exp\Big( A \big( [u,t] \big) \Big) =  \frac{\D P_{[u,t]}}{\D R_{[u,t]}} = \beta(X_{[u,t]}) \times \gamma(X_{[u,v]}),\\
\exp\Big( A \big( [u,v] \big) \Big) =\frac{\D P_{[u,v]}}{\D R_{[u,v]}} = \gamma(X_{[u,v]}).
\end{gather*}
We easily conclude that $R$-almost everywhere:
\[
 A \big( [u,v] \big) = - \infty \quad \Rightarrow \quad  A \big( [s,t] \big) =  A \big( [s,v] \big) =  A \big( [u,t] \big) = -\infty,
\]
and that if it is not the case:
\[
 A \big( [s,t] \big) =  A \big( [s,v] \big) +  A \big( [u,t] \big) -  A \big( [u,v] \big).
\]
The measurability property for the intervals that are not of the form $[s,t]$ comes from the fact that $\sigma(X_{[s,t]}) = \sigma(X_{(s,t)})$, so that for example, $A\big( (s,t) \big) = A([s,t]) - A(\{ s \}) - A(\{ t \})$ is $\sigma(X_{[s,t]})$-measurable and so $\sigma(X_{(s,t)})$-measurable. Finally, the measurability property for all sets of $\mathcal{I}$ is obtained by the additivity property of $A$.
\\
The only remaining  thing  to show is that this additivity property is valid $R$-almost everywhere for all $0 \leq s \leq u \leq v \leq t \leq 1$, and not only for all $0 \leq s \leq u \leq v \leq t \leq 1$, $R$-almost everywhere. But it is true $R$-almost everywhere for all $s \leq u \leq v \leq t$ in a countable dense subset of $[0,1]$ and it is easy to pass to the limit thanks to the regularity property of $A$. We set $A \equiv 0$ on the $\omega$'s for which the additivity does not hold for all $0 \leq s \leq u \leq v \leq t \leq 1 $, so that the property is satisfied for all $\omega$.
\end{proof}
\begin{remark} 
This decomposition is far from being unique. To illustrate this, take $( \varphi_t)_{t \in [0,1]}$ and $( \psi_t)_{t \in [0,1]}$ two families of measurable functions, such that $ \varphi_1 = \psi_0 = 0$ and such that $R$-almost everywhere, $t \mapsto \varphi_t(X_t)$ is right continuous, left limited and $s \mapsto  \psi_s(X_s)$ is left continuous, right limited. Then, if $A$ is an additive functional one can define $B$ on the closed intervals of $[0,1]$ by:
\[
B([s,t]) = A([s,t]) + \varphi_t(X_t) + \psi_s(X_s).
\]
It is easy to check with the help of Remark~\ref{contentonclosedintervals} that $B$ can be extended in a regular additive functional, with:
\[
A([0,1]) = B([0,1]).
\]
\end{remark}

\section{Writing a function as a sum}
\label{sec-sum}

In this section, we give a framework in which a function of two variables $f = f(x,y)$ can be decomposed as a sum $f(x,y) = \alpha(x) + \beta(y)$. We also provide counter-examples when only a part of the assumptions is satisfied.

The general framework is the following. Suppose that for a path measure $R \in \MO$, the following holds:

\begin{equation}
\label{eq-19}
f(X_s,X_t)= a(X _{ [s,u ]}) + b( X _{ [u ,t]}),
\quad R\ae
\end{equation}
where $0 \leq s < u < t \leq 1$ and $f$, $a$ and $b$ are $[ - \infty, \infty)$-valued measurable functions. This means that a function only depending on the position of the canonical process at times $s$ and $t$ can be decomposed as a sum of a function only depending on the \emph{beginning} of the trajectory and a function only depending on the \emph{end} of the trajectory. The question is to know if it is possible to find two measurable functions  $f_s$ and $f_t$ such that:
  \begin{equation}
\label{eq-08}
f(X_s,X_t)=f_s(X_s)+f_t(X_t),
\quad R\ae
\end{equation}
We will provide counter-examples to show that it is not always the case: it is necessary to make some assumptions on $R$ to be able to conclude. As it will be revealed by the counter-examples,  two types of assumptions are needed.
\begin{enumerate}[1.]
	\item \label{ass:formal_irreducibility} The first assumption is of irreducibility type: there is a non-negligible set of positions $y \in \XX$ such that:
	\begin{equation}
	\label{eq:evbd_goes_to_y}
	R((X_s,X_t)\in\sbt) \ll  R((X_s,X_t)\in\sbt|X_u=y),
	\end{equation}
where $0\le s\le u\le t\le 1.$
	Roughly speaking, for such $y$, if some trajectories of the process go from $X_s = x$ to $X_t = z$, then there are also trajectories such that $X_s = x$ and $X_t = z$ with the additional property that $X_u = y$.
	\item \label{ass:formal_independence} The second assumption concerns the independence properties of $R$ at time $u$: for the positions $y$ satisfying the first assumption it is possible to find for each $x \in \XX$ a trajectory $\alpha^x$ on the set of times $[s,u]$ joining $x$ to $y$, and for each $z \in \XX$ a trajectory $\beta^z$ on the set of times $[u,t]$ joining $y$ to $z$, such that:\footnote{Here and in the following, if $\rr$ is a measure on a Polish space and $X, Y$ are random variables defined on this Polish space, we denote by $\rr(X \in \bullet | Y=y)$ the law of $X$ knowing $Y=y$ under $\rr$, when it is well defined.}
	\begin{align*}
	R(X _{ [s,u]}\in\sbt| X_u= y)
	&\ll R(X _{ [s,u]}\in\sbt| X _{ [u,t]}= \beta^{X_t})\quad R\ae \\
	R(X _{ [u,t]}\in\sbt| X_u= y)
	&\ll R(X _{ [u,t]}\in\sbt| X _{ [s,u]}= \alpha^{X_s})\quad R\ae
	\end{align*}
	In other terms, any \emph{beginning} of trajectory followed by the process can be extended by the ends of trajectory $\beta$, and any \emph{end} of trajectories followed by the process can be extended by the beginnings of trajectories $\alpha$
\end{enumerate}
Under these two assumptions, it suffices to choose:
\begin{equation*}
f_s(X_s) := a(\alpha^{X_s}) \qquad \mbox{and} \qquad f_t(X_t) := b(\beta^{X_t}),
\end{equation*}
in \eqref{eq-08}.
However, the difficulty is in general to find situations where the trajectories $\alpha^x$ and $\beta^z$ can be built as measurable functions of $x$ and $z$ respectively, and to deal with the negligible sets. It is possible to state very general assumptions under which a measurable selection theorem~\cite{wagner1977survey} allows us to achieve these goals (this implies  using the axiom of choice). We shall rather follow another path by assuming that $R$ is reciprocal (instead of item~\ref{ass:formal_independence}.), and that it satisfies an additional irreducibility property (see Assumption~\ref{ass:irreducibility} below, instead of item~\ref{ass:formal_irreducibility}.)  Under these  requirements,  assumptions \ref{ass:formal_irreducibility}.\ and \ref{ass:formal_independence}.\  hold for any $0\leq s < u < t \leq 1$ and for any $y \in \XX$, and the axiom of choice is not  necessary.

\subsection*{Counterexamples}
 Let us provide two counter-examples when assumptions~\ref{ass:formal_irreducibility}.\ and~\ref{ass:formal_independence}.\ above are not fulfilled.
\begin{enumerate}[(a)]
	\item
	\emph{A path measure $R$ not satisfying the assumption \ref{ass:formal_independence}}.
	\begin{figure}
		\includegraphics{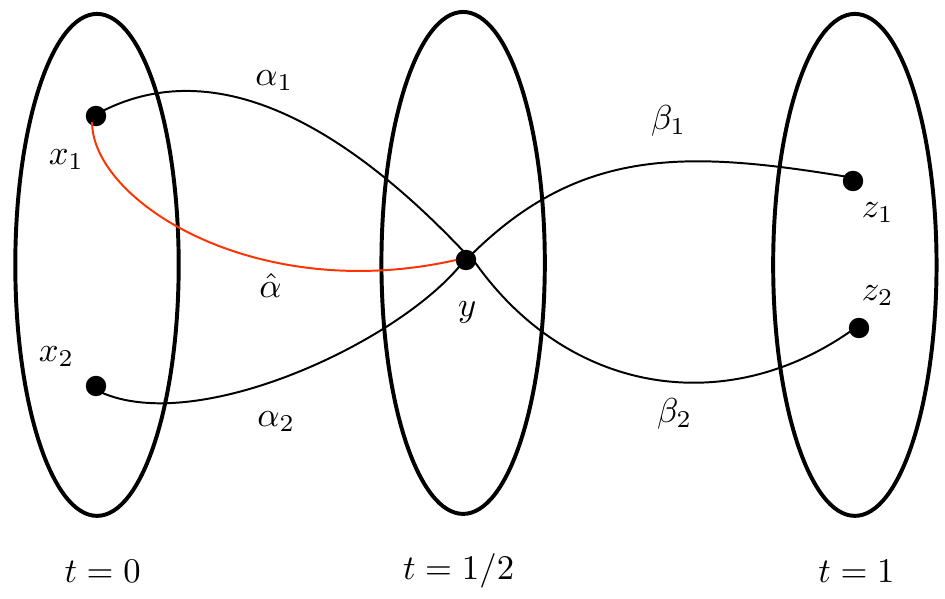}
		\caption{A counter-example when assumption~\ref{ass:formal_independence}. is dropped}\label{fig-02}
	\end{figure}
	We consider the simple setting depicted at Figure \ref{fig-02}, where only four paths are allowed: the path measure  $R$ is  uniform  on $ \left\{ \alpha_1 \beta_2, \hat \alpha \beta_1, \alpha_2 \beta_1, \alpha_2 \beta_2\right\} $ with obvious notation, and it is assumed that $x_1\not=x_2$, $z_1\not= z_2$ and $ \hat \alpha\not = \alpha_1$.
	Clearly the assumption~\ref{ass:formal_independence} is not satisfied. In particular $R$ fails to be Markov or reciprocal. We exhibit a function $f$ satisfying \eqref{eq-19} but not \eqref{eq-08}.
	\\
	The function $f(X_0,X_1)$ is specified by:
	$$f(x_1,z_1)=1\quad \textrm{and} \quad f(x_1,z_2)=f(x_2,z_1)=f(x_2,z_2)=0,$$ while the functions $a$ and $b$ are given by:  
	$$a( \hat\alpha)=1,\quad a( \alpha_1)= a( \alpha_2)=0, \quad \textrm{and} \quad b( \beta_1)=b( \beta_2)=0.$$
	Obviously, we have: $f(X_0,X_1)=a(X _{ [0,1/2]}) +b( X _{ [1/2,1]}),\ R\ae$, but $f$ fails to satisfy
	\begin{align*}
	f(X_0,X_1)=f_0(X_0)+f_1(X_1),
	\quad R\ae
	\end{align*}
	for some functions $f_0, f_1$ since this would imply that
	\begin{align*}
	f(x_1,z_1)&=f_0(x_1)+f_1(z_1)
	=f(x_1,z_2)-f_1(z_2)+f(x_2,z_1)-f_0(x_2)\\
	&=f(x_1,z_2)+f(x_2,z_1)-f(x_2,z_2)=0,
	\end{align*}
	a contradiction.
	
	\item
	\emph{A Markov measure $R$ not satisfying the assumption \ref{ass:formal_irreducibility}}.
	We consider the simple setting depicted at Figure \ref{fig-03} where all the drawn paths from left to right are  allowed. Note that  {$R$ can  be chosen as a Markov measure}. For instance with $R( \alpha_i \beta_j)= p_i q_j$ where $p_1=p_2=p_3=p_4=1/4,$ $q_1=1$ and $q_2=q_3=1/2.$ 
	It is assumed that all the states $x_1,\dots, z_3$ are distinct.  Clearly the assumption \ref{ass:formal_irreducibility} is not satisfied. Again, we exhibit a function $f$ satisfying \eqref{eq-19} but not \eqref{eq-08}.
	\\
	The function $f(X_0,X_1)$ is specified by $f(x_i,z_j)=f _{ ij}$ with
	$$
	f _{ 11}=2, f _{ 12}=2, f _{ 13}=3, f _{ 21}=2, f _{ 22}=3, f _{ 23}=4.
	$$   
	We see that $f( \alpha_i \beta_j)= a( \alpha_i) + b( \beta_j)$ where the functions $a$ and $b$ are given by:  
	$$a( \alpha_1)=1, a( \alpha_2)=0, a( \alpha_3)=1, a( \alpha_4)=1,   \quad \textrm{and} \quad b( \beta_1)=1,b( \beta_2)=2, b( \beta_3)=3.$$
	Denote $f_0(x_1)=p, f_0(x_2)=q, f_1(z_1)= r,f_1(z_2)=s,f_1(z_3)=t$ and suppose that \eqref{eq-08} holds, that is
	$$
	p+r=2,\quad q+r=2,\quad p+s=2,\quad q+s=3,\quad p+t=3,\quad q+t=4.
	$$
	The first two equations imply $p=q$. Plugging this into the third and fourth ones, leads to $2=3.$ 
	
	\begin{figure}
		\includegraphics{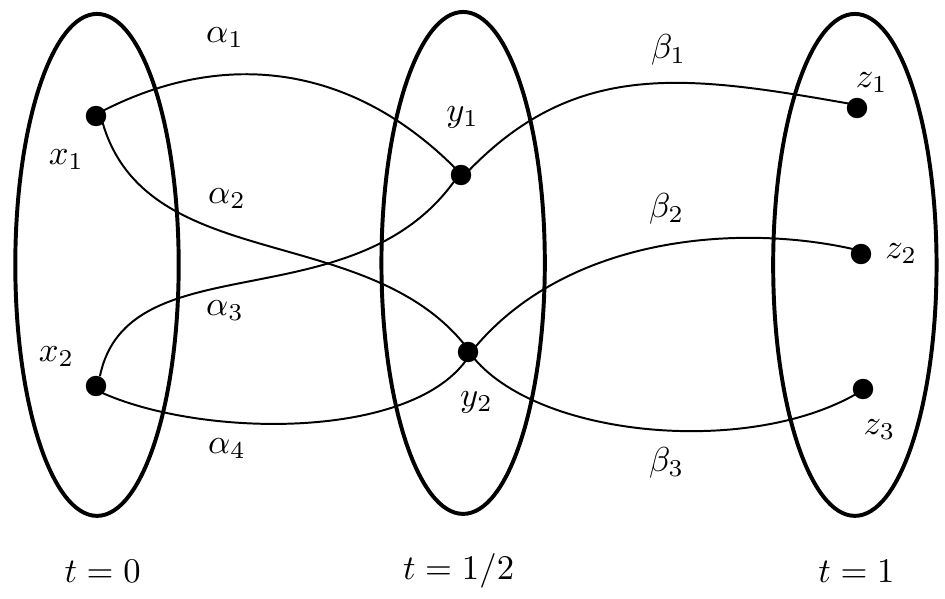}
		\caption{Another counter-example when assumption~\ref{ass:formal_irreducibility}. is dropped}\label{fig-03}
	\end{figure}
\end{enumerate}

The Assumptions \ref{ass:irreducibility} below are sufficient to prove that~\eqref{eq-19} implies~\eqref{eq-08}. They require the notion of reciprocal measure.

\subsection*{Reciprocal path measures}
Because of the nature of the minimization problems we  study, some of the processes are not  Markov in general. Still, they  satisfy some weaker independence properties, as they are reciprocal in the sense of the following definition:
\begin{definition}[Reciprocal measure] \label{def:reciprocal}
	A path measure  $Q$ on  $\Omega$ is called \emph{reciprocal} if it is conditionable and if it satisfies one of the two following equivalent assertions:
	\begin{itemize}
		\item For any times  $0< s< u< 1$  and  any events  $A\in \sigma(X_{[0,s]},X_{[u,1]})$ and $B\in \sigma(X_{[s,u]})$:
		\begin{equation}
		\label{eq:def_reciprocal}
		Q(A\cap B \mid X_s, X_u)=Q(A\mid X_s, X_u)Q(B\mid X_s, X_u)\quad  Q\ae . 
		\end{equation}
		\item For any times  $0< s< u< 1$  and  any events  $A\in \sigma(X_{[0,s]}) , B\in \sigma(X_{[s,u]}), C \in \sigma(X_{[u,1]}) $:
		\begin{equation}\label{eq-07}
		Q(A\cap B\cap C \mid X_s, X_u)=Q(A \cap C\mid X_s, X_u)Q(B\mid X_s, X_u)\quad  Q\ae  .
		\end{equation}
	\end{itemize}
\end{definition}

These properties  state that under $Q$, 
given the knowledge of the canonical process at  both times  $s$ and
$u$, the events \emph{inside} $[s,u]$ and those \emph{outside} $(s,u)$  are conditionally independent.
It is clearly  time-symmetric.

\begin{remarks} We recall basic relations between the Markov and reciprocal properties.
	\begin{enumerate}[(a)]
		\item
		\emph{Any Markov measure is reciprocal}. Indeed, let $Q$ be a Markov measure. Then,  for any $s,u,A,B$ and $C$ as in the second point of Definition~\ref{def:reciprocal}, 
		\begin{align*}
		&Q(A\cap B\cap C\mid X_s,X_u)=Q ^{ X_u}(A\cap B\cap C\mid X_s)
		=Q ^{ X_u}(A\mid X_s)Q ^{ X_u}(B\cap C\mid X_s)\\
		&\ =Q ^{ X_u}(A\mid X_s)Q ^{ X_s}(B\cap C\mid X_u)
		=Q ^{ X_u}(A\mid X_s)Q ^{ X_s}(B\mid X_u)Q ^{ X_s}(C\mid X_u)\\
		&\quad =Q ^{ X_u}(A\mid X_s)Q ^{ X_u}(C\mid X_s)Q ^{ X_s}(B\mid X_u)
		= Q(A\cap C\mid X_s,X_u)Q(B\mid X_s,X_u).
		\end{align*} 
		\item
		\emph{Conditionally to the initial value or the final value or both of them, a reciprocal measure is Markov}. Considering $s=0$ and $A=\OO$ in the definition of the reciprocal property~\eqref{eq-07}, we see that for all  $0\le u\le 1,$ $B\in \sigma(X _{ [0,u]})$ and $C\in \sigma (X _{ [u,1]}),$
		\begin{equation*}
		Q(B\cap C \mid X_0, X_u)=Q(B\mid X_0, X_u)Q(C\mid X_0, X_u)\quad  Q\ae  
		\end{equation*}
		This means that \emph{for any reciprocal measure $Q,$ the conditional path measure $Q ^{ X_0}:=Q(\sbt\mid X_0)$ is Markov, $Q\ae$}
		Similarly, with $u=1$ and $C=\OO$ in \eqref{eq-07}, we see that for all  $0\le s\le 1,$ $A\in \sigma(X _{ [0,s]})$ and $B\in \sigma (X _{ [s,1]}),$
		\begin{equation*}
		Q(A\cap B \mid X_s, X_1)=Q(A\mid X_s, X_1)Q(B\mid X_s, X_1)\quad  Q\ae  
		\end{equation*}
		meaning that \emph{the conditional path measure $Q ^{ X_1}:=Q(\sbt\mid X_1)$ is Markov, $Q\ae$}
		
		Because a Markov measure conditioned to its final (or initial) position is still Markov, one gets that if $Q$ is reciprocal, $Q^{X_0, X_1}$ is Markov $Q\ae$
	\end{enumerate}
\end{remarks}

\subsection*{Irreducible reciprocal measure}
We are ready to state our main assumption.

\begin{assumption}
	\label{ass:irreducibility}
Let $R \in \MO$ be reciprocal. We say it is irreducible if for all $0 \leq s < u < t \leq 1$, we have:
\begin{equation}
\label{eq:irreducibility}
R_s \otimes R_u \otimes R_t \ll R_{s,u,t}\ll R_s \otimes R_u \otimes R_t.
\end{equation}
(Recall that $R_{t_1, \dots, t_k}$ stands for the law of $(X_{t_1}, \dots, X_{t_k})$ under $R$.) In other terms, the laws $R_{s,u,t}$ and $R_s \otimes R_u \otimes R_t$ are equivalent in the sense of measure theory.
\end{assumption}
Let us prove that Assumption~\ref{ass:irreducibility} is sufficient to pass from~\eqref{eq-19} to~\eqref{eq-08}. We will then discuss a little bit further Assumption~\ref{ass:irreducibility}: we will see how it can be stated when $R$ is Markov, and how~\eqref{eq:irreducibility} tensorizes.

\begin{lemma}
	\label{lem:sum}
	Let $R \in \MO$ be reciprocal and satisfy Assumption~\ref{ass:irreducibility}. Suppose \eqref{eq-19} holds for some $0\leq s < u < t \leq 1$, and measurable $f$, $a$ and $b$. Then there exist measurable maps $f_s$ and $f_t$ such that~\eqref{eq-08} holds.
\end{lemma}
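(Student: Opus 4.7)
The plan is to condition \eqref{eq-19} on $(X_s,X_u,X_t)$, use the reciprocal property to decouple $a(X_{[s,u]})$ and $b(X_{[u,t]})$ into pointwise functions of $(X_s,X_u)$ and $(X_u,X_t)$ respectively, and then freeze $X_u$ at a single well-chosen value thanks to Assumption~\ref{ass:irreducibility}.

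First, I would extract from the reciprocal identity~\eqref{eq:def_reciprocal} applied at times $s<u$ with events in $\sigma(X_{[u,1]})\supseteq \sigma(X_{[u,t]},X_t)$ the following two facts under $R$: (i) conditionally on $(X_s,X_u,X_t)$, the segments $X_{[s,u]}$ and $X_{[u,t]}$ are independent (use the classical lemma that $X\perp (Y_1,Y_2)\mid Z$ implies $X\perp Y_1\mid (Z,Y_2)$); (ii) the conditional law of $X_{[s,u]}$ given $(X_s,X_u,X_t)$ coincides with its conditional law given $(X_s,X_u)$ alone, and symmetrically for $X_{[u,t]}$ (via the analogous identity applied at times $u<t$).

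Second, I would condition \eqref{eq-19} on $(X_s,X_u,X_t)=(x,y,z)$: the constant $f(x,z)$ then equals the sum of two conditionally independent $[-\infty,\infty)$-valued random variables $a(X_{[s,u]})$ and $b(X_{[u,t]})$, so each summand is itself almost surely constant. (When $f(x,z)=-\infty$, independence forces one of the two to be $-\infty$ a.s.; when $f(x,z)\in\mathbb{R}$, both are finite and each must be constant.) By (ii), the a.s.\ value of $a(X_{[s,u]})$ under this conditional measure depends measurably only on $(x,y)$; call it $\bar a(x,y)$. Likewise $b(X_{[u,t]})=\bar b(y,z)$ a.s.\ for some measurable $\bar b$. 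Thus
\[
f(x,z)=\bar a(x,y)+\bar b(y,z),\qquad R_{s,u,t}\ae
\]

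Third, Assumption~\ref{ass:irreducibility} upgrades this identity to $R_s\otimes R_u\otimes R_t$-a.e. Assuming $R\neq 0$ (the zero case being trivial), $R_u$ is a nonzero $\sigma$-finite measure, so Fubini produces a point $y_0\in\XX$ such that $f(x,z)=\bar a(x,y_0)+\bar b(y_0,z)$ for $R_s\otimes R_t$-a.e.\ $(x,z)$. Setting $f_s(x):=\bar a(x,y_0)$ and $f_t(z):=\bar b(y_0,z)$ yields measurable functions satisfying \eqref{eq-08}, first $R_s\otimes R_t$-a.e.\ and then $R_{s,t}$-a.e.\ (hence $R$-a.e.), since $R_{s,t}\ll R_s\otimes R_t$ follows by marginalizing the inclusion $R_{s,u,t}\ll R_s\otimes R_u\otimes R_t$ through Fubini.

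The main obstacle I expect is Step~2: producing $\bar a(x,y)$ and $\bar b(y,z)$ as \emph{measurable} functions while properly accommodating the value $-\infty$ (where $a(X_{[s,u]})$ need not be integrable, so a naive conditional expectation is not available). A clean route is to define $\bar a(x,y)$ as the essential supremum of $a(X_{[s,u]})$ under $R(\cdot\mid X_s=x,X_u=y)$, or equivalently via the conditional expectation of a bounded bijective transform of $a$ such as $\arctan$, inverted back; either way the measurability in $(x,y)$ of the resulting function follows from the disintegration theorem. Once this technicality is settled, Steps~1 and~3 combine mechanically to give the decomposition.
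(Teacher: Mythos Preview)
Your argument is correct and follows the same overall strategy as the paper: condition \eqref{eq-19} on $(X_s,X_u,X_t)$, use reciprocity to reduce the dependence of $a$ and $b$ to $(X_s,X_u)$ and $(X_u,X_t)$ respectively, and then freeze $X_u$ at a good point via irreducibility.

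The differences are in execution. The paper skips your constancy step entirely: it simply writes
\[
f(X_s,X_t)=\EE_R[a(X_{[s,u]})\mid X_s,X_u]+\EE_R[b(X_{[u,t]})\mid X_u,X_t],\qquad R\ae,
\]
using only your fact (ii); your conditional-independence fact (i) and the argument that each summand is a.s.\ constant are not needed. Your detour through constancy is valid (and arguably more honest about the $-\infty$/non-integrability issue, which the paper's conditional expectations also leave implicit), but it costs you the extra measurability discussion you flag at the end. For the freezing step, the paper derives $R_{s,t}\ll R((X_s,X_t)\in\sbt\mid X_u=y)$ directly from Assumption~\ref{ass:irreducibility} (via the conditioning lemmas of Appendix~\ref{app:CT}) and then picks $y$, whereas you pass to $R_s\otimes R_u\otimes R_t$-a.e.\ and use Fubini; the two routes are equivalent manipulations of the same equivalence of measures.
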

\begin{proof}
	Taking the conditional expectation in~\eqref{eq-19} with respect to $(X_s, X_u, X_t)$  leads to:
	\begin{align}
	\notag f(X_s, X_t) &= \EE_R[f(X_s, X_t)| X_s, X_u, X_t] \\
	\notag &= \EE_R[a(X_{[s,u]})|X_s, X_u, X_t] + \EE_R[b(X_{[u,t]})| X_s, X_u, X_t]\\
	\label{eq:conditioned} 
	&= \EE_R[a(X_{[s,u]})|X_s, X_u] + \EE_R[b(X_{[u,t]})| X_u, X_t],\qquad \qquad R\ae  
	\end{align}
	where we used the fact that $R$ is reciprocal to deduce the last line.
	\\
	Then, let us show that because of Assumption~\ref{ass:irreducibility}, formula~\eqref{eq:evbd_goes_to_y} holds.  For this, we use classical results in measure theory stating the behaviour of absolute continuity of measures with respect to conditioning. For the sake of completeness, these results are stated at Proposition~\ref{prop:conditioning} and Lemma~\ref{lem:CT}. Since $R_{s,u,t} \ll R_s \otimes R_u \otimes R_t$, by Proposition~\ref{prop:conditioning}, we have:
	\begin{equation*}
 R_{s,t} \ll 	R_s \otimes R_t.
	\end{equation*}
In addition,  by Lemma~\ref{lem:CT}, conditioning~\eqref{eq:irreducibility} on $X_u$ leads to
	\begin{equation*}
	\phantom{\qquad R\ae} R_s \otimes R_t \ll R((X_s, X_t) \in \sbt | X_u), \qquad R\ae
	\end{equation*}
	Bringing together the two last formulas, we get:
	\begin{equation*}
	 \phantom{\qquad R\ae} R_{s,t}\ll R((X_s, X_t) \in \sbt | X_u), \qquad R\ae,
	\end{equation*}
which is	\eqref{eq:evbd_goes_to_y}.
	Hence, let us pick some $y \in \XX$ such that:
	\begin{equation}
	\label{eq:AC}
	R_{s,t} \ll R((X_s, X_t) \in \sbt | X_u = y),
	\end{equation}
	such that $x \mapsto \EE[a(X_{[s,u]})|X_s = x , X_u = y]$ and $z \mapsto \EE[b(X_{[u,t]})| X_u =y , X_t = z]$ are well defined $R_s\ae$ and $R_t\ae$ respectively, and such that~\eqref{eq:conditioned} holds $R(\sbt|X_u = y)\ae$ (The set of such $y$'s has a full mass with respect to $R_u$.) We call:
	\begin{equation*}
	f_s(x) := \EE[a(X_{[s,u]})|X_s=x, X_u=y] \qquad \mbox{and} \qquad f_t(z) := \EE[b(X_{[u,t]})| X_u =y , X_t=z].
	\end{equation*}
	With this choice, by~\eqref{eq:conditioned}:
	\begin{equation*}
%	\phantom{R((X_s, X_t) \in \sbt | X_u = y)} 
	f(x,z) = f_s(x) + f_t(z), \quad R((X_s, X_t) \in \sbt | X_u = y)\ae
	\end{equation*}
	Because of~\eqref{eq:AC}, it also holds $R_{s,t}\ae$ and the result follows.
\end{proof}

\subsection*{The Markov case}
In the case when $R$ is Markov, we can restrict ourselves to take a weaker assumption:
\begin{assumption}
	\label{ass:markov_irreducibility}
	Let $R \in \MO$ be Markov. We say it is irreducible if for all $0 \leq s < t \leq 1$, we have:
	\begin{equation}
	\label{eq:markov_irreducible}
	R_s \otimes R_t \ll R_{s,t} \ll R_s \otimes R_t.
	\end{equation}
\end{assumption}
Indeed, we have:
\begin{lemma}
	Let $R \in \MO$ be Markov and satisfy Assumption~\ref{ass:markov_irreducibility}. Then it satisfies also Assumption~\ref{ass:irreducibility}.
\end{lemma}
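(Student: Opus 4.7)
The plan is to use the Markov property of $R$ to disintegrate $R_{s,u,t}$ through the transition kernel at time $u$, and then apply the two-point equivalence of Assumption~\ref{ass:markov_irreducibility} twice, once for the pair $(s,u)$ and once for the pair $(u,t)$.

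Concretely, I would introduce the transition kernel $K(y, dz) := R(X_t \in dz \mid X_u = y)$, which is well defined for $R_u$-a.e.\ $y$ since $R$ is conditionable. By the Markov property, the conditional law of $X_t$ given $(X_s, X_u)$ depends only on $X_u$ and equals $K(X_u, \sbt)$, which yields the two disintegrations
\[
R_{s,u,t}(dx, dy, dz) = R_{s,u}(dx, dy)\, K(y, dz), \qquad R_{u,t}(dy, dz) = R_u(dy)\, K(y, dz).
\]
Applying Assumption~\ref{ass:markov_irreducibility} to the pair $(u,t)$ gives $R_{u,t} \sim R_u \otimes R_t$. Comparing this equivalence with the above disintegration of $R_{u,t}$ and the trivial disintegration $R_u \otimes R_t = R_u(dy) R_t(dz)$, Lemma~\ref{lem:CT} transfers the equivalence to the kernels: $K(y, \sbt) \sim R_t$ for $R_u$-a.e.\ $y$, with jointly measurable densities $\psi_2(y,z) := dK(y,\sbt)/dR_t$ and $\phi_2(y,z) := 1/\psi_2(y,z)$.

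Applying Assumption~\ref{ass:markov_irreducibility} to $(s,u)$ yields $R_{s,u} \sim R_s \otimes R_u$ with densities $\psi_1, \phi_1$. Plugging both pairs of densities into the Markov disintegration of $R_{s,u,t}$, one obtains
\[
R_{s,u,t}(dx,dy,dz) = \psi_1(x,y)\,\psi_2(y,z)\, R_s(dx)\, R_u(dy)\, R_t(dz),
\]
and symmetrically $R_s \otimes R_u \otimes R_t = \phi_1(x,y) \phi_2(y,z)\, R_{s,u,t}$, which establishes both absolute continuities required by Assumption~\ref{ass:irreducibility}. The only non-routine step is the transfer from equivalence of the joint laws $R_{u,t}$ and $R_u \otimes R_t$ to equivalence of the kernels $K(y, \sbt)$ and $R_t$ for $R_u$-a.e.\ $y$; this should be exactly the content of Lemma~\ref{lem:CT}, and once it is available the argument reduces to multiplying Radon--Nikodym derivatives.
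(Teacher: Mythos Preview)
Your proof is correct and follows essentially the same route as the paper: both use the Markov property to write $R_{s,u,t}=R_{s,u}\otimes R(X_t\in\sbt\mid X_u)$, invoke Lemma~\ref{lem:CT} on the pair $(u,t)$ to obtain $R(X_t\in\sbt\mid X_u=y)\sim R_t$ for $R_u$-a.e.\ $y$, and then combine this with $R_{s,u}\sim R_s\otimes R_u$. The only difference is cosmetic: you write out explicit Radon--Nikodym densities $\psi_1,\psi_2$ (and their reciprocals), whereas the paper argues directly with the equivalence relations.
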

In particular, Lemma~\ref{lem:sum} holds for Markov processes satisfying only Assumption~\ref{lem:reciprocal_markov}.
\begin{proof}
	Let us take $0\leq s < u < t \leq 1$. First, conditioning $	R_u \otimes R_t \ll R_{u,t} \ll R_u \otimes R_t$ with respect to $X_u$  with the help of Lemma~\ref{lem:CT} leads to:
	\begin{equation}
	\label{eq:conditioned_irreducibility}
	\hspace{2cm}R_t \ll R(X_t \in \sbt | X_u) \ll R_t, \qquad R\ae 
	\end{equation}

	Then, we use the disintegration theorem to get the decomposition:
	\begin{equation*}
	R_{s,u,t} = R_{s,u} \otimes R(X_t \in \sbt | X_s, X_u) = R_{s,u} \otimes R(X_t\in \sbt | X_u),
	\end{equation*}
	where the second equality is obtained thanks to the Markov property of $R$. The result follows from combining~\eqref{eq:conditioned_irreducibility} and $R_s \otimes R_u \ll R_{s,u} \ll R_s \otimes R_u$.
\end{proof}

Next result shows that \eqref{eq:markov_irreducible} is indeed
an irreducibility requirement in the sense of Markov processes theory. Let us split it into:
\begin{align*}
&( \textrm{H}_1):\qquad R _{ st}\ll R_s\otimes R_t,\quad \forall 0\le s<t\le 1;\\
&( \textrm{H}_2):\qquad R_s\otimes R_t\ll R _{ st},\quad \forall 0\le s<t\le 1.
\end{align*}

\begin{proposition}
Let $R\in\MO$ be Markov.
\begin{enumerate}[(a)]
\item
Under the assumption $( \textrm{H}_1)$, for all $0\le s<t\le 1, $ we have
\begin{align}
R(X_t\in \bullet \mid X_s=x)&=r(s,x ;t,\bullet)\, R_t,\quad \forall x\in\XX,\ R_s\ae \label{eq-24b}\\
R(X_s\in \bullet \mid X_t=y)&=r(s,\bullet;t,y)\, R_s,\quad \forall y\in\XX,\ R_t\ae,
\notag
\end{align}
where the transition density is given by $ \displaystyle{r(s,x;t,y):= \frac{\D R _{ st}}{\D R_s\otimes R_t}(x,y)}.$

\item
If in addition  $( \textrm{H}_2)$ holds, i.e.\ \eqref{eq:markov_irreducible} is satisfied, then $r$ 
is positive in the sense that
\begin{align}\label{eq-24}
r(s,x;t,y)>0,\quad \forall (x,y), R_s\otimes R_t\ae,\quad \forall 0\le s<t\le 1.
\end{align}

\item
The property \eqref{eq:markov_irreducible} is equivalent to the existence of a transition density $r$ satisfying \eqref{eq-24b} and \eqref{eq-24}.
\end{enumerate}
\end{proposition}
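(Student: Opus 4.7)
The plan is to work entirely at the level of the two-dimensional marginal $R_{st}$ together with standard disintegration arguments; the Markov hypothesis plays essentially no role for (a), only for the interpretation of the transition kernel.

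For (a), I would start from $(\mathrm{H}_1)$ to define $r(s,x;t,y) := \frac{\D R_{st}}{\D R_s \otimes R_t}(x,y)$ via the Radon-Nikodym theorem. Since $R$ is conditionable, $R_s$ is $\sigma$-finite and the disintegration theorem applied to $R_{st}$ against its first marginal $R_s$ yields a kernel $(x, B) \mapsto R(X_t \in B \mid X_s = x)$ with
\[
R_{st}(\D x\, \D y) = R_s(\D x)\, R(X_t \in \D y \mid X_s = x).
\]
On the other hand, $(\mathrm{H}_1)$ gives $R_{st}(\D x\, \D y) = r(s,x;t,y)\, R_s(\D x)\, R_t(\D y)$. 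By Fubini and uniqueness of disintegrations, for $R_s$-almost every $x$ the measure $R(X_t \in \bullet \mid X_s = x)$ coincides with $r(s,x;t,\bullet)\, R_t$; after modifying on an $R_s$-null set (set the kernel equal to $R_t$ there, say), one gets the equality for every $x$. The symmetric statement for the conditional $R(X_s \in \bullet \mid X_t = y)$ is obtained by exchanging the roles of $s$ and $t$.

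For (b), consider the measurable set $N := \{(x,y) \in \XX^2 : r(s,x;t,y) = 0\}$. By definition of $r$ as a density,
\[
R_{st}(N) = \int_N r(s,x;t,y)\, R_s(\D x)\, R_t(\D y) = 0.
\]
Now $(\mathrm{H}_2)$ reads $R_s \otimes R_t \ll R_{st}$, hence $(R_s \otimes R_t)(N) = 0$, which is exactly \eqref{eq-24}. No subtlety is expected here, this is the standard argument showing that a Radon-Nikodym density between mutually absolutely continuous measures is strictly positive almost everywhere.

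For (c), the forward implication is just the combination of (a) and (b). For the converse, assume a version of the transition density $r$ satisfies \eqref{eq-24b} and \eqref{eq-24}. From \eqref{eq-24b} one recovers $R_{st} = r(s,\ccdot;t,\ccdot)\, (R_s \otimes R_t)$ by integrating the kernel against $R_s$, which directly gives $(\mathrm{H}_1)$. For $(\mathrm{H}_2)$, let $A \subset \XX^2$ be measurable with $(R_s \otimes R_t)(A) > 0$; then by \eqref{eq-24}, $(R_s \otimes R_t)(A \cap \{r > 0\}) > 0$, and therefore
\[
R_{st}(A) \geq \int_{A \cap \{r > 0\}} r(s,x;t,y)\, R_s(\D x)\, R_t(\D y) > 0,
\]
which is the contrapositive of $R_s \otimes R_t \ll R_{st}$. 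The only mild technical point I anticipate is the passage from an $R_s$-a.e.\ statement about the disintegration kernel to the \textquotedblleft for every $x$\textquotedblright{} formulation in (a), but this is handled routinely by redefining the kernel on an $R_s$-negligible set without affecting the identity as measures.
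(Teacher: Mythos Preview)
Your proof is correct and follows essentially the same route as the paper's. The only cosmetic difference is that the paper first invokes its appendix result (Proposition~\ref{prop:conditioning}) to obtain $R(X_t\in\sbt\mid X_s=x)\ll R_t$ and $R(X_s\in\sbt\mid X_t=y)\ll R_s$, then introduces separate forward and backward densities $\overrightarrow{r},\overleftarrow{r}$ and identifies both with $r=\D R_{st}/\D(R_s\otimes R_t)$ by writing $R_{st}$ three ways; you collapse this into a single comparison between the disintegration of $R_{st}$ and its Radon--Nikodym representation, which is slightly more direct but the same computation. For (b) and (c) the paper simply declares the statements ``obvious'' once (a) is in hand, so your explicit arguments there are more detailed than what the paper provides. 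One small remark: the notation ``$\forall x\in\XX,\ R_s\ae$'' in the statement means ``for $R_s$-almost every $x$'', so your closing worry about upgrading to a genuine ``for every $x$'' is unnecessary.
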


\begin{proof}
Apply Lemma~\ref{lem:CT} to (H$_1$). We see that $R(X_s\in \bullet \mid X_t=y)\ll R_s$, for $R_t$-almost all $y$ and that $R(X_t\in \bullet\mid X_s=x)\ll R_t$, for $R_s$-almost all $x$. This implies that for all $0\le s<t\le 1, $ there are measurable functions $\overrightarrow{r}$ and $\overleftarrow{r}$ such that
\begin{align*}
R(X_t\in \bullet\mid X_s=x)&=\overrightarrow{r}(s,x;t,\bullet)\, R_t,\quad \forall x\in\XX,\ R_s\ae\\
R(X_s\in \bullet \mid X_t=y)&=\overleftarrow{r}(s,\bullet;t,y)\, R_s,\quad \forall y\in\XX,\ R_t\ae
\end{align*}
 It happens that 
\begin{align*}
\overrightarrow{r}(s,x;t,y)=\overleftarrow{r}(s,x;t,y)=r(s,x;t,y),
\quad \forall (x,y)\in\XXX,\ R _{ s}\otimes R_t\ae
\end{align*}
with $ \displaystyle{r(s,x;t,y):= \frac{\D R _{ st}}{\D R_s\otimes R_t}(x,y),}$ (again, we invoke (H$_1$)).
To see this, note that the joint measure $R _{ st}$  writes as 
\begin{alignat*}{3}
R _{ st}&= r(s,\bullet;t,\bullet)R_s \otimes R_t &&\\
	&=R_s \otimes R(X_t\in \bullet\mid X_s=x)
			 &\ =\ & \overrightarrow{r}(s,\bullet;t,\bullet) R_s \otimes R_t\\
		&=R_t \otimes R(X_s\in \bullet \mid X_t=y)
			 &\ =\ & \overleftarrow{r}(s,\bullet;t,\bullet) R_s\otimes R_t.
\end{alignat*}
(When there are two $\bullet$, the first one refers to the first variable, and the second one to the second variable.). Finally, under (H$_1$) the additional hypothesis (H$_2$) is equivalent to \eqref{eq-24} and statements (b) and (c) are obvious.
\end{proof}

The functions $\overrightarrow{r}$ and $\overleftarrow{r}$ are  the forward and backward transition densities.
By the first the statement of the proposition, it is correct to call $r$ the  transition density without mentioning any direction of time.

\subsection*{Tensorization of irreducibility}
Here we prove:
\begin{lemma}
	\label{lem:tensorization}
	Let $R \in \MO$ be a reciprocal process satisfying Assumption~\ref{ass:irreducibility}. Then for all $0 \leq t_1 < t_2 < \dots < t_k \leq 1$, we have:
	\begin{equation}
	\label{eq:tensorized_irreducibility}
	 R_{t_1} \otimes \dots \otimes R_{t_k} \ll R_{t_1,\dots, t_k} \ll R_{t_1} \otimes \dots \otimes R_{t_k}.
	\end{equation}
\end{lemma}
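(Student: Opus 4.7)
The plan is to prove \eqref{eq:tensorized_irreducibility} by induction on $k$, with the case $k=3$ being Assumption~\ref{ass:irreducibility} itself. The case $k=1$ is trivial, and the case $k=2$ follows by fixing $t_1 < t_2$, picking any intermediate $u \in (t_1, t_2)$, applying Assumption~\ref{ass:irreducibility} to the triple $(t_1, u, t_2)$, and projecting out the middle coordinate; this preserves both directions of absolute continuity (the marginal $R_u$ vanishes from both sides of \eqref{eq:irreducibility}) and yields $R_{t_1, t_2} \sim R_{t_1} \otimes R_{t_2}$.

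For the inductive step, assume \eqref{eq:tensorized_irreducibility} for some $k \geq 3$ and let $t_1 < \dots < t_{k+1}$. The engine of the proof will be the reciprocal chain identity
\begin{equation*}
R(X_{t_{k+1}} \in \cdot \mid X_{t_1}, \ldots, X_{t_k}) = R(X_{t_{k+1}} \in \cdot \mid X_{t_1}, X_{t_k}), \qquad R\ae,
\end{equation*}
obtained by applying reciprocity of $R$ at the pair $(s, u) = (t_1, t_k)$: conditionally on $(X_{t_1}, X_{t_k})$, the inner variables $X_{t_2}, \ldots, X_{t_{k-1}}$, which are measurable with respect to $\sigma(X_{[t_1, t_k]})$, are independent of the outer variable $X_{t_{k+1}}$, which is measurable with respect to $\sigma(X_{[t_k, 1]})$. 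The $k=3$ case applied to $(t_1, t_k, t_{k+1})$ then gives $R_{t_1, t_k, t_{k+1}} \sim R_{t_1} \otimes R_{t_k} \otimes R_{t_{k+1}}$, and disintegrating on $(X_{t_1}, X_{t_k})$ via Lemma~\ref{lem:CT} yields
\begin{equation*}
R(X_{t_{k+1}} \in \cdot \mid X_{t_1} = x_1, X_{t_k} = x_k) \sim R_{t_{k+1}}, \qquad R_{t_1, t_k}\ae
\end{equation*}
Since this equivalence depends only on $(x_1, x_k)$, it persists $R_{t_1, \ldots, t_k}$-a.e.\ via the marginal projection. Substituting both facts into the standard disintegration
\begin{equation*}
R_{t_1, \ldots, t_{k+1}}(\D x_1 \cdots \D x_{k+1}) = R_{t_1, \ldots, t_k}(\D x_1 \cdots \D x_k) \, R(X_{t_{k+1}} \in \D x_{k+1} \mid x_1, \ldots, x_k),
\end{equation*}
and combining with the inductive hypothesis $R_{t_1, \ldots, t_k} \sim R_{t_1} \otimes \cdots \otimes R_{t_k}$, delivers the desired equivalence for $k+1$.

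The one technical nuisance will be that Definition~\ref{def:reciprocal} imposes the strict inequalities $0 < s < u < 1$, so reciprocity at $(t_1, t_k)$ is directly available only when $t_1 > 0$ (the inequality $t_k < 1$ is automatic since $t_k < t_{k+1} \leq 1$). When $t_1 = 0$, I would apply reciprocity at $(\epsilon, t_k)$ and send $\epsilon \downarrow 0$: by continuity of paths one has $\sigma(X_{[0, \epsilon]}) \downarrow \sigma(X_0)$, and the required conditional independence passes to the limit by a routine monotone class argument.
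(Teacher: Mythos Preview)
Your proof is correct and follows essentially the same route as the paper's: induction on $k$, the reciprocal identity $R(X_{t_{k+1}}\in\cdot\mid X_{t_1},\dots,X_{t_k})=R(X_{t_{k+1}}\in\cdot\mid X_{t_1},X_{t_k})$, disintegration of the three-marginal equivalence $R_{t_1,t_k,t_{k+1}}\sim R_{t_1}\otimes R_{t_k}\otimes R_{t_{k+1}}$ via Lemma~\ref{lem:CT}, and the product disintegration combined with the inductive hypothesis. You are in fact slightly more careful than the paper, which does not spell out the cases $k\le 2$ or the boundary issue $t_1=0$; for the latter, rather than a limiting argument you could simply invoke the Remark following Definition~\ref{def:reciprocal} that $R^{X_0}$ is Markov, which yields the needed conditional independence directly.
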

\begin{proof}
Let us prove this by induction. We suppose that~\eqref{eq:tensorized_irreducibility} holds for all $0 \leq t_1 < t_2 < \dots < t_k \leq 1$ for all $3\leq k \leq p$. Then, we choose $0 \leq t_1 < t_2 < \dots < t_{p+1} \leq 1$.  We use once again Lemma~\ref{lem:CT} to do the conditioning of
\begin{equation*}
R_{t_1}\otimes R_{t_p} \otimes R_{t_{p+1}} \ll R_{t_1, t_p, t_{p+1}} \ll R_{t_1}\otimes R_{t_p} \otimes R_{t_{p+1}}
\end{equation*}
with respect to $(X_{t_1}, X_{t_p})$. We get
\begin{equation}
\label{eq:conditioned_irreducibility_reciprocal}
\hspace{2cm} R_{t_1} \otimes R_{t_{p+1}} \ll R(X_{t_{p+1}}\in \sbt | X_{t_1}, X_{t_p}) \ll R_{t_1} \otimes R_{t_{p+1}}, \qquad R\ae
\end{equation}

Then, we use the disintegration theorem to get the decomposition:
\begin{equation*}
R_{t_1, \dots, t_{p+1}} = R_{t_1, \dots, t_p} \otimes R(X_{t_{p+1}} \in \sbt | X_{t_1},\dots, X_{t_p}) = R_{t_1, \dots, t_p} \otimes R(X_{t_{p+1}} \in \sbt | X_{t_1}, X_{t_p}),
\end{equation*}
where the second equality is obtained thanks to the reciprocity of $R$. The result follows from combining~\eqref{eq:conditioned_irreducibility_reciprocal} and the induction assumption.
\end{proof}

\section{Minimizing the relative entropy}\label{sec-min}

The purpose of this section is to prove Theorem~\ref{thm:sol_schro_additive} below, where we prove formula~\eqref{eq:general_factorization} for the solutions of the \emph{extended Schr\"odinger problem}, that is in the case when there is no endpoint constraint. Before stating and proving Theorem~\ref{thm:sol_schro_additive}, let us define precisely the relative entropy in the case when $R$ is unbounded. Then, we will state two basic properties of the relative entropy which will be useful in the proof of the theorem. Finally, we will introduce the extended Schr\"odinger problem, and give the result. 

The relative entropy with respect to the conditionable path  measure $R\in\MO$ is defined for all $P\in\PO$, by
\begin{equation*}
H(P|R):=\EE_P\left[\log \frac{\D P}{\D R} \right]\in (\infty, \infty].
\end{equation*}
In fact, this definition is not completely rigorous when $R$ is unbounded.

\subsection*{Relative entropy with respect to an unbounded measure}
\label{sec:unbounded}

If $R$ is unbounded, one must restrict the definition of $H(\sbt|R)$ to some subset of $\PO$ as follows. As $R$ is assumed to be  conditionable, it is a fortiori  $\sigma$-finite and  there exists some  measurable function $W:\OO\to [0,\infty)$ such that
\begin{equation}\label{eq-10}
z_W:=\IO e ^{-W}\D R<\infty.
\end{equation}
Define the probability measure $R_W:= z_W ^{-1}e ^{-W}\,R$ so that $\log(\D P/\D R)=\log(\D P/\D R_W)-W-\log z_W.$ It follows that for any $P\in \PO$ satisfying $\IO W\D P<\infty,$ the formula 
\begin{align*}
H(P|R):=H(P|R_W)-\IO W\D P&-\log z_W\in (-\infty,\infty],\\ 
&P\in \mathrm{P}_W(\OO):= \left\{P\in\PO: E_P [W]< \infty\right\}
\end{align*}
is a meaningful definition of the relative entropy which is coherent in the following sense. If $\IO W'\,\D P<\infty$ for another measurable function $W':\OO\to[0,\infty)$ such that $z_{W'}<\infty,$ then $H(P|R_W)-\IO W\D P-\log z_W=H(P|R _{W'})-\IO W'\D P-\log z_{W'}\in (-\infty,\infty]$.
\\
Therefore, $H(P|R)$ is well-defined for any $P\in \PO$ such that $\IO W\,\D P<\infty$ for some measurable nonnegative function $W$ verifying \eqref{eq-10}.

\subsection*{A  basic  lemma from statistical physics}
We recall one  fundamental easy result from statistical physics.
Let $(\zz_1, \rr_1)$ and $(\zz_2, \rr_2)$ be two measure spaces with $\rr_1$ and $\rr_2$ two probability measures on $\zz_1$ and $\zz_2$ respectively. Let us call $X_1,X_2$ the projections on the first and second variable respectively in the product space $\zz_1 \times \zz_2$. For any probability measure $\pi\in \mathrm{P}(\zz_1\times\zz_2)$ on the product space $\zz_1\times \zz_2,$ we denote $\pi_1:=X_1{}\pf \pi $ and $\pi_2:= X_2{}\pf\pi$ its marginals on $\zz_1$ and $\zz_2$ respectively.

\begin{lemma}\label{res-05}
For any $ \pi\in \mathrm{P}(\zz_1\times \zz_2)$, we have:
\begin{equation*}
H(\pi|\rr_1\otimes \rr_2)\ge H(\pi_1|\rr_1)+H(\pi_2|\rr_2)=H(\pi_1\otimes \pi_2|\rr_1\otimes \rr_2).
\end{equation*}
The corresponding equality: $H(\pi|\rr_1\otimes \rr_2)= H(\pi_1|\rr_1)+H(\pi_2|\rr_2),$ holds if and only if $\pi$ is a product measure, i.e.\ $\pi=\pi_1\otimes\pi_2.$
\end{lemma}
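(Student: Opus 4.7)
The plan is to reduce everything to the nonnegativity of the relative entropy by proving the following chain rule identity:
\begin{equation*}
H(\pi|\rr_1\otimes\rr_2) \;=\; H(\pi|\pi_1\otimes\pi_2) \;+\; H(\pi_1|\rr_1) \;+\; H(\pi_2|\rr_2).
\end{equation*}
Once this is established, since $\pi_1\otimes\pi_2$ and $\pi$ are both probability measures, Jensen's inequality (or Gibbs' inequality) gives $H(\pi|\pi_1\otimes\pi_2)\ge 0$, with equality if and only if $\pi=\pi_1\otimes\pi_2$; the full statement of the lemma follows immediately, and the second equality in the statement is obtained by applying the same identity with $\pi$ replaced by $\pi_1\otimes\pi_2$ (for which the first term vanishes).

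To prove the identity, first I would dispatch the trivial case $H(\pi|\rr_1\otimes\rr_2)=+\infty$. Otherwise $\pi\ll\rr_1\otimes\rr_2$, and pushing forward by $X_1$ and $X_2$ gives $\pi_i\ll\rr_i$ for $i=1,2$; by Fubini the Radon-Nikodym density of the product factors as
\begin{equation*}
\frac{\D(\pi_1\otimes\pi_2)}{\D(\rr_1\otimes\rr_2)}(x_1,x_2) \;=\; \frac{\D\pi_1}{\D\rr_1}(x_1)\,\frac{\D\pi_2}{\D\rr_2}(x_2),
\end{equation*}
so this density is strictly positive $(\pi_1\otimes\pi_2)$-a.e., hence $\pi\ll\pi_1\otimes\pi_2$ as well, and
\begin{equation*}
\frac{\D\pi}{\D(\rr_1\otimes\rr_2)} \;=\; \frac{\D\pi}{\D(\pi_1\otimes\pi_2)}\cdot\frac{\D\pi_1}{\D\rr_1}(X_1)\cdot\frac{\D\pi_2}{\D\rr_2}(X_2).
\end{equation*}
Taking logarithms and integrating against $\pi$, using that $\EE_\pi[\log(\D\pi_i/\D\rr_i)(X_i)] = \EE_{\pi_i}[\log(\D\pi_i/\D\rr_i)] = H(\pi_i|\rr_i)$ by definition of the pushforward, yields the desired chain rule identity.

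The only mildly delicate point is ensuring every term is well defined and that no $(\infty-\infty)$ ambiguity occurs; this is why I would assume $H(\pi|\rr_1\otimes\rr_2)<\infty$ from the start. Under this assumption, the negative part of $\log(\D\pi/\D(\rr_1\otimes\rr_2))$ is $\pi$-integrable, which forces the negative parts of the three terms on the right-hand side to be integrable too (using the standard inequality $x\log x\ge -e^{-1}$ applied to each marginal density, which shows $H(\pi_i|\rr_i)>-\infty$ automatically since $\rr_i$ are probability measures). The equality case $H(\pi|\pi_1\otimes\pi_2)=0$ characterizes $\pi = \pi_1\otimes\pi_2$ by strict convexity of $x\mapsto x\log x$. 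No obstacle is really expected here: the result is a textbook fact, and the argument is essentially a single line of Radon-Nikodym bookkeeping.
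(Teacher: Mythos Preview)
Your proof is correct, but it follows a different route from the paper's. The paper disintegrates $\pi=\pi_1\otimes\pi^{z_1}$, uses the additive decomposition $H(\pi|\rr_1\otimes\rr_2)=H(\pi_1|\rr_1)+\int H(\pi^{z_1}|\rr_2)\,\pi_1(\D z_1)$, and then applies Jensen's inequality to the convex functional $H(\sbt|\rr_2)$ to bound the second term below by $H(\pi_2|\rr_2)$; the equality case is read off from strict convexity, giving $\pi^{z_1}=\pi_2$ for $\pi_1$-a.e.\ $z_1$. Your argument instead isolates the gap directly as the mutual information $H(\pi|\pi_1\otimes\pi_2)$ via the chain rule identity, and then invokes nonnegativity of relative entropy between probability measures. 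Your approach is arguably slightly more transparent because the defect term is exhibited as a single entropy, and the equality case $H(\pi|\pi_1\otimes\pi_2)=0\Leftrightarrow\pi=\pi_1\otimes\pi_2$ is immediate; the paper's approach, on the other hand, fits more naturally with the additive decomposition of entropy used repeatedly elsewhere in the article. Both are standard and essentially equivalent in difficulty.
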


\begin{proof}
With the disintegration $\pi =\pi_1 \otimes \pi ^{ z_1}$ and the additive decomposition of the entropy, we see that
\begin{align*}
H(\pi|\rr_1\otimes \rr_2)&=H(\pi_1|\rr_1)+ \int _{ \zz_1}H(\pi ^{ z_1}\,|\, \rr_2)\, \pi_1(\D z_1)\\
	&\ge H(\pi_1|\rr_1)+H\Big( \int _{ \zz_1} \pi ^{ z_1}\, \pi_1(\D z_1)\,\Big|\, \rr_2\Big)\\&=H(\pi_1|\rr_1)+H(\pi_2|\rr_2)=H(\pi_1\otimes \pi_2|\rr_1\otimes \rr_2),
\end{align*}
where the inequality is a consequence of the convexity of the relative entropy and Jensen's inequality. Tracking the equality in Jensen's inequality, we see that $\pi$ must satisfy $\pi ^{ z_1}=\pi_2$ for $\pi_1$-almost all $z_1\in\zz_1,$ i.e.\ $\pi=\pi_1\otimes \pi_2.$
\end{proof}

Let us recall the physical interpretation of this result. Consider two \emph{noninteracting} random particle systems  \textsf{1}  and \textsf{2} respectively governed by the measures $\rr_1$ and $\rr_2$. Because of the absence of interaction, the whole system \textsf{1}+\textsf{2} is governed by the product measure $\rr_1\otimes\rr_2$.  Suppose that one observes that the average configurations of \textsf{1} and \textsf{2} are respectively close to $\pi_1$ and $\pi_2$. Then, the most likely actual state $\pi_{ \textsf{1}+\textsf{2}}$ of the whole system \textsf{1}+\textsf{2} is the product $\pi_1\otimes \pi_2,$ meaning that no extra correlation should come into the picture. This result is often quoted as the \emph{additivity of the entropy of noninteracting systems}, since $H(\pi _{ \textsf{1}+\textsf{2}}|\rr_{ \textsf{1}+\textsf{2}})=H(\pi_1|\rr_1)+H(\pi_2|\rr_2).$

\subsection*{A  basic  lemma from theoretical statistics}

Next result is well-known  in theoretical statistics where it gives rise to the notion of \emph{exhaustive statistics} of a dominated statistical model, see~\cite{dacunha2012probability}. Let $\zz$ and $\SS$ be two Polish spaces equipped with their Borel $ \sigma$-fields and $ S:\zz\to\SS$ a measurable mapping. Consider a positive $ \sigma$-finite measure $\rr$ and a probability measure $\pp$  on $\zz$ such that $\pp\ll \rr$. We denote $\rr _{ S}:= S\pf \rr$ and $\pp_{ S}:= S\pf \pp$ and we assume that  $\rr _{ S}$ is also a $ \sigma$-finite measure on $\SS$ to be able to consider the conditional measure $\rr(\sbt| S)$. Because of Proposition~\ref{prop:conditioning}, we have $\pp_s \ll \rr_S$. 

\begin{lemma}\label{res-06}
Let $\rr,\pp$ and $ S$ as above. The two following assertions are equivalent:
\begin{enumerate}[(i)]
\item the following conditional laws coincide:
\begin{align*}
\pp(\sbt| S)=\rr(\sbt | S),\quad \pp\ae
\end{align*}
\item There is a measurable map $f\SS\to [0, \infty)$ such that:
\begin{align*}
\frac{\D \pp}{\D \rr}=f\circ S,\ \rr\ae
\end{align*}
\end{enumerate}
In this case, we have $ \displaystyle{f= \frac{\D \pp _{ S}}{\D \rr _{ S}}}$.
\end{lemma}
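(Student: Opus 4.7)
The plan is to prove the two implications separately by manipulating conditional expectations, leveraging the fact (already noted in the setup) that $\pp_S \ll \rr_S$ by Proposition~\ref{prop:conditioning}.

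For the implication (ii) $\Rightarrow$ (i), I would start from the hypothesis $\D \pp / \D \rr = f \circ S$. The key observation is that $f \circ S$ is $\sigma(S)$-measurable, which should make conditioning on $S$ behave nicely. Concretely, for any bounded measurable $u : \zz \to \RR$ and any bounded measurable $h : \SS \to \RR$, I would compute
\[
\EE_\pp [u\, h(S)] = \EE_\rr [u\, h(S)\, f(S)] = \EE_\rr\!\Big[\EE_\rr[u\mid S]\, h(S)\, f(S)\Big] = \EE_\pp\!\Big[\EE_\rr[u\mid S]\, h(S)\Big],
\]
using successively the hypothesis on the density, the tower property (pulling the $\sigma(S)$-measurable factor out), and the hypothesis again in the reverse direction. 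Since this holds for all bounded $h$, we get $\EE_\pp[u\mid S] = \EE_\rr[u\mid S]$, $\pp$-a.e., which is exactly (i).

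For the converse (i) $\Rightarrow$ (ii), I would set $f := \D \pp_S / \D \rr_S$, using that $\pp_S \ll \rr_S$ by Proposition~\ref{prop:conditioning}. The goal is to show $\D \pp / \D \rr = f \circ S$, $\rr$-a.e. For any bounded measurable $u$, I would chain the following identities: starting from $\EE_\pp[u] = \EE_\pp[\EE_\pp[u \mid S]]$, replace the inner conditional expectation by $\EE_\rr[u \mid S]$ thanks to (i); this leaves a function of $S$ integrated against $\pp$, which equals its integral against $\pp_S$; replace $\pp_S$ by $f\, \rr_S$; recognize the result as $\EE_\rr[u \cdot f\circ S]$ via disintegration of $\rr$ along $S$. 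Concretely,
\[
\EE_\pp[u] = \int_\SS \EE_\rr[u \mid S = s]\, \pp_S(\D s) = \int_\SS \EE_\rr[u \mid S = s]\, f(s)\, \rr_S(\D s) = \EE_\rr[u \cdot f \circ S].
\]
Since this holds for every bounded measurable $u$, we conclude $\D \pp / \D \rr = f \circ S$, $\rr$-a.e., which also automatically gives the final identification $f = \D \pp_S / \D \rr_S$.

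The main obstacle I anticipate is a minor one: making sure all conditional expectations and Radon--Nikodym derivatives are genuinely well-defined under $\sigma$-finiteness of $\rr$ and $\rr_S$ (rather than finiteness), and carefully tracking whether equalities hold $\rr$-a.e.\ or $\pp$-a.e.\ at each step. Apart from this bookkeeping, both implications reduce to straightforward computations with the tower property, and the identification of $f$ with $\D \pp_S / \D \rr_S$ is a byproduct of the second implication.
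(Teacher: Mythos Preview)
Your proposal is correct and follows essentially the same route as the paper: both directions are handled by the tower property and disintegration along $S$, with (i)$\Rightarrow$(ii) being literally the same computation as the paper's, and your (ii)$\Rightarrow$(i) being a slight repackaging of the paper's two-step argument (first identifying $f$ with $\D\pp_S/\D\rr_S$, then matching the conditional laws) into a single chain of equalities. The only small omission is that you do not explicitly argue that \emph{any} $f$ satisfying (ii) must equal $\D\pp_S/\D\rr_S$ (you only construct such an $f$ in the converse), but this is immediate by testing against $u=v\circ S$ as the paper does.
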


\begin{proof}
\Boulette{(i)$\Rightarrow$(ii)}
The statement: "$\pp(\sbt| S)=\rr(\sbt | S),\ \pp\ae$" is equivalent to $\pp=\int _{ \SS}\rr(\sbt| S=s)\, \pp _{ S}(\D s)$. Hence, for any bounded measurable function $u:\zz\to\RR,$
\begin{align*}
\EE_\pp[u]&=\int _{ \SS}\EE_\rr[u| S=s]\, \pp _{ S}(\D s)
	=\int _{ \SS}\EE_\rr[u| S=s] \frac{\D \pp _{ S}}{\D \rr _{ S}}(s)\, \rr _{ S}(\D s)\\
	&=\int _{ \SS}\EE_\rr\Big[\frac{\D \pp _{ S}}{\D \rr _{ S}}(s)\ u \Big| S=s\Big] \, \rr _{ S}(\D s)
	= \int _{ \zz}\frac{\D \pp _{ S}}{\D \rr _{ S}}( S)\ u\D \rr
\end{align*}
which means $ \displaystyle{ \frac{\D \pp}{\D \rr}}=\frac{\D \pp _{ S}}{\D \rr _{ S}}( S).$

\Boulette{(ii)$\Rightarrow$(i)} For any bounded measurable function $u:\zz\to\RR,$
\begin{equation*}
\EE_\pp[u]=\int _{ \zz}f( S)\ u\D \rr
	=\int _{ \SS}\EE_\rr[u| S=s] f(s)\, \rr _{ S}(\D s)
	=\int _{ \SS}\EE_\rr[u| S=s]\, (f\rr _{ S})(\D s).
\end{equation*}
Choosing $u=v\circ S$ with $v:\SS\to\RR$ bounded and measurable leads to
$\EE_{ \pp_{ S}}(v)=\int _{ \SS}v\, \mathrm{d}(f\rr _{ S})$. This means that $ \displaystyle{f= \frac{\D \pp _{ S}}{\D \rr _{ S}}}.$ Finally, identifying 
$ \EE_\pp[u\ v( S)]=\int _{ \SS}\EE_\pp[u| S=s]v(s)\, \pp_{ S}(\D s)$ with $\EE_\pp[u\ v( S)]=\int _{ \SS}\EE_\rr[u| S=s]v(s)\, \pp_{ S}(\D s)$ gives us 
$\pp(\sbt| S)=\rr(\sbt | S),\ \pp\ae$
\end{proof}

\subsection*{Statement of the extended Schrödinger problem}

We are interested in the marginal constraint $P_t= \mu_t$ for all $t\in\TT\subset [0,1]$, where $ \mu_t\in\PX, \ t\in\TT.$ The entropy minimization problem we consider is:
\begin{equation}\label{eq-11}
H(P|R)\to \textrm{min};\qquad P\in\PO: P_t= \mu_t,\forall t\in\TT.
\end{equation}
It is a generalization of the dynamical Schrödinger problem corresponding to $\TT= \left\{0,1\right\}$. The properties of the relative entropy lead to the following existence result stating that a unique solution to this problem exists if and only if there is at least one competitor. Results concerning the existence of such competitors can be found for instance in~\cite{daw87,follmer1988random}, but we do not wish to develop them here.

\begin{proposition}\label{res-10}
Problem  \eqref{eq-11} admits a solution if and only if there exists some $Q\in\PO$ such that $Q_t= \mu_t$ for all $t\in\TT$ and $H(Q|R)< \infty.$ In this case, the solution $P$ is unique.
\end{proposition}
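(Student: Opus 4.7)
The ``only if'' direction is immediate, since any solution $P$ of \eqref{eq-11} itself serves as a competitor with finite entropy. For the ``if'' direction, the plan is to apply the direct method of the calculus of variations after reducing to a probability reference measure. Given a competitor $Q\in\PO$ with $Q_t=\mu_t$ for all $t\in\TT$ and $H(Q|R)<\infty$, I will pick a nonnegative measurable $W$ on $\OO$ with $z_W<\infty$ and $\int W\,\mathrm{d}Q<\infty$; in the typical case $\TT\neq\emptyset$, a convenient choice is $W=V(X_{t_0})$ for some $t_0\in\TT$ and $V:\XX\to[0,\infty)$ with $\int e^{-V}\,\mathrm{d}R_{t_0}<\infty$ and $\int V\,\mathrm{d}\mu_{t_0}<\infty$, whose existence follows from the $\sigma$-finiteness of $R_{t_0}$ and from $\mu_{t_0}$ being a probability on a Polish space. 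With this choice, $\int W\,\mathrm{d}P=\int V\,\mathrm{d}\mu_{t_0}$ is a finite constant on the constraint set $C:=\{P\in\PO:P_t=\mu_t,\ \forall t\in\TT\}$, so that writing $R_W:=z_W^{-1}e^{-W}R$, the problem becomes equivalent to minimizing $P\mapsto H(P|R_W)$ on $C$, with $R_W$ now a bona fide probability.

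For existence, I will first observe that $C$ is convex and weakly closed, since each marginal map $P\mapsto P_t$ is weakly continuous on $\PO$. Taking a minimizing sequence $(P^n)\subset C$, eventually $H(P^n|R_W)\leq H(Q|R_W)+1$, and because $R_W$ is a probability the sub-level sets $\{H(\,\cdot\,|R_W)\leq c\}$ are known to be weakly sequentially compact (the relative entropy with respect to a probability measure on a Polish space is a good rate function, with tightness obtained from the entropy inequality and uniform integrability of the densities from the superlinearity of $x\mapsto x\log x$). Extracting a subsequence $P^{n_k}\rightharpoonup P^*$, the limit lies in $C$ by weak closedness, and the weak lower semi-continuity of $H(\,\cdot\,|R_W)$, provided by the Donsker--Varadhan dual formula $H(P|R_W)=\sup_{\phi}\{\int\phi\,\mathrm{d}P-\log\int e^{\phi}\,\mathrm{d}R_W\}$ as a supremum over bounded continuous $\phi$ (each term being weakly continuous in $P$), yields $H(P^*|R_W)\leq\liminf_k H(P^{n_k}|R_W)$. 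Converting back through $H(P|R)=H(P|R_W)-\int W\,\mathrm{d}P-\log z_W$ with $\int W\,\mathrm{d}P$ constant on $C$ shows that $P^*$ realizes the infimum.

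Uniqueness follows from the strict convexity of $H(\,\cdot\,|R)$ on its effective domain, inherited from the strict convexity of $x\mapsto x\log x$ (which can also be recovered as a special case of the equality statement in Lemma~\ref{res-05}). Indeed, if $P^1\neq P^2$ were two solutions, the midpoint $\tfrac{1}{2}(P^1+P^2)$ would lie in the convex set $C$ and would satisfy $H(\tfrac{1}{2}(P^1+P^2)|R)<\tfrac{1}{2}(H(P^1|R)+H(P^2|R))=\inf_{C}H(\,\cdot\,|R)$, contradicting minimality. The main technical delicacy I anticipate is precisely the tuning of $W$: it must simultaneously render $R_W$ a probability, place $Q$ in $\mathrm{P}_W(\OO)$, and keep $\int W\,\mathrm{d}P^n$ under control along the minimizing sequence to prevent the infimum from being $-\infty$ in disguise. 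The choice $W=V(X_{t_0})$ with $t_0\in\TT$ handles all three requirements in one stroke by exploiting the marginal constraint, which is why the argument is clean as soon as $\TT$ is non-empty.
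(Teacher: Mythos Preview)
Your argument follows exactly the direct-method template the paper sketches (compact sublevel sets of the entropy, closedness of the constraint set, strict convexity for uniqueness), and you are in fact more careful than the paper's sketch in handling the case where $R$ is merely $\sigma$-finite by passing to $R_W$.

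One small imprecision is worth flagging: the existence of $V:\XX\to[0,\infty)$ with both $\int e^{-V}\,\D R_{t_0}<\infty$ and $\int V\,\D\mu_{t_0}<\infty$ does \emph{not} follow from $\sigma$-finiteness of $R_{t_0}$ and $\mu_{t_0}\in\PX$ alone. If $R_{t_0}(A_n)$ grows fast enough along a partition $(A_n)$, any $V$ making $e^{-V}R_{t_0}$ finite may force $\int V\,\D\mu_{t_0}=\infty$ for certain probabilities $\mu_{t_0}$. The correct justification uses the hypothesis $H(Q|R)<\infty$: the additive decomposition of entropy gives $H(\mu_{t_0}|R_{t_0})\le H(Q|R)<\infty$ (the conditional measures $R^{X_{t_0}=x}$ being probabilities, the bridge contribution is nonnegative), and by the very definition of finite entropy with respect to a $\sigma$-finite reference this furnishes exactly such a $V$. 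With this patch your argument is complete and matches the paper's.
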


\noindent\textit{Sketch of proof.}\ 
This argument is standard. The conclusion follows  from the three following facts:
\begin{enumerate}[(i)]
\item
As $\Omega$ is a Polish space, the relative entropy $H(\sbt|R)$ has compact sublevels on $\PO$ with respect to the usual narrow topology $\sigma(\PO,\CO)$.
\item
The constraint set $ \left\{P\in\PO; P_t= \mu_t,\ t\in\TT\right\} $ is closed. 
\item
The existence of $Q$ in our assumption implies that the compact set 
\begin{equation*}
\left\{P\in\PO; P_t= \mu_t,\ t\in\TT\} \cap \{P\in\PO; H(P | R) \leq H(Q|R) \right\}
\end{equation*} 
is not empty. Hence, it contains a solution.
\item Uniqueness follows from the strict convexity of the relative entropy, and from the convexity of the constraint set.
\end{enumerate}

\begin{remark}[The closure of $\TT$]
For all $P \in \PO$, $t \mapsto P_t$ is  continuous.  For this reason, the extended Schrödinger problem~\eqref{eq-11} only admits solution if $t\in \TT \mapsto \mu_t$ can be extended into a continuous map on $[0,1]$. In particular, it must admit limits for $t \in \partial \TT$, and if so, for all $t \in \partial \TT$, the property:
\begin{equation*}
P_t = \lim_{\substack{s \to t \\ s \in \TT}} \mu_s
\end{equation*}
holds automatically.

Hence, it is clear that we can suppose without loss of generality that $\TT$ is closed. This is  systematically assumed in the following.
\end{remark}

\subsection*{Factorization result for the extended Schr\"odinger problem} We are now ready to state the central result of this article: a solution to this type of extended Schrödinger problems is Markov, and the  { additive functional} given by Theorem~\ref{res-02} cancels outside $\mathcal{T}$. Even if in next Section \ref{sec:finitely-many} we  only apply this result to the case when $\TT$ is finite, we think that this result is interesting in the general case.

\begin{theorem}	\label{thm:sol_schro_additive}
	Suppose that $R$ is Markov, and that problem~\eqref{eq-11} admits a (unique) solution $P$. Then $P$ is Markov. In particular, there exists  a regular additive functional $A$ such that:
	\begin{equation}
	\label{eq:sol_schro_additive}
	\frac{\D P}{\D R} = \exp\Big( A\big( [0,1] \big) \Big).
	\end{equation}
	Moreover, $A([0,1])$ is $ \sigma(X_\TT)$-measurable, and for any interval $I \subset[0,1]$ with $\inf I = s$ and $\sup I = t$, $A(I)$ is $\sigma(X_s, X_t, X_{I \cap \TT})$-measurable.
	
	Besides, if $\TT$ is closed, and if $R$ satisfies the irreducibility Assumption~\ref{ass:markov_irreducibility}, there exists an additive functional $A$ (which is not regular anymore; see Remark~\ref{rem:lost_of_regularity}) with the following additional property: for all interval $I \subset [0,1]$:
	\begin{equation}
	\label{eq:A_cancellations}
	I \cap \TT = \emptyset \quad \Rightarrow \quad A( I ) = 0.
	\end{equation}
	In this case, for all interval $I \subset[0,1]$, $A(I)$ is $X_{I \cap \TT}$-measurable.
	
	If the complementary of $\TT$ is a finite union of disjoint open intervals, then this additive functional $A$ is regular.
\end{theorem}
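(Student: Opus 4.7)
The plan is to proceed in four main steps. First, I would establish that $P$ is Markov by a Markovianization argument. For each $t\in[0,1]$, define $P^t$ to be the path measure whose law at time $t$ equals $P_t$ and whose regular conditional given $X_t=x$ is the product of the past and future regular conditionals of $P$ given $X_t=x$. A direct fiberwise averaging shows that $P^t_u=P_u$ for every $u\in[0,1]$, so $P^t$ remains admissible for~\eqref{eq-11}. Since $R$ is Markov, $R(\sbt|X_t=x)$ is a product of a past- and a future-law, so Lemma~\ref{res-05} applied inside each fiber together with the additive decomposition of relative entropy yields $H(P^t|R)\le H(P|R)$. Uniqueness forces $P=P^t$, and the equality case of Lemma~\ref{res-05} gives the conditional independence of past and future given $X_t$. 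As $t$ was arbitrary, $P$ is Markov, and Theorem~\ref{res-02} furnishes a regular additive functional $A$ with $\D P/\D R=\exp(A([0,1]))$.

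Second, I would prove the measurability assertions by running the same scheme one notch further. Consider the competitor $P':=\int R(\sbt|X_\TT=x)\,P_\TT(\D x)$. It shares the $\TT$-marginals of $P$, and the additive decomposition of relative entropy gives $H(P'|R)=H(P_\TT|R_\TT)\le H(P|R)$, so uniqueness implies $P=P'$. Lemma~\ref{res-06} applied with $S=X_\TT$ then produces a measurable $\phi$ with $\D P/\D R=\phi(X_\TT)$, hence $A([0,1])$ is $\sigma(X_\TT)$-measurable. For a closed interval $[s,t]$, the construction of $A$ in the proof of Theorem~\ref{res-02} writes $A([s,t])=\log\EE_R[\phi(X_\TT)\mid X_{[s,t]}]$. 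Splitting $X_\TT$ into $X_{\TT\cap[s,t]}$, which is $X_{[s,t]}$-measurable and escapes the conditional expectation, and $X_{\TT\setminus[s,t]}$, whose $R$-conditional law given $X_{[s,t]}$ depends only on $(X_s,X_t)$ by the Markov property of $R$, identifies $A([s,t])$ as a function of $X_s$, $X_t$, and $X_{\TT\cap[s,t]}$. For a general interval $I$ with endpoints $s,t$, the atomic corrections $A(\{s\}),A(\{t\})$ are $\sigma(X_s),\sigma(X_t)$-measurable, so $A(I)$ inherits the desired measurability.

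Third, under the hypotheses that $\TT$ is closed and $R$ satisfies Assumption~\ref{ass:markov_irreducibility}, the subsequent lemma gives that $R$ also satisfies Assumption~\ref{ass:irreducibility}, making Lemma~\ref{lem:sum} available. For each component $(a_i,b_i)$ of $\TT^c$ with $a_i,b_i\in\TT$, the previous step shows $A([a_i,b_i])$ is $\sigma(X_{a_i},X_{b_i})$-measurable; additivity split at any $u_i\in(a_i,b_i)$ writes it as a sum of a $\sigma(X_{[a_i,u_i]})$-measurable term and a $\sigma(X_{[u_i,b_i]})$-measurable term, putting Lemma~\ref{lem:sum} in position to produce measurable $f_i^\pm$ with $A([a_i,b_i])=f_i^-(X_{a_i})+f_i^+(X_{b_i})$, $R\ae$. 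I would then define $B$ by setting $B(I):=0$ on every interval $I$ contained in some component of $\TT^c$, absorbing $f_i^-(X_{a_i})$ and $f_i^+(X_{b_i})$ into the atomic contributions of $A$ at $a_i$ and $b_i$ respectively, keeping $B=A$ on intervals contained in $\TT$, and extending to $\mathcal{I}$ by additivity. The resulting $B$ is an additive functional satisfying $B([0,1])=A([0,1])$ $R\ae$, vanishing on every interval disjoint from $\TT$, and with $B(I)$ measurable with respect to $\sigma(X_{I\cap\TT})$. When $\TT^c$ has only finitely many components, each $t\in\TT$ receives at most two absorbed terms, and the regularity of $A$ is transferred to $B$.

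The main obstacle is the construction of $B$ itself: one must organize the possibly countably many applications of Lemma~\ref{lem:sum} so that the resulting $B$ is genuinely a well-defined additive functional on all of $\mathcal{I}$, verify additivity on every intersection of adjacent intervals, and manage the countably many $R$-null sets uniformly. It is precisely the accumulation of boundary contributions from infinitely many components of $\TT^c$ near a point of $\TT$ that destroys the regularity of $B$ in general, as anticipated in Remark~\ref{rem:lost_of_regularity}.
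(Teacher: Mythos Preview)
Your proposal is correct and follows essentially the same route as the paper: the Markovianization competitor in Step~1, the conditioning competitor $P'=\int R(\sbt|X_\TT)\,\D P_\TT$ in Step~2, and the component-by-component use of Lemma~\ref{lem:sum} in Step~3 are exactly the paper's arguments.

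One small but genuine difference is worth noting. For the interval measurability of $A([s,t])$, the paper argues by observing that the restriction $(X_{[s,t]})\pf P$ solves a new extended Schr\"odinger problem on $[s,t]$ with the added endpoint constraints $Q_s=P_s$, $Q_t=P_t$, and then re-applies the global measurability step. Your argument is more direct: you write $A([s,t])=\log\EE_R[\phi(X_\TT)\mid X_{[s,t]}]$ and use the Markov property of $R$ to see that the conditional law of $X_{\TT\setminus[s,t]}$ given $X_{[s,t]}$ factors through $(X_s,X_t)$. This is valid and arguably cleaner, since it avoids setting up and solving an auxiliary minimization problem; the paper's approach, on the other hand, makes the role of the endpoint constraints more transparent and generalizes smoothly to the Br\"odinger setting later on.

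Your Step~3 is sketched at a higher level than the paper's explicit formula (which defines $\tilde A([s,t])$ via the maps $\varphi,\psi$ snapping endpoints to the nearest $\TT$-point). In particular, ``keeping $B=A$ on intervals contained in $\TT$'' is vacuous when $\TT$ has empty interior, so the actual definition of $B$ on a general $[s,t]$ must be written out explicitly rather than recovered by additivity from those two rules; the paper's formula does precisely this. You correctly identify this construction and the handling of the countably many null sets as the main obstacle, so the plan is sound even if the execution remains to be filled in.
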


\begin{remark}
	\label{rem:lost_of_regularity}
	As we saw in the statement of Theorem~\ref{thm:sol_schro_additive}, for a general $\TT$ we do not know how to access to  a regular additive functional satisfying~\eqref{eq:A_cancellations}. However, the $A$ we are going to build is in some sense "regular up to a closed set of empty interior". More precisely if one writes the complementary of $\TT$ as a countable union of disjoint open intervals:
	\begin{equation*}
	\TT^c = \bigsqcup_ {i\in\Lambda} (s_i, t_i),
	\end{equation*}
	then regularity fails at the accumulation points of the endpoints $\{s_i, t_i\, | \, i \in \Lambda\}$. In particular, if the union is finite, regularity is preserved, as stated in Theorem~\ref{thm:sol_schro_additive}.
\end{remark}

\begin{proof}
\noindent$\bullet$ \ \emph{The solution $P$ is Markov.}
We denote for any $0< s< 1$, $Q ^{ X_s}:=Q(\sbt| X_s),$ $Q ^{ X_s}_{ \leftarrow}:=(X _{ [0,s]})\pf Q ^{ X_s}$ and $Q ^{ X_s}_{ \rightarrow}:=(X _{ [s,1]})\pf Q ^{ X_s}$ for any conditionable $Q\in\MO.$ Let us also define for any $Q\in\PO,$
\begin{equation*}
\widetilde Q^s:= \IX [Q ^{ X_s=x}_{ \leftarrow}\otimes Q ^{ X_s=x}_{ \rightarrow}] \, Q_s(\D x)\in\PO.
\end{equation*}
Referring to \eqref{eq-01}, we have to prove that for all $0<s<1,$ the solution $P$ satisfies
\begin{equation*}
P^{ X_s}=P ^{ X_s} _{ \leftarrow}\otimes P ^{ X_s}_{ \rightarrow},\quad P\ae
\end{equation*}
or equivalently
\begin{align*}
\widetilde P^s=P.
\end{align*}
To this purpose, it is sufficient to show that
\begin{enumerate}[(i)]
\item
for any  $Q\in\PO,$ we have
\begin{align*}%\label{eq-16}
H(\widetilde Q^s|R)\le H(Q|R),\quad \forall 0< s<1
\end{align*}
with equality if and only if $Q=\widetilde Q^s,$ 
\item
and also that
\begin{align*}%\label{eq-17}
\widetilde Q_t^s=Q_t\in\PX,\quad \forall 0\le t\le1,\ 0<s<1.
\end{align*}
\end{enumerate}
Let us fix $s$ and $Q\in\PO$ and start proving (i). By the additive decomposition of  the entropy, 
\begin{align*}
H(Q|R)&=H(Q_s|R_s)+\IX H(Q ^{ X_s=x}\,|\, R ^{ X_s=x})\, Q_s(\D x)\\
	&=H(Q_s|R_s)+\IX H(Q ^{ X_s=x}\,|\, R ^{ X_s=x}_{ \leftarrow}\otimes R ^{ X_s=x}_{ \rightarrow})\, Q_s(\D x)\\
	&\ge H(Q_s|R_s)+\IX H(Q ^{ X_s=x}_{ \leftarrow}\otimes Q ^{ X_s=x}_{ \rightarrow}\,|\, R ^{ X_s=x}_{ \leftarrow}\otimes R ^{ X_s=x}_{ \rightarrow})\, Q_s(\D x),
\end{align*}
where at the second equality we use the assumed Markov property of $R$ and at the last inequality we apply Lemma \ref{res-05} with $\rr_1=R ^{ X_s=x}_{ \leftarrow}$ and $\rr_2=R ^{ X_s=x}_{ \rightarrow}.$ In addition, Lemma~\ref{res-05} tells us that if the identity:
\begin{equation*}
Q^{ X_s}=Q ^{ X_s} _{ \leftarrow}\otimes Q ^{ X_s}_{ \rightarrow},\quad Q\ae
\end{equation*}
 fails, then the last inequality is strict. This proves (i).

 It remains to prove (ii), i.e.\  the operation
$Q \leadsto \widetilde Q^s$ does not alter the marginals.
For any bounded measurable function $a:\zz\to\RR$ and any $t\in[0,s],$
\begin{align*}
E_Q[a(X_t)]
	%=\IX E _{ Q ^{ X_s=x}}[a(X_t)]&\, Q_s(\D x)
	=\IX E _{ Q ^{ X_s=x}_{ \leftarrow}}[a(X_t)]\, Q_s(\D x)
	=\IX E _{ Q ^{ X_s=x}_{ \leftarrow}\otimes Q ^{ X_s=x}_{ \rightarrow}}[a(X_t)]\, Q_s(\D x)=E _{ \widetilde Q^s}[a(X_t)]
\end{align*}
and for any $t\in[s,1]$ a similar reasoning still works:
\begin{align*}
E_Q[a(X_t)]
	%=\IX E _{ Q ^{ X_s=x}}[a(X_t)]&\, Q_s(\D x)
	=\IX E _{ Q ^{ X_s=x}_{ \rightarrow}}[a(X_t)]\, Q_s(\D x)
	=\IX E _{ Q ^{ X_s=x}_{ \leftarrow}\otimes Q ^{ X_s=x}_{ \rightarrow}}[a(X_t)]\, Q_s(\D x)=E _{ \widetilde Q^s}[a(X_t)].
\end{align*}
This proves (ii).
\\
Therefore, the solution $P$  is Markov and we know with Theorem \ref{res-02} that this is equivalent to:
\begin{align*}
\frac{\D P}{\D R}=\exp\Big( A\big( [0,1] \big) \Big)
\end{align*}
for some regular additive functional $A$.
\\
\noindent$\bullet$ \ \emph{Measurability.}
To prove that $A\big( [0,1] \big)$ is $ \sigma(X_\TT)$-measurable, it is enough to show that
\begin{equation}\label{eq-13}
\frac{\D P}{\D R} \mbox{ is }X_{\TT}\mbox{-measurable}.
\end{equation}
Using the additive decomposition of the entropy, we see that
\begin{align*}
H(P|R)=H(P _{ \TT}|R _{ \TT})
	+\int _{ \zz ^{ \TT}}H(P ^{ X _{ \TT}=\zeta}| R ^{ X _{ \TT}=\zeta})\,P _{ \TT}(\D \zeta)
\end{align*}
and it follows that 
\begin{equation*}
\widehat P := \int _{ \zz ^{ \TT}} R ^{ X _{ \TT}=\zeta} \,P _{ \TT}(\D \zeta)\in\PO
\end{equation*}
satisfies
\begin{align*}
H(\widehat P|R)=H(P _{ \TT}|R _{ \TT})\le H(P|R).
\end{align*}
We also have  $\widehat P _{ \TT}=P _{ \TT}$,  implying a fortiori that $\widehat P_t=P_t$ for all $t\in\TT.$ As a consequence, the solution $P$ of the Schrödinger problem satisfies $P=\widehat P.$  In other words $P ^{ X _{ \TT}}= R ^{ X _{ \TT}},$ $P\ae$ and we know with Lemma \ref{res-06} that this is equivalent to \eqref{eq-13}. 
\\
The proof of the fact that for an interval $I \subset [0,1]$ with $\inf I = s$ and $\sup I = t$, $A(I)$ is $\sigma(X_s, X_t, X_{I \cap \TT})$-measurable follows the same lines by noticing that the restriction $X_I {}\pf P$ of $P$ to $I$  is a solution of the extended Schrödinger problem~\eqref{eq-11} between the times $s$ and $t$:
\begin{equation*}
H(Q|X_{\overline I}{}\pf R)\to \textrm{min};\quad P\in\mathrm{P}\big(\Omega_{\overline{I}}): Q_u= \mu_u,\forall u\in I\cap\TT, \, Q_s = P_s, \, Q_t = P_t,
\end{equation*}
where $\Omega_I$ is the set of paths  on the time interval $\overline{I}$, with values in $\XX$.
\\
\noindent$\bullet$ \ \emph{Construction of a version of $A([0,1])$  canceling outside $\TT$.} 
Now, in case $R$ satisfies the irreducibility Assumption~\ref{ass:markov_irreducibility}, the goal is to build an additive functional $\tilde{A}$ such that $\tilde{A}([0,1]) = A([0,1])$  and such that~\eqref{eq:A_cancellations} holds for $\tilde{A}$.

First, we can suppose without loss of generality that $0,1 \in \TT$. Indeed, if it is not the case, by calling $s := \min \TT$ and $t := \max \TT$, it suffices to prove the result for the restriction $P_{[s,t]}$ and $R_{[s,t]}$ and then to extend the obtained additive functional by $0$ on the intervals included outside $[s,t]$.  

Here is the procedure to define $\tilde{A}$ from $A$. Let us write the complementary set of $\TT$ as a countable union of disjoint open intervals:
\begin{equation*}
\TT^c = \bigsqcup_ {i\in\Lambda} (s_i, t_i).
\end{equation*}
For each $i \in \Lambda$, call $u_i := (s_i + t_i)/2$. We have:
\begin{equation*}
A((s_i,t_i)) = A((s_i,u_i]) + A((u_i,t_i)).
\end{equation*}
but by the first part of the theorem, $A((s_i,t_i))$ is $\sigma(X_{s_i}, X_{t_i})$-measurable, so that we are in the framework of Lemma~\ref{lem:sum}: there exist $\alpha_i, \beta_i$ such that $R\ae$,
\begin{equation*}
A((s_i,t_i)) = \alpha_i(X_{s_i}) + \beta_i(X_{t_i}).
\end{equation*}

Let us define $\tilde{A}([s,t])$ for $s \leq t \in [0,1]$ in the following way. We call $ \varphi(s) := t_i$ if $s_i \leq s \leq t_i$ and $\varphi(s) = s$ otherwise. Correspondingly, we call $\psi(t)=  s_i$ if $s_i \leq t \leq t_i$ and $\psi(t) = t$ otherwise. Then, we set: 
\begin{equation*}
\tilde{A}\big( [s,t] \big) := \1_{\{\varphi(s) \leq \psi(t)\}} A\big( [\varphi(s),\psi(t)] \big) + \sum_{i \in \Lambda} \1_{\{ s_i <s \leq t_i \leq t\}} \beta_i(X_{t_i}) + \sum_{j \in \Lambda} \1_{\{ s \leq s_j\leq t <t_j\}} \alpha_j(X_{s_j}).
\end{equation*}
(Note that for each sum, at most one term is nonzero so that this formula is well defined.) Clearly  $\tilde{A}([0,1]) = A([0,1])$,  $\tilde{A}([s,t])$ is  $X_{[s,t]}$-measurable, and for any interval $I \subset\TT^c$, this formula leads to $\tilde{A}(I) = 0$. 

Let us prove that $\tilde{A}$ is an additive functional. Once again, we follow Remark~\ref{contentonclosedintervals}. Let us take $0 \leq s  \leq u \leq v \leq t$ and  suppose that $\tilde{A}([u,v]) = - \infty$. We show that:
\begin{equation}
\label{eq:condition_infinity}
\tilde{A}([s,v]) = \tilde{A}([u,t]) = \tilde{A}([s,t]) = -\infty.
\end{equation}
It means that $\varphi(u) \leq \psi(v)$ and that one of the following holds:
\begin{itemize}
	\item $A([\varphi(u), \psi(v)]) = - \infty$,
	\item there exists $i \in \Lambda$ such that $s_i < u \leq t_i \leq v$ and $\beta_i (X_{t_i}) = - \infty$,
	\item there exists $i \in \Lambda$ such that $u \leq s_i \leq v < t_i$ and $\alpha_i (X_{s_i}) = - \infty$.
\end{itemize}
In the first case, as $[\varphi(u), \psi(v)]$ is a subset of $[\varphi(s), \psi(v)]$, $[\varphi(u), \psi(t)]$ and $[\varphi(s), \psi(t)]$, $\tilde{A} = - \infty$ on this three intervals, so that~\eqref{eq:condition_infinity} holds. 
\\
In the second case, $\beta_i(X_{t_i})$ also intervenes in the definition of $\tilde{A}([u,t])$, which is hence infinite. Then, either $s_i< s$, and so $\beta_i(X_{t_i})$ also intervenes in the definition of $\tilde{A}([s,v])$, and $\tilde{A}([s,t])$ which let us conclude, or $s \leq s_i$ so that $(s_i,t_i)$ is a subset of both $[\varphi(s), \psi(v)]$ and $[\varphi(s), \psi(t)]$. In this last case, we conclude by using the fact that thanks to~\eqref{eq:condition_infinity}, $A((s_i, t_i)) = - \infty$.
\\
The third case is treated in the same way.

Now, we suppose that $\tilde{A}([u,v])$ is finite and we want to show:
\begin{equation}
\label{eq:check_additive}
\tilde{A}([s,t]) + \tilde{A}([u,v]) = \tilde{A}([s,v]) + \tilde{A}([u,t]).
\end{equation}
There are several cases to deal with, let us treat them one by one.
\begin{itemize}
	\item If there is some  $i \in \Lambda$ such that $[s,t] \subset (s_i, t_i)$, then every term in~\eqref{eq:check_additive} is zero.
	\item If $s,u$ are in $\TT \backslash ( \cup_i (s_i,t_i])$, and $v,t$ are in $\TT \backslash( \cup_i [s_i,t_i))$ then~\eqref{eq:check_additive} is a consequence for the same formula with $A$ instead of $\tilde{A}$.
	\item If for instance for some $i \in \Lambda$, $s_i < s \leq t_i \leq v $, then $\beta_i (X_{t_i})$ appears once on the left-hand side of~\eqref{eq:check_additive}, in the definition of $\tilde{A}([s,t])$, and once on the right-hand side, in the definition of $\tilde{A}([s,v])$. An analogous argument allows  us to treat the cases where $u \leq s_i \leq t < t_i$, $s_i < u \leq t_i \leq v \leq t$ and $u \leq s_i \leq v < t_i$.
	\item If there is some $i \in \Lambda$ such that $[u,v] \subset (s_i,t_i)$, $s \leq s_i$ and $t \geq t_i$. In that case, $\tilde{A}([u,v]) = 0$, and
	\begin{align*}
	A([\varphi(s), \psi(t)]) &= A([\varphi(s), s_i]) + A((s_i,t_i)) + A([t_i, \psi(t)]) \\
	&= A([\varphi(s), s_i]) + \alpha_i(X_{s_i}) + \beta_i(X_{t_i}) + A([t_i, \psi(t)]).
	\end{align*}
	The result follows easily by taking into consideration the endpoint terms for $s$ and $t$ thanks to the previous points.
	\\
	The cases when $[s,v] \subset (s_i,t_i) $ or $[u,t] \subset (s_i,t_i)$ are similar and left to the reader.
\end{itemize}

Finally, for this choice of $\tilde A$, if $I \subset [0,1]$ is an interval with $\inf I = s$ and $\sup I = t$, then calling $\tilde{s} := \inf I\cap \TT$, $\tilde{t} := \sup I \cap \TT$ and $\tilde{I} := [\tilde{s}, \tilde{t}]$, then, as $I \backslash \tilde{I}$ is the union of at most two intervals which do not intersect $\TT$:
\begin{equation*}
\tilde A(I) = \tilde A(\tilde{I}) \quad \mbox{is }X_{I \cap \TT}\mbox{-measurable}.
\end{equation*}

The regularity of $\tilde{A}$ in the case when $\Lambda$ is finite is a consequence of the fact that then, for each $i,s$, $t \mapsto \1_{\{ s \leq s_i\leq t <t_i\}} \alpha_i(X_{s_i})$ is right continuous and left limited, and for all $i,t$ $s \mapsto \1_{\{s_i < s \leq t_i \leq t\}} \beta_i(X_{t_i})$ is left continuous and right limited.
\end{proof}

\section{Under finitely many marginal constraints}
\label{sec:finitely-many}
\subsection*{The Schrödinger case}
Let us consider the easiest setting where there are finitely many constraints, i.e.\ $\TT= \left\{t_1,t_2, \dots,t_K\right\}$ and $R$ is Markov and irreducible in the sense of Assumption~\ref{ass:markov_irreducibility}.
Applying Theorem~\ref{thm:sol_schro_additive}, we obtain

\begin{theorem}
\label{thm:discrete_schro}
Suppose $R$ is Markov, irreducible in the sense of Assumption~\ref{ass:markov_irreducibility} and that $\TT := \{ t_1, \dots, t_K \}$ is finite.
If the extended Schrödinger problem~\eqref{eq-11} admits a (unique) solution, then the corresponding additive functional $A([0,1])$ given by Theorem~\ref{thm:sol_schro_additive} writes as:
\begin{equation*}
A\big([0,1]\big)
	=\sum _{i=1}^K f_i(X _{ t_i})
\end{equation*}
for some measurable functions $f_i$, $1\le i\le K$. 
\end{theorem}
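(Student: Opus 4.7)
The plan is essentially a direct application of Theorem~\ref{thm:sol_schro_additive} together with the defining properties of an additive functional. The key observation is that when $\TT$ is finite, the complement $\TT^c$ in $[0,1]$ splits into finitely many disjoint intervals, so the cancellation property~\eqref{eq:A_cancellations} will kill almost every piece of the natural decomposition of $[0,1]$.

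First I would order the times as $t_1 < t_2 < \cdots < t_K$ and note that since $\TT^c$ is a finite union of disjoint open intervals, Theorem~\ref{thm:sol_schro_additive} applies and delivers a \emph{regular} additive functional $A$ such that $\D P/\D R = \exp(A([0,1]))$ together with the cancellation property~\eqref{eq:A_cancellations} and the measurability statement $A(I)$ is $X_{I\cap\TT}$-measurable for every interval $I$. Next I would write the partition
\begin{equation*}
[0,1] = [0,t_1) \sqcup \{t_1\} \sqcup (t_1,t_2) \sqcup \{t_2\} \sqcup \cdots \sqcup \{t_K\} \sqcup (t_K,1]
\end{equation*}
(with the obvious modifications if $t_1 = 0$ or $t_K = 1$). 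Each of these pieces belongs to $\mathcal{I}$, and by finite additivity of $A$,
\begin{equation*}
A\big([0,1]\big) = A\big([0,t_1)\big) + \sum_{i=1}^K A\big(\{t_i\}\big) + \sum_{i=1}^{K-1} A\big((t_i,t_{i+1})\big) + A\big((t_K,1]\big).
\end{equation*}

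Each interval appearing in the sums other than the singletons $\{t_i\}$ is disjoint from $\TT$, so by~\eqref{eq:A_cancellations} all those terms vanish. What remains is $A([0,1]) = \sum_{i=1}^K A(\{t_i\})$, and by the measurability clause in Theorem~\ref{thm:sol_schro_additive} applied to the degenerate interval $I = \{t_i\}$ (for which $I \cap \TT = \{t_i\}$), each $A(\{t_i\})$ is $X_{t_i}$-measurable, i.e.\ equals $f_i(X_{t_i})$ for some measurable $f_i$. This gives the announced decomposition.

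I do not anticipate any real obstacle: the entire work has been absorbed into Theorem~\ref{thm:sol_schro_additive}. The only mild point to be careful about is the boundary cases $t_1 = 0$ or $t_K = 1$, where one of the boundary intervals $[0,t_1)$ or $(t_K,1]$ is empty and the partition has to be adjusted accordingly; this does not affect the conclusion since an empty interval contributes zero to $A$ by the defining property $A(\emptyset) = 0$.
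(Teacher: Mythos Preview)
Your proof is correct and follows essentially the same argument as the paper: decompose $[0,1]$ into the singletons $\{t_i\}$ and the complementary open intervals, kill the latter via~\eqref{eq:A_cancellations}, and use the measurability of $A(\{t_i\})$ to write it as $f_i(X_{t_i})$. The only cosmetic difference is that the paper invokes the measurability property in the definition of an additive functional rather than the sharper clause from Theorem~\ref{thm:sol_schro_additive}, but either suffices here.
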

\begin{proof}
	Let us decompose:
	\begin{equation*}
	A\big([0,1]\big) = A\big( [0,t_1) \big) + A\big( (t_K,1 ] \big) + \sum_{i=1}^K A\big( \{ t_i \} \big) + \sum_{i=1}^{K-1} A\big( (t_i, t_{i+1}) \big) .
	\end{equation*}
	Each term of the form $A( [0,t_1))$,  $A( (t_K,1 ])$ or $A( (t_i, t_{i+1}))$ cancels because of~\eqref{eq:A_cancellations}. Then, each term of the form $A( \{ t_i \} )$ is $X_{t_i}$-measurable by the measurability property of an additive functional, and  hence of the form $f_i(X_{t_i})$.
\end{proof}

\subsection*{The Brödinger case}

We are now interested in adding another marginal constraint to the Schrödinger problem \eqref{eq-11}.  The Brödinger entropy minimization problem  is 
\begin{equation}\label{eq:pb_bro}
H(P|R)\to \textrm{min};\qquad P\in\PO: P_t= \mu_t,\forall t\in\TT,\ P _{ 01}=\pi
\end{equation}
where $ \pi\in\PXX$ is a prescribed  endpoint marginal.

\begin{proposition}
	Problem \eqref{eq:pb_bro} admits a solution if and only if there exists some $Q\in\PO$ such that $Q_t= \mu_t$ for all $t\in\TT,$ $Q _{ 01}=\pi$ and $H(Q|R)< \infty.$ In this case, the solution $P$ is unique.
\end{proposition}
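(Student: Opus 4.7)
The plan is to mimic the proof of Proposition~\ref{res-10}, simply adding the endpoint marginal constraint to the constraint set and checking that the standard direct method of calculus of variations still applies. Concretely, I would argue that the constraint set
\[
\mathcal{C} := \{P \in \PO : P_t = \mu_t \text{ for all } t \in \TT,\ P_{01} = \pi\}
\]
is closed and convex for the narrow topology $\sigma(\PO,\CO)$, and that $H(\sbt|R)$ is strictly convex with narrowly compact sublevel sets on $\PO$ (using that $\Omega$ is Polish; cf.\ the definition of $H$ via $R_W$ in Section~\ref{sec:unbounded}, which only requires $\int W \, dP < \infty$ for some admissible $W$, a property preserved by the narrow limits in the sublevel set).

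For closedness, the only new feature compared to Proposition~\ref{res-10} is the constraint $P_{01} = \pi$. Since $\Omega$ is endowed with uniform convergence, the evaluation maps $X_0, X_1 : \Omega \to \XX$ are continuous, and hence the map $P \mapsto P_{01} = (X_0, X_1)\pf P$ is continuous from $(\PO, \sigma(\PO,\CO))$ to $(\PXX, \sigma(\PXX, \mathrm{C}_b(\XXX)))$. Thus $\{P : P_{01} = \pi\}$ is closed, and intersecting with the closed set $\{P : P_t = \mu_t, \ t \in \TT\}$ keeps closedness. Convexity of $\mathcal{C}$ is immediate from linearity of the marginal maps.

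For the equivalence, if a solution $P$ exists, then in particular $P \in \mathcal{C}$ and $H(P|R) \le H(P|R) < \infty$ by definition of a minimizer, so $Q := P$ is the desired competitor. Conversely, assuming such a $Q$ exists, the set
\[
\mathcal{C} \cap \{P \in \PO : H(P|R) \le H(Q|R)\}
\]
is nonempty (it contains $Q$), closed in $\PO$ by lower semicontinuity of $H(\sbt|R)$ together with the previous step, and contained in the narrowly compact sublevel $\{H(\sbt|R) \le H(Q|R)\}$. Hence it is compact; since $H(\sbt|R)$ is lower semicontinuous on it, it attains its infimum, which is clearly the infimum of $H(\sbt|R)$ over all of $\mathcal{C}$.

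Uniqueness follows from the strict convexity of $H(\sbt|R)$ on $\{H(\sbt|R) < \infty\}$ combined with the convexity of $\mathcal{C}$: any two minimizers $P_1 \neq P_2$ would yield the strictly smaller value $H\!\left(\tfrac{P_1 + P_2}{2}\middle|R\right) < \tfrac12 H(P_1|R) + \tfrac12 H(P_2|R)$, contradicting optimality. I do not expect any serious obstacle here; the only point slightly worth attention is the continuity of $P \mapsto P_{01}$, which is automatic thanks to the uniform topology on $\Omega$.
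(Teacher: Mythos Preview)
Your proposal is correct and follows exactly the approach indicated in the paper, which simply states that the proof ``follows the same line as the proof of Proposition~\ref{res-10}.'' You have spelled out the only additional point, namely the closedness of the extra constraint $P_{01}=\pi$, which is indeed immediate from the continuity of $(X_0,X_1)$.
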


\begin{proof}
	It follows the same line as the proof of Proposition \ref{res-10}.
\end{proof}

However, the solutions of this type of problems are not Markov in general, even when $R$ is Markov. Instead, they are reciprocal in the sense of Definition~\ref{def:reciprocal}. Before proving this and giving the analogue of Theorem~\ref{thm:sol_schro_additive} in this context, let us state and prove another link between Markov and reciprocal measures which will permit us to apply Theorem~\ref{thm:discrete_schro} in the Brödinger case.
\begin{lemma}
\label{lem:reciprocal_markov}
	Let $Q \in \MO$ be conditionable. For $\lambda \in (0,1)$, let us define $\Psi_\lambda$ by:
	\begin{equation}
	\label{eq:def_Psi_lambda}
	\begin{aligned}
	\Psi_{\lambda}:&& C^0([0,1]; \XX) &\longrightarrow C^0([0,1]; \XX \times \XX), \\
	&& \Big(t\mapsto \omega_t\Big) &\longmapsto \Big(t \mapsto (\omega_{\lambda t}, \omega_{1 - (1-\lambda) t})\Big).
 	\end{aligned}
	\end{equation}
	Then $Q$ is reciprocal if and only if for all $\lambda \in (0,1)$, $Q_\lambda := \Psi_{\lambda} {}\pf Q$ is Markov.
\end{lemma}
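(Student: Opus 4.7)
The plan is to reduce the Markov property of the folded measure $Q_\lambda$ at a generic time $t_0$ to the reciprocal property of $Q$ at a single pair of times $(s,u)$ determined by $(\lambda,t_0)$. Concretely, the canonical process of $Q_\lambda$ at time $t_0$ is $Y_{t_0} = (X_s, X_u)$ with $s := \lambda t_0$ and $u := 1-(1-\lambda) t_0$, and as $t$ ranges over $[0,t_0]$ (resp.\ $[t_0,1]$) the first coordinate $X_{\lambda t}$ sweeps $[0,s]$ (resp.\ $[s,\lambda]$) while the second coordinate $X_{1-(1-\lambda)t}$ sweeps $[u,1]$ (resp.\ $[\lambda,u]$). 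Consequently $\sigma(Y_{[0,t_0]}) = \sigma(X_{[0,s]}, X_{[u,1]})$, $\sigma(Y_{[t_0,1]}) = \sigma(X_{[s,\lambda]}, X_{[\lambda,u]}) = \sigma(X_{[s,u]})$, and $\sigma(Y_{t_0}) = \sigma(X_s, X_u)$, so that the Markov condition for $Q_\lambda$ at $t_0$ is \emph{verbatim} the reciprocal condition~\eqref{eq:def_reciprocal} of $Q$ at $(s,u)$.

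Next I would verify that the map $(\lambda, t_0) \mapsto (s,u) = (\lambda t_0,\, 1-(1-\lambda)t_0)$ is a bijection from $(0,1)^2$ onto the open triangle $\{(s,u) : 0 < s < u < 1\}$: solving the system gives $\lambda = s/(1+s-u)$ and $t_0 = 1+s-u$, both lying in $(0,1)$ whenever $0 < s < u < 1$. This bijection turns the statement ``$Q_\lambda$ is Markov at every $t_0$, for every $\lambda \in (0,1)$'' into ``the reciprocal condition of Definition~\ref{def:reciprocal} holds at every pair $(s,u)$ with $0 < s < u < 1$,'' which is precisely reciprocity of $Q$. The Markov property of $Q_\lambda$ at the boundary times $t_0 \in \{0, 1\}$ is automatic, since there either the past or the future is already contained in $\sigma(Y_{t_0})$.

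Before assembling the equivalence I would check that the Markov property is even meaningful for $Q_\lambda$, i.e.\ that $Q_\lambda$ is conditionable: but $(Q_\lambda)_{t_0}$ is the pushforward of $Q$ by $(X_s, X_u)$, hence equals the two-time joint marginal $Q_{s,u}$, which is $\sigma$-finite whenever $Q$ is conditionable by the standard results recalled from~\cite{leonard2014some}. I do not anticipate a serious obstacle in the argument; the only delicate point is keeping careful track of the $\sigma$-algebras under the time reparametrisation, and in particular using that the reciprocal condition~\eqref{eq:def_reciprocal} is symmetric in $s$ and $u$, so that the past/future decomposition of the folded process $Y$ at $t_0$ matches exactly the ``outside/inside'' decomposition of $[s, u]$ appearing in Definition~\ref{def:reciprocal}.
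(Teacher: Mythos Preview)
Your proposal is correct and follows essentially the same approach as the paper: both identify $\sigma(Y_{[0,t_0]})=\sigma(X_{[0,s]},X_{[u,1]})$, $\sigma(Y_{[t_0,1]})=\sigma(X_{[s,u]})$, $\sigma(Y_{t_0})=\sigma(X_s,X_u)$ and then use the parametrisation $\lambda=s/(1+s-u)$, $t_0=1+s-u$ to pass between the Markov condition for $Q_\lambda$ at $t_0$ and the reciprocal condition for $Q$ at $(s,u)$. Your framing of the two directions as a single bijection $(\lambda,t_0)\leftrightarrow(s,u)$ is slightly tidier than the paper's, which treats the implications separately, but the content is identical.
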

\begin{proof}
	In this proof, we call $Z_t = (Z^1_t, Z^2_t)$ the canonical process on $C^0([0,1]; \XX \times \XX)$ at time $t$, and we keep on using the notation $X_t$ to denote the canonical process on $C^0([0,1]; \XX)$ at time $t$. Of course, the Definition~\ref{def:markov} of a Markov process needs to be adapted to processes with values in $\XX \times \XX$.
	
	Let us suppose that $Q$ is reciprocal, and let us take $\lambda \in (0,1)$. As $\Psi_\lambda$ is injective, the path measure $Q_\lambda$ is conditionable. Let us take $t\in[0,1]$, $A\in \sigma(Z_{[0,t]})$ and $B\in \sigma(Z_{[t,1]})$. We have:
	\begin{align*}
	Q_\lambda(A \cap B | Z_t) &= Q\Big( \Psi_\lambda^{-1}\big( A \cap B \big) \Big| X_{\lambda t} , X_{1 - (1-\lambda)t}\Big)\\
	&= Q\Big( \Psi_\lambda^{-1}( A) \cap \Psi_\lambda^{-1}( B ) \Big| X_{\lambda t} , X_{1 - (1-\lambda)t}\Big).
	\end{align*}
	But it is clear that $\Psi_\lambda^{-1}( A)$ is in $\sigma(X_{[0,\lambda t]}, X_{[1 - (1-\lambda) t, 1]})$ and $\Psi_\lambda^{-1}(B)$ is in $\sigma(X_{[\lambda t , 1 - (1-\lambda) t]})$, so that as $Q$ is reciprocal, by~\eqref{eq:def_reciprocal}:
	\begin{align*}
	Q_\lambda(A \cap B | Z_t) &= Q\Big( \Psi_\lambda^{-1}( A) \Big| X_{\lambda t} , X_{1 - (1-\lambda)t} \Big)Q\Big( \Psi_\lambda^{-1}( B ) \Big| X_{\lambda t} , X_{1 - (1-\lambda)t}\Big)\\
	&= Q_\lambda(A|Z_t) Q_\lambda(B|Z_t).
	\end{align*}
	Hence, $Q_\lambda$ is Markov.
	
	Now, let us suppose that for all $\lambda \in (0,1)$, $Q_\lambda$ is Markov. Let us fix $0 < s < u < 1$, $A \in\sigma(X_{[0,s]}, X_{[u,1]})$ and $B \in \sigma(X_{[s,u]})$. We choose $\lambda := s /(1 - (u-s))$, so that for:
	 \begin{equation*}
	 \bar{t} := \frac{s}{\lambda} = \frac{1-u}{1-\lambda} = 1 - (u-s),
	 \end{equation*}
we have both $\lambda \bar{t} = s$ and $1 - (1-\lambda)\bar{t} = u$. Then, as $\Psi_\lambda$ is injective, we have:
\begin{align*}
Q(A \cap B | X_s,X_u) &= Q\Big( \Psi_\lambda^{-1}\big( \Psi_\lambda(A) \big) \cap \Psi_\lambda^{-1}\big( \Psi_\lambda(B) \big) \Big| Z_{\bar{t}}\Big)\\
&= Q_\lambda\big( \Psi_\lambda(A) \cap \Psi_\lambda(B)  \big| Z_{\bar{t}} \big).
\end{align*}
Noticing that $\Psi_\lambda(A) \in \sigma (Z_{[0, \bar{t}]})$ and $\Psi_\lambda(B) \in \sigma(Z_{[\bar{t}, 1]})$ and using the Markov property of $Q_\lambda$, we see that
\begin{align*}
Q(A \cap B | X_s,X_u) &= Q_\lambda\big( \Psi_\lambda(A) \big| Z_{\bar{t}} \big)Q_\lambda\big( \Psi_\lambda(B)  \big| Z_{\bar{t}} \big)
= Q(A|X_s,X_u) Q(B|X_s,X_u),
\end{align*}
completing the proof.
\end{proof}
We are now ready to state and prove the last result of this paper, namely an analogue of Theorems~\ref{thm:sol_schro_additive} and~\ref{thm:discrete_schro} in the case of Brödinger. We do not know for the moment how to get an analogue of formula~\eqref{eq:sol_schro_additive} in this setting, but the case when $\TT$ is finite is tractable. 
\begin{theorem}
	\label{thm:bro}
Suppose that $R$ is reciprocal, and that problem~\eqref{eq:pb_bro} admits a (unique) solution $P$. Then $P$ is reciprocal.

In addition, if $R$ is irreducible in the sense of Assumption~\ref{ass:irreducibility}, and if $\TT := \{ t_1, \dots, t_K \}$ is finite, then there exist measurable functions $\eta, f_1, \dots, f_K$ with values in $[-\infty, + \infty)$ such that
 \begin{equation*}
 \frac{\D P}{\D R} = \exp\left( \eta(X_0, X_1) + \sum_{i=1}^K f_i(X_{t_i})\right).
 \end{equation*}
\end{theorem}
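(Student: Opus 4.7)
The plan unfolds in three phases: reciprocity of $P$, a ``folded'' factorization of $\D P/\D R$ introducing mirror points, and elimination of these mirror points.

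\medskip

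\emph{Step 1: $P$ is reciprocal.} Mirror the Markov argument of Theorem~\ref{thm:sol_schro_additive}. For $0<s<u<1$ and $Q\in\PO$, define the reciprocalization
\[
\widetilde Q^{s,u} := \int_{\XXX} Q_{\mathrm{in}}^{X_s=x,X_u=y} \otimes Q_{\mathrm{out}}^{X_s=x,X_u=y}\, Q_{s,u}(\D x,\D y),
\]
where the subscripts in/out denote the conditional laws of $X_{[s,u]}$ and of $(X_{[0,s]},X_{[u,1]})$ given $(X_s,X_u)=(x,y)$. This operation preserves every one-time marginal of $Q$ (in particular the constraints $\mu_t$ for $t\in\TT$) and, since $0<s<u<1$, the joint law $Q_{01}=\pi$. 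Using the reciprocity of $R$ to factorize $R^{X_s=x,X_u=y}=R_{\mathrm{in}}^{x,y}\otimes R_{\mathrm{out}}^{x,y}$ and applying Lemma~\ref{res-05} fiberwise in the disintegration of $H(Q|R)$ by $(X_s,X_u)$, one obtains $H(\widetilde Q^{s,u}|R)\le H(Q|R)$, with equality iff $Q=\widetilde Q^{s,u}$. Uniqueness of the minimizer $P$ then forces $P=\widetilde P^{s,u}$ for every $0<s<u<1$, i.e.\ $P$ is reciprocal.

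\medskip

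\emph{Step 2: Folded factorization with mirror points.} Fix $\lambda\in(0,1)\setminus\TT$. By Lemma~\ref{lem:reciprocal_markov}, both $P_\lambda:=\Psi_\lambda{}\pf P$ and $R_\lambda:=\Psi_\lambda{}\pf R$ are Markov; Lemma~\ref{lem:tensorization} combined with Lemma~\ref{lem:CT} readily shows $R_\lambda$ satisfies Assumption~\ref{ass:markov_irreducibility}. Because $\Psi_\lambda$ is injective it preserves relative entropy and all marginals, so $P_\lambda$ is the unique solution of the extended Schrödinger problem on the folded path space with reference $R_\lambda$ and full marginal constraints $\pi$ at time $0$ and $(Z_{\tilde t_i}){}\pf P_\lambda$ at each folded time $\tilde t_i$ (any competitor is carried by $\Psi_\lambda(\Omega)$ by $R_\lambda$-absolute continuity, hence lifts back to an unfolded competitor with the same entropy and the original Brödinger marginals). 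Theorem~\ref{thm:discrete_schro} applied on the folded space yields, after unfolding,
\[
\frac{\D P}{\D R} = \exp\bigg(\eta^\lambda(X_0,X_1) + \sum_{i=1}^K g_i^\lambda(X_{t_i},X_{t_i^*})\bigg),
\]
where $t_i^*$ denotes the mirror of $t_i$ under the folding at $\lambda$.

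\medskip

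\emph{Step 3: Eliminating the mirrors (main obstacle).} First, repeating the argument used to prove~\eqref{eq-13} in Theorem~\ref{thm:sol_schro_additive} with $\TT\cup\{0,1\}$ in place of $\TT$ shows that $\D P/\D R$ is $\sigma(X_{\TT\cup\{0,1\}})$-measurable. Now choose $\lambda<t_1$ small enough that every mirror satisfies $t_i^*<t_1$; taking the $R$-conditional expectation of the previous display given $X_{\TT\cup\{0,1\}}$ (which leaves the left-hand side unchanged) and using the reciprocity of $R$ --- which forces $X_{t_i^*}$ conditional on $X_{\TT\cup\{0,1\}}$ to depend only on $(X_0,X_{t_1})$ --- one gets a ``left'' representation
\[
\log\frac{\D P}{\D R} = \eta^\lambda(X_0,X_1) + \sum_{i=1}^K\tilde g_i^\lambda(X_{t_i},X_0,X_{t_1}).
\]
A symmetric choice $\lambda'>t_K$ (so that every $t_i^*>t_K$) yields a ``right'' representation with $(X_{t_K},X_1)$ in place of $(X_0,X_{t_1})$. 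For each $\ell\in\{1,\ldots,K\}$ and reference values $y_j^0$ ($j\ne\ell$) drawn from the supports, consider the difference of $\log(\D P/\D R)$ evaluated at $(x_0,y_1^0,\ldots,y_\ell,\ldots,y_K^0,x_1)$ and at the same tuple with $y_\ell^0$ in place of $y_\ell$: by the left representation this difference is independent of $x_1$, and by the right one independent of $x_0$, hence it is a function of $y_\ell$ alone, which we call $f_\ell(y_\ell)$. Then $\log(\D P/\D R)-\sum_\ell f_\ell(X_{t_\ell})$ is necessarily a function $\eta(X_0,X_1)$, giving the announced factorization. The main technical difficulty is to rigorously justify the pointwise evaluation of $R$-a.s.\ identities at specific tuples; the tensorized irreducibility of Lemma~\ref{lem:tensorization}, which ensures $R_{\TT\cup\{0,1\}}\sim R_0\otimes R_{t_1}\otimes\cdots\otimes R_{t_K}\otimes R_1$, is what makes this step work.
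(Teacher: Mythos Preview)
Your Step~1 is correct and in fact more direct than the paper's route: the paper first folds $P$ and $R$ via $\Psi_\lambda$, shows $P_\lambda$ solves a Schrödinger problem with Markov reference $R_\lambda$, invokes Theorem~\ref{thm:sol_schro_additive} to get $P_\lambda$ Markov for every $\lambda$, and then applies Lemma~\ref{lem:reciprocal_markov}. Your reciprocalization $\widetilde Q^{s,u}$ is the natural two-point analogue of the $\widetilde Q^s$ used in Theorem~\ref{thm:sol_schro_additive} and avoids the detour through folding. Step~2 is essentially the same as in the paper.

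Step~3, however, has two genuine gaps. First, the passage to the ``left'' representation $\log\frac{\D P}{\D R}=\eta^\lambda(X_0,X_1)+\sum_i\tilde g_i^\lambda(X_{t_i},X_0,X_{t_1})$ only follows if you take the conditional expectation of the \emph{logarithm} of the folded identity (so that linearity applies term by term); but then you need each $g_i^\lambda(X_{t_i},X_{t_i^*})$ to have a well-defined $R$-conditional expectation, and nothing in the setup guarantees this---these functions take values in $[-\infty,+\infty)$ with no integrability control. Second, even granting both the left and right representations, your finite-difference conclusion does not close: knowing that each single-coordinate increment $F(\dots,y_\ell,\dots)-F(\dots,y_\ell^0,\dots)$ is a function of $y_\ell$ alone does \emph{not} imply $F-\sum_\ell f_\ell(y_\ell)$ depends only on $(x_0,x_1)$. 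The left form only rules out $(y_i,y_j)$-interactions for $i,j\ge2$, the right form only for $i,j\le K-1$; neither excludes a $(y_1,y_K)$-interaction (for $K=2$ the two representations impose no additivity at all---$H(y_1,y_2)=y_1y_2$ satisfies both shapes). The telescoping you would need fails precisely at the boundary index on each side.

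The paper sidesteps both issues with a single cleaner move. Choose one $\lambda\in(t_K,1)$ so that all mirrors $\varphi(t_i)$ lie strictly above $t_K$; after unfolding, $\log\frac{\D P}{\D R}=\eta(X_0,X_1)+\sum_i g_i(X_{t_i},X_{\varphi(t_i)})$, and since the left side is $\sigma(X_0,X_{t_1},\dots,X_{t_K},X_1)$-measurable, one \emph{disintegrates} (rather than takes conditional expectations) with respect to $(X_{\varphi(t_1)},\dots,X_{\varphi(t_K)})$: for $R_{\varphi(t_1),\dots,\varphi(t_K)}$-a.e.\ $(y_1,\dots,y_K)$ the identity holds $R(\sbt|X_{\varphi(t_i)}=y_i)$-a.e., and the tensorized irreducibility of Lemma~\ref{lem:tensorization} combined with Lemma~\ref{lem:CT} gives $R_{0,t_1,\dots,t_K,1}\ll R((X_0,\dots,X_1)\in\sbt|X_{\varphi(t_i)}=y_i)$. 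One then fixes a single admissible $(y_1,\dots,y_K)$ and sets $f_i(\cdot):=g_i(\cdot,y_i)$. This is exactly the mechanism of Lemma~\ref{lem:sum}, requires no integrability, and produces the additive decomposition in one stroke.
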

\begin{remark}[Dominated reciprocal measures]
	\label{rem:general_doesnt_work}
	By analogy with the Markov case and as already said in the introduction, we could expect that whenever $P$ is reciprocal and absolutely continuous with respect to $R$ reciprocal, then there exists a measurable function $\eta$ and a (regular) additive functional $A$ such that:
	\begin{equation*}
	\frac{\D P}{\D R} = \exp\Big( \eta(X_0, X_1) + A([0,1])\Big).
	\end{equation*}
	We do not know for the moment if this result is true or not, but at least Theorem~\ref{thm:bro} shows that this is true when $P$ is the solution of a discrete version of the Brödinger problem with respect to $R$.
	
	By calling $f := \exp(\eta)$, this formula writes:
	\begin{equation*}
	P =f(X_0, X_1) \times  \exp\Big( A([0,1])\Big) R.
	\end{equation*}
	In the case when $R$ is Markov, then $R':= \exp( A([0,1])) R$ is also Markov. Hence \emph{$P$ is a mixing of the bridges of the Markov measure $R'$ which is absolutely continuous with respect to $R$}. This means that the only way to build reciprocal measures $P$ from a Markov measure $R$ such that $P \ll R$ is to pick a new Markov reference measure  in the class of Markov measures which are dominated by $R$, and then to mix its bridges.
	
	We have a proof of this result in the  situation where $R$ is Markov and irreducible in the sense of Assumption~\ref{ass:markov_irreducibility}, under the important restriction that $R \ll P$. But we decided not to reproduce it here.
\end{remark}
\begin{proof}[Proof of Theorem  \ref{thm:bro}]
Let us take $\lambda \in (0,1)$, and let us call as before $P_\lambda := \Psi_\lambda {}\pf P$ and $R_\lambda := \Psi_\lambda {}\pf R$, $\Psi_\lambda$ being defined in~\eqref{eq:def_Psi_lambda}. As the relative entropy is invariant under push-forwards by injective functions, that is for all $Q\in\PO$, $H(\Psi_\lambda{}\pf Q| \Psi_\lambda {}\pf R) = H(P|R)$, problem~\eqref{eq:pb_bro} can be reformulated in terms of $P_\lambda$ and $R_\lambda$. For this, call:
\begin{equation*}
\TT_1 := \{ t \in [0,1] \mbox{ s.t. } \lambda t \in \TT \}\qquad \mbox{and}\qquad \TT_2 := \{ t \in [0,1] \mbox{ s.t. } 1 - (1-\lambda) t \in \TT \}.
\end{equation*}
Then, calling as before $(Z_t = (Z^1_t,Z^2_t))$ the canonical process on $C^0([0,1]; \XX \times \XX)$, $P_\lambda$ is easily seen as the solution of the following problem (which is  equivalent to~\eqref{eq:pb_bro}):
\begin{equation*}
H(Q|R_\lambda)\to \textrm{min};\quad Q\in\mathrm{P}(C^0([0,1]; \XX \times \XX)): \left\{ \begin{aligned}
Z^1_t{}\pf Q &= \mu_{\lambda t},\forall t\in\TT_1,\\
Z^2_t{}\pf Q &= \mu_{1 - (1-\lambda) t},\forall t\in\TT_2,\\
Q_0&=\pi.
\end{aligned}\right.
\end{equation*}
In particular, it is also the solution of the more constrained problem:
\begin{equation*}
H(Q|R_\lambda)\to \textrm{min};\quad Q\in\mathrm{P}(C^0([0,1]; \XX \times \XX)): \left\{ \begin{aligned}
Z_t{}\pf Q &= (P_\lambda)_t,\forall t\in\TT_1 \cup \TT_2,\\
Q_0&=\pi.
\end{aligned}\right.
\end{equation*}
But this one is exactly of Schrödinger type~\eqref{eq-11}. By Lemma~\ref{lem:reciprocal_markov}, $R_\lambda$ is Markov, so that by Theorem~\ref{thm:sol_schro_additive}, $P_\lambda$ is Markov, and as it is true for any $\lambda \in (0,1)$,  by Lemma~\ref{lem:reciprocal_markov} again, $P$ is reciprocal.

Now we suppose that $R$ is irreducible and that $\TT = \{t_1, \dots, t_K \}$ is finite, and we prove the second part of the statement. We suppose without loss of generality that $ 0 < t_1 < \dots < t_K$. For any $ 0<\lambda<1$ and all $t \in [0, \lambda]$, we call $\varphi(t):= 1 - (1-\lambda)t/\lambda$, so that with the same notations as before, $Z_{t/\lambda} = (X_t, X_{\varphi(t)})$. 
Let us choose $\lambda \in (t_K, 1)$ { so that all the $ \varphi(t_i)$'s are greater than $t_K$, implying $ \varphi(t_i)\neq t_j$ for any $i,j$}.

We have seen that $P_\lambda$ is the solution of a Schrödinger problem, constrained on the times $0, t_1 / \lambda, \dots, t_K / \lambda$. Moreover, we easily see that if $R$ is reciprocal and irreducible in the sense of Assumption~\ref{ass:irreducibility}, then $R_\lambda$ is not only Markov, but also irreducible in the sense of Assumption~\ref{ass:markov_irreducibility}. As a consequence, by Theorem~\ref{thm:discrete_schro}:
\begin{equation*}
\frac{\D P_\lambda}{\D R_\lambda} = \exp\Big( \eta(Z_0) + g_1(Z_{t_1 / \lambda}) + \dots + g_K(Z_{t_K/\lambda}) \Big).
\end{equation*}
for some measurable functions $\eta, g_1, \dots,g_K$.
By the fact that $\Psi_\lambda$ is injective, we easily deduce that:
\begin{equation*}
\frac{\D P}{\D R} = \exp\Big( \eta(X_0, X_1) + g_1(X_{t_1}, X_{\varphi(t_1)}) + \dots + g_K(X_{t_K}, X_{\varphi(t_K)}) \Big).
\end{equation*}

Hence, the only thing to prove is that each $g_i(X_{t_i}, X_{\varphi(t_i)})$ can be replaced by a function $f_i(X_{t_i})$. To do so, remark that by the same argument as in the proof of Theorem~\ref{thm:sol_schro_additive}, $\D P/\D R$ is $\sigma(X_0, X_{t_1}, \dots, X_{t_K}, X_1)$ measurable. As a consequence, there is a measurable function  $F(X_0, X_{t_1}, \dots, X_{t_K}, X_1)$ such that $R\ae$:
\begin{equation*}
F(X_0, X_{t_1}, \dots, X_{t_K}, X_1) =\eta(X_0, X_1) + g_1(X_{t_1}, X_{\varphi(t_1)}) + \dots + g_K(X_{t_K}, X_{\varphi(t_K)}).
\end{equation*}
Disintegrating  this expression with respect to $X_{\varphi(t_1)}, \dots, X_{\varphi(t_K)}$  and using Proposition~\ref{prop:conditioning}, we see that\\ for $R _{ \varphi(t_1),\dots, \varphi(t_K)}$-almost all $(y_1,\cdots,y_K),$ 
\begin{align}\label{eq-Fetag}
\begin{split}
F(X_0, X_{t_1}, \dots, X_{t_K}, X_1)=   \eta(X_0, X_1) + g_1(&X_{t_1}, y_1) + \dots + g_K(X_{t_K}, y_K),\\
 &R(  \sbt |X_{\varphi(t_1)}=y_1, \dots, X_{\varphi(t_K)}=y_K )\ae
\end{split}
\end{align}
On the other hand, we know with Lemma~\ref{lem:tensorization} that
\begin{align*}
R_{0, t_1, \dots, t_K, 1} \ll R_0\otimes_{i=1}^K R_{t_i} \otimes R_1.
\end{align*}
and
\begin{equation*}
  R_0\otimes_{i=1}^K R_{t_i} \otimes_{i=1}^K R_{\varphi(t_{K-i})}\otimes R_1
  	\ll R_{0, t_1, \dots, t_K, \varphi(t_K), \dots, \varphi(t_1), 1}.
\end{equation*}
Disintegrating this last relation with respect to $X_{\varphi(t_1)}, \dots, X_{\varphi(t_K)}$ thanks to Lemma \ref{lem:CT} gives us
\begin{equation*}
 R_0\otimes_{i=1}^K R_{t_i} \otimes R_1 \ll R((X_0, X_{t_1}, \dots, X_{t_K}, X_1)\in \sbt |X_{\varphi(t_1)}=y_1, \dots, X_{\varphi(t_K)}=y_K ),
\end{equation*}
and taking the first relation into account, we arrive at
\begin{align*}
 R_{0, t_1, \dots, t_K, 1}  \ll R((X_0, X_{t_1}, \dots, X_{t_K}, X_1)\in \sbt |X_{\varphi(t_1)} = y_1, \dots, X_{\varphi(t_K)}= y_K ),
\end{align*}
for $R _{ \varphi(t_1),\dots, \varphi(y_K)}$-almost all $(y_1,\cdots,y_K).$
\\
With \eqref{eq-Fetag}, this implies that 
one  can choose $y_1, \dots, y_K$ (in a set with full mass with respect to $R _{ \varphi(t_1),\dots, \varphi(t_K)}$) such that 
\begin{align*}
F(X_0, X_{t_1}, \dots, X_{t_K}, X_1)=   \eta(X_0, X_1) + g_1(X_{t_1}, y_1) +& \dots + g_K(X_{t_K}, y_K),
\quad R\ae
\end{align*}
 The result follows  by choosing $f_i(X_{t_i}) := g_i(X_{t_i}, y_i)$.
\end{proof}

\appendix
\section{Regularity of two indices martingales}
\label{regularitymartingale}
In \cite{bakry1979regularite}, Bakry generalizes the classical \textit{\cadlag} regularity results for martingales in the case two indices martingales. Let us present briefly his results and explain how it is used in our context.

Take $(\Omega, \mathcal{G}, \PP)$ a complete probability space and $(\mathcal{G}^1_u)_{u \in \RR_+}$ and $(\mathcal{G}^2_v)_{v \in \RR_+}$ two right continuous and complete filtrations. We introduce the two indices filtration $(\mathcal{G}_{u,v})_{(u,v) \in \RR_+ \times \RR_+}$ defined for all $(u,v) \in \RR_+ \times \RR_+$ by $\mathcal{G}_{u,v} = \mathcal{G}^1_{u} \cap \mathcal{G}^2_{v}$. We assume the following independence condition:
\begin{quote}
(H) for all $(u,v) \in \RR_+ \times \RR_+$, $\mathcal{G}^1_{u}$ and $\mathcal{G}^2_{v}$ are independent conditionally on $\mathcal{G}_{u,v}$.
\end{quote}

On $\RR_+ \times \RR_+$ we define the binary relations
\begin{gather*}
\mbox{forall } (u,v)\mbox{ and } (\mu, \nu) \in \RR_+ \times \RR_+, \quad (u,v) \preceq (\mu, \nu) \, \Leftrightarrow\, u \leq \mu \mbox{ and } v \leq \nu,\\
\mbox{forall } (u,v)\mbox{ and } (\mu, \nu) \in \RR_+ \times \RR_+, \quad (u,v) \prec (\mu ,\nu) \, \Leftrightarrow\, u < \mu \mbox{ and } v < \nu.
\end{gather*}
The first one is a partial order and the second one is a strict partial order.

In this setting, a martingale is a process $M = (M_{u,v})$ such that fixing $u$ (resp. $v$), $M$ is a $(\mathcal{G}^1_u)$-martingale (resp. $(\mathcal{G}^2_v)$-martingale). This is equivalent with the fact that
\begin{itemize}
\item for all $(u,v) \in \RR_+ \times \RR_+$, $M_{u,v}$ belongs to $L^1(\Omega)$ and is $\mathcal{G}_{u,v}$-measurable,
\item for all $(u,v) \preceq (\mu, \nu)$, 
\[
\EE [M_{\mu, \nu} | \mathcal{G}_{u,v}] = M_{u,v} \quad \mbox{a.e.}
\]
\end{itemize}

Fixing $\omega$, a trajectory is said to be right continuous if for all $(u,v) \succeq (0,0)$,
\[
\lim_{\substack{(\mu, \nu) \to (u,v) \\ (\mu, \nu) \succeq (u,v) }} M_{\mu, \nu} = M_{u,v}.
\] 
It is said to be left limited if for all $(u,v) \succ (0,0)$,
\[
\lim_{\substack{(\mu, \nu) \to (u,v) \\ (\mu, \nu) \prec (u,v) }} M_{\mu, \nu} \mbox{ exists}.
\] 
Remark that in the first case, we use $\preceq$ while in the second one, that is $\prec$ that comes into play.

The main result we will use is the following.
\begin{theorem}
Let $M = (M_{u,v})$ a two indices martingale with the following additional property:
\[
\forall (u,v) \in \RR_+ \times \RR_+, \quad \EE\Big[|M_{u,v}| \log^+\big(|M_{u,v}|\big)\Big] < \infty.
\]
Then $M$ admits a modification such that for every $\omega$, $M(\omega)$ is right continuous and left limited. In particular, $M_{u,v}$ is well defined for all $\omega$ for all $(u,v)$.
\end{theorem}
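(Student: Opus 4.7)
The plan is a two-parameter adaptation of the classical càdlàg-regularization scheme for martingales. First I will establish a two-parameter Doob-type maximal inequality and a two-parameter upcrossing estimate along the countable dense dyadic lattice $D^2\subset\RR_+\times\RR_+$; then I will deduce pathwise existence of $\succeq$-right-limits and $\prec$-left-limits outside a single null set, define $\widetilde M$ as the $\succeq$-limit along the lattice, and verify it is a modification of $M$. By restricting to each compact square $[0,N]^2$ and exhausting, I will assume throughout that the parameter set is bounded.

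The crucial analytic input is a two-parameter maximal inequality à la Cairoli, which is precisely where the $L\log^+ L$ integrability enters. For each fixed $v$, the one-parameter process $(M_{u,v})_u$ is a $(\mathcal{G}^1_u)$-martingale, so Doob's $L^p$-inequality bounds its maximal function; the key observation, enabled by (H), is that $u\mapsto\sup_{u'\le u}|M_{u',v}|$ remains a $(\mathcal{G}^2_v)$-submartingale in $v$, so Doob applies a second time. Iterating, one obtains $\EE\bigl[\sup_G|M_{u,v}|^p\bigr]\le \bigl(p/(p-1)\bigr)^{2p}\EE[|M_{N,N}|^p]$ for every finite grid $G\subset[0,N]^2$ and every $p>1$. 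A standard Orlicz-duality argument at the endpoint upgrades this to
\[
\EE\Big[\sup_{(u,v)\in D\cap[0,N]^2}|M_{u,v}|\Big]\le C\bigl(1+\EE[|M_{N,N}|\log^+|M_{N,N}|]\bigr)<\infty.
\]
Running the same iteration through Doob's upcrossing inequality produces, for each rational $a<b$, a finite bound on the expected number of $\succeq$-upcrossings of $[a,b]$ by $M$ on $D^2\cap[0,N]^2$.

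Summing these estimates over $(a,b)\in\mathbb Q^2$ with $a<b$ and $N\in\mathbb N$, I will extract a $\PP$-null set $\Omega_0$ outside of which the restriction of $M$ to $D^2$ admits, at every $(u,v)$, both the right-limit along $D^2\cap\{\succeq(u,v)\}$ and the left-limit along $D^2\cap\{\prec(u,v)\}$. I then define $\widetilde M_{u,v}(\omega)$ to be the former limit on $\Omega\setminus\Omega_0$ and $0$ on $\Omega_0$. To check that $\widetilde M$ is a modification of $M$, I fix $(u,v)$ and a dyadic sequence $(\mu_n,\nu_n)\searrow(u,v)$: the maximal bound gives uniform integrability of $\{M_{\mu_n,\nu_n}\}$, hence $L^1$-convergence to $\widetilde M_{u,v}$; the martingale identity $\EE[M_{\mu_n,\nu_n}\mid\mathcal{G}_{u,v}]=M_{u,v}$ together with right-continuity of each marginal filtration and (H) (which ensures $\bigcap_n\mathcal{G}_{\mu_n,\nu_n}=\mathcal{G}_{u,v}$) then forces $\widetilde M_{u,v}=M_{u,v}$ a.s. Finally the $\succeq$-right-continuity and existence of $\prec$-left-limits of $\widetilde M$ transfer from $D^2$ to the full parameter set on $\Omega\setminus\Omega_0$ by sandwiching a generic converging sequence between dyadic ones.

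The main obstacle is the two-parameter maximal and upcrossing machinery. The partial order leaves generic points incomparable, so $M$ is not a martingale indexed by any totally ordered set and the one-dimensional Doob arguments cannot be invoked directly; the conditional independence hypothesis (H) is the precise structural assumption allowing the iterated one-dimensional application. Once iterated, these inequalities meet the $L\log^+L$ threshold — the same borderline phenomenon responsible for the failure of $L^1$-boundedness of the strong maximal function in harmonic analysis — which is why the theorem is stated at this Orlicz scale rather than merely at $L^1$.
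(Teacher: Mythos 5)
A preliminary remark: the paper does not prove this theorem at all --- it is quoted from Bakry's article \cite{bakry1979regularite} and used as a black box --- so there is no in-house proof to compare against, and your proposal must be judged on its own merits. Its overall architecture (a Cairoli-type maximal inequality at the $L\log^+L$ scale, regularization along the dyadic lattice, then verification that the regularized process is a modification) is the right shape. The maximal-inequality step is correctly argued: under (H) one has $M_{u',v}=\EE[M_{u',v'}\mid\mathcal{G}^2_v]$ for $v\le v'$, so $v\mapsto\sup_{u'\le u}|M_{u',v}|$ is indeed a $(\mathcal{G}^2_v)$-submartingale and Doob's inequality can be applied twice; the $L^1$ bound on the supremum at the $L\log^+L$ endpoint and the final identification $\widetilde M_{u,v}=M_{u,v}$ a.s.\ (via uniform integrability and $\bigcap_n\mathcal{G}_{\mu_n,\nu_n}=\mathcal{G}_{u,v}$) are also fine.

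The genuine gap is the ``two-parameter upcrossing estimate'', which is the heart of the matter and is asserted rather than proved. The iteration that yields Cairoli's maximal inequality does not transfer to upcrossings: it rests on the supremum of conditional expectations being a submartingale in the second variable, whereas the upcrossing count of $u\mapsto M_{u,v}$ is \emph{not} monotone under $\EE[\,\cdot\mid\mathcal{G}^2_v]$ (conditional expectation can both create and destroy upcrossings; Doob's upcrossing lemma bounds them from above by the terminal value, it does not supply the submartingale property needed to ``apply Doob a second time''). Moreover, you do not say along which ordered structure upcrossings are counted. The natural candidate --- upcrossings along increasing chains of $(D^2,\preceq)$ --- would at best control limits along monotone sequences, while the quadrant limits in the statement are along arbitrary sequences in a closed quadrant: for instance the two boundary rays $\{(u,\nu):\nu>v\}$ and $\{(\mu,v):\mu>u\}$ consist of mutually incomparable points and cannot be interleaved into a chain, so chain-wise oscillation control does not force their limits to agree with the open-quadrant limit. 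The standard route (Cairoli--Walsh, and Bakry's cited paper) avoids upcrossings altogether: under (H) one has $\EE[\,\cdot\mid\mathcal{G}_{u,v}]=\EE\big[\EE[\,\cdot\mid\mathcal{G}^1_u]\,\big|\,\mathcal{G}^2_v\big]$, so on $[0,N]^2$ one writes $M_{u,v}=\EE\big[\EE[M_{N,N}\mid\mathcal{G}^1_u]\,\big|\,\mathcal{G}^2_v\big]$, proves the regularity directly for a dense class of terminal variables (for which the paths are elementary), and transfers it to the general case by applying the $L\log^+L$ maximal inequality to the difference --- which is exactly where the Orlicz hypothesis is consumed. Replacing your upcrossing step by an argument of this kind would close the gap.
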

Let us explain now how to use this result in our setting. In the case of Theorem~\ref{res-02}, we want to show some regularity for a closed martingale of the following type:
 \[
 D_{s,t} := \EE_R[D | X_{[s,t]}]
 \]
with
\begin{itemize}
\item $D \geq 0$ and $\EE[D \log D] < + \infty$,
\item $(\sigma(X_{[0,t]})_{t \in [0,1]}$ and $(\sigma(X_{[s,1]})_{s \in [0,1]}$ are respectively right and left continuous,
\item $R$ is Markov.
\end{itemize}
We define
\begin{itemize}
\item for all $u \in \RR_+$ $\mathcal{G}^1_u := \sigma(X_{[1-u\wedge 1, 1]})$ (which is right continuous),
\item for all $v \in \RR_+$, $\mathcal{G}^2_v := \sigma(X_{[t \wedge 1, 1]})$ (which is right continuous),
\item $M = (M_{u,v})_{(u,v) \in \RR_+ \times \RR_+}$ is defined from $(D_{s,t})$ by the change of variable $(s,t) = (1-u, v)$ as in Figure~\ref{defM}.
\end{itemize}

\begin{figure}
\centering
\begin{tikzpicture}[scale=1]
\fill[green!20] (0,0) -- (0,2) -- (2,0) -- cycle;
\fill[red!20] (0,2) -- (2,2) -- (2,0) -- cycle;
\fill[yellow!20] (0,2) -- (2,2) -- (2,4) -- (0,4) -- cycle;
\fill[yellow!20] (2,0) -- (2,2) -- (4,2) -- (4,0) -- cycle;
\fill[blue!20] (2,2) rectangle (4,4);
\draw[thick, ->] (-0.3,0) -- (4,0) node[below]{$u$};
\draw[thick, ->] (0,-0.3) -- (0,4) node[left]{$v$};
\draw (-5pt,-7pt) node{$0$};
\draw (2, -2pt) -- (2, 2pt) node[below=5pt]{$1$};
\draw (-2pt, 2) -- (2pt, 2) node[left=5pt]{$1$};
\draw[thick, red] (0, 2) -- (2, 0);
\draw (0.6,0.5) node{$\EE[D]$};
\draw (1.4, 1.4) node{$D_{1-u, v}$};
\draw (3,1) node{$D_{0,v}$};
\draw (1,3) node{$D_{1-u,1}$};
\draw (3,3) node{$D_{0,1}$};
\end{tikzpicture}
\caption{ Definition of $M$ from $D$.} \label{defM}
\end{figure}
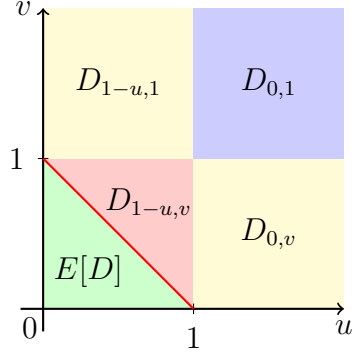
The assumption on the $L \log L$ integrability of $M$ is trivial with Jensen inequality, so we only need to check assumption (H). It is quite clear that it is equivalent to the fact that for all $s \leq t$ in $[0,1]$, $\sigma(X_{[s,1]})$ and $\sigma(X_{[0,t]})$ are independent conditionally on $\sigma(X_{[s,t]})$. But this is a direct application of the Markov property of $R$. As a consequence, $M$ has a modification which is right continuous and left limited. In particular, as far as $D$ is concerned and as illustrated in Figure~\ref{regularityD}, we get up to a modification:
\begin{gather*}
\forall \, 0 \leq s \leq t \leq 1, \, (s,t) \neq (0,1), \quad \lim_{\substack{(\sigma, \tau) \to (s,t) \\
\sigma \leq s, \, \tau \geq t}} D_{\sigma, \tau} = D_{s,t},\\
\forall 0 \leq s < t \leq 1, \quad \lim_{\substack{(\sigma, \tau) \to (s,t) \\
\sigma > s, \, \tau < t}} D_{\sigma, \tau} \mbox{ exists}.
\end{gather*}
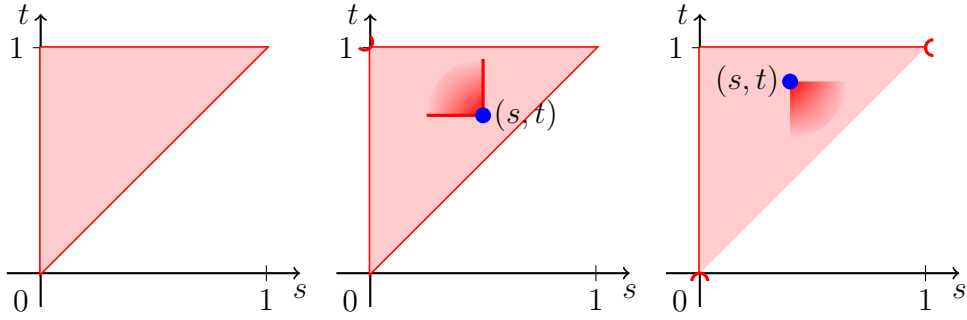
\begin{figure}
\begin{tikzpicture}[scale=1.5]
\draw[thick, ->] (-0.3,0) -- (2.3,0) node[below]{$s$};
\draw[thick, ->] (0,-0.3) -- (0,2.3) node[left]{$t$};
\draw (-5pt,-7pt) node{$0$};
\draw (2, -2pt) -- (2, 2pt) node[below=5pt]{$1$};
\draw (-2pt, 2) -- (2pt, 2) node[left=5pt]{$1$};
\draw[very thick, red] (0,0) -- (2,2) -- (0,2) -- cycle;
\fill[red!20] (0,0) -- (2,2) -- (0,2) -- cycle;
\end{tikzpicture}
\begin{tikzpicture}[scale=1.5]
\draw[thick, ->] (-0.3,0) -- (2.3,0) node[below]{$s$};
\draw[thick, ->] (0,-0.3) -- (0,2.3) node[left]{$t$};
\draw (-5pt,-7pt) node{$0$};
\draw (2, -2pt) -- (2, 2pt) node[below=5pt]{$1$};
\draw (-2pt, 2) -- (2pt, 2) node[left=5pt]{$1$};
\draw[very thick, red] (0,0) -- (2,2) -- (0,2) -- cycle;
\fill[red!20] (0,0) -- (2,2) -- (0,2) -- cycle;
\shade[lower right=red, upper left=red!20, upper right=red!20, lower left=red!20] (1,1.4) -- (1,1.9) arc (90: 180: 0.5) -- cycle;
\draw (1,1.4) node[right]{$(s,t)$};
\draw[very thick, red] (1,1.4) -- (1,1.9);
\draw[very thick, red] (1,1.4) -- (0.5,1.4);
\fill[blue] (1,1.4) circle (2pt);
\draw[very thick, red] (0,2) arc (-45:45:2pt);
\draw[very thick, red] (0,2) arc (-45:-135:2pt);
\end{tikzpicture}
\begin{tikzpicture}[scale=1.5]
\draw[thick, ->] (-0.3,0) -- (2.3,0) node[below]{$s$};
\draw[thick, ->] (0,-0.3) -- (0,2.3) node[left]{$t$};
\draw (-5pt,-7pt) node{$0$};
\draw (2, -2pt) -- (2, 2pt) node[below=5pt]{$1$};
\draw (-2pt, 2) -- (2pt, 2) node[left=5pt]{$1$};
\draw[very thick, red] (2,2) -- (0,2);
\draw[very thick, red] (0,0) -- (0,2);
\fill[red!20] (0,0) -- (2,2) -- (0,2) -- cycle;
\shade[lower right=red!20, upper left=red, upper right=red!20, lower left=red!20] (0.8,1.7) -- (1.3,1.7) arc (0: -90: 0.5) -- cycle;
\fill[blue] (0.8,1.7) circle (2pt);
\draw (0.8,1.7) node[left]{$(s,t)$};
\draw[very thick, red] (2,2) arc (-180:-90:2pt);
\draw[very thick, red] (2,2) arc (180:90:2pt);
\draw[very thick, red] (0,0) arc (90:0:2pt);
\draw[very thick, red] (0,0) arc (90:180:2pt);
\end{tikzpicture}
\caption{\label{regularityD} To the left, the set of definition of $D$. In the middle, the continuity property of $D$. To the right, the "left limit" property of $D$.}
\end{figure}
This is the analogue of the regularity condition for contents stated in Definition~\ref{def:regular_content}.

\section{Conditioning trick}
\label{app:CT}
Let us review a few elementary properties of  conditioning  and absolute continuity.
Let $\mathcal{A}$ and $ \mathcal{B}$ be two Polish spaces  equipped with the their Borel $ \sigma$-fields, and let  $\qq$ and $\pp$ be respectively a nonnegative $ \sigma$-finite measure and a probability measure on   $\mathcal{A}$. 
It is assumed that $\mathcal{A}$ and $ \mathcal{B}$ are Polish to ensure, for any measurable mapping  $\phi:\mathcal{A}\to \mathcal{B},$  the disintegration formula
\begin{align*}
\qq=\int _{ \mathcal{B}}\qq ^{ \phi=b}\, \qq_\phi(\D b),
\end{align*}
where $\qq_\phi:= \phi\pf\qq$ and $\qq ^{ \phi=b}:=\qq(\sbt\mid\phi=b)$ is uniquely well defined $\forall b,\ \qq_\phi\ae$ We refer to \cite{dellacherie1978probabilities}[III, 70] for this standard result.

\begin{proposition}
	\label{prop:conditioning}
For any measurable mapping  $\phi:\mathcal{A}\to \mathcal{B},$ we have
\begin{align*}
\pp\ll\qq
\iff
\left\{
\begin{array}{ll}
\pp_\phi\ll\qq_\phi,& \\
\pp ^{ \phi=b} \ll\qq ^{ \phi=b},&\quad  \forall b,\ \pp_\phi\ae
\end{array}\right.
\end{align*}
\end{proposition}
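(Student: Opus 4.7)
The plan is to prove the two directions separately, with the key tool being the disintegration formulas for $\pp$ and $\qq$ along $\phi$ together with an explicit computation of the Radon-Nikodym density $\D\pp/\D\qq$ in terms of densities at the marginal and conditional levels.

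For the backward implication, suppose $\pp_\phi \ll \qq_\phi$ and $\pp^{\phi=b} \ll \qq^{\phi=b}$ for $\pp_\phi$-a.e.\ $b$. Let $N \subset \mathcal{A}$ be measurable with $\qq(N) = 0$. Disintegrating $\qq = \int \qq^{\phi=b}\,\qq_\phi(\D b)$ yields $\qq^{\phi=b}(N) = 0$ for $\qq_\phi$-a.e.\ $b$, hence for $\pp_\phi$-a.e.\ $b$ by the marginal absolute continuity. The conditional absolute continuity then gives $\pp^{\phi=b}(N) = 0$ for $\pp_\phi$-a.e.\ $b$, and disintegrating $\pp$ along $\phi$ yields $\pp(N) = 0$. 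This direction is entirely routine.

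For the forward implication, the marginal statement $\pp_\phi \ll \qq_\phi$ is immediate: if $\qq_\phi(N) = 0$ then $\qq(\phi^{-1}(N)) = 0$, so $\pp(\phi^{-1}(N)) = 0$ by $\pp\ll\qq$. The conditional statement is where the real work lies. Let $f = \D\pp/\D\qq$ and $g = \D\pp_\phi/\D\qq_\phi$ (extended by $0$ on a $\qq_\phi$-null set). For any bounded measurable $u:\mathcal{A} \to \RR$ and $v:\mathcal{B}\to\RR$, compute $\EE_\pp[u\,v\circ\phi]$ in two ways: using $\pp = f\qq$ together with the disintegration of $\qq$ gives
\[
\EE_\pp[u\,v\circ\phi] = \int_{\mathcal{B}} v(b) \left(\int u(a) f(a)\,\qq^{\phi=b}(\D a)\right) \qq_\phi(\D b),
\]
while disintegrating $\pp$ directly and writing $\pp_\phi(\D b) = g(b)\,\qq_\phi(\D b)$ gives
\[
\EE_\pp[u\,v\circ\phi] = \int_{\mathcal{B}} v(b)\, g(b) \left(\int u(a)\,\pp^{\phi=b}(\D a)\right) \qq_\phi(\D b).
\]
Since $v$ is arbitrary, the two inner integrals coincide $\qq_\phi$-a.e., and specializing to a set where $g(b)>0$ (which carries all the $\pp_\phi$-mass) yields
\[
\int u(a)\,\pp^{\phi=b}(\D a) = \int u(a)\,\frac{f(a)}{g(b)}\,\qq^{\phi=b}(\D a),\qquad \pp_\phi\text{-a.e.\ } b.
\]
Because $\mathcal{A}$ is Polish, a countable separating family of $u$'s identifies the measures, giving $\pp^{\phi=b} = (f/g(b))\,\qq^{\phi=b}$ for $\pp_\phi$-a.e.\ $b$, which in particular proves the claimed absolute continuity.

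The main subtlety will be the measure-theoretic bookkeeping around the exceptional null sets: the disintegration is only defined $\qq_\phi$-a.e., and the density $g$ is only defined $\qq_\phi$-a.e., so I will need to be careful that the conclusion is stated correctly as a $\pp_\phi$-almost-everywhere statement, and that the use of a countable separating family to pass from equality of integrals for every $u$ to equality of the measures $\pp^{\phi=b}$ and $(f/g(b))\,\qq^{\phi=b}$ does not consume more than a countable union of $\qq_\phi$-null $b$'s. The Polishness of $\mathcal{A}$, already used to invoke disintegration, handles this.
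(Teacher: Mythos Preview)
Your proof is correct and follows essentially the same approach as the paper: both directions use the disintegration formula, and in the forward direction the key step is to identify $\pp^{\phi=b}$ with $(f/g(b))\,\qq^{\phi=b}$ where $f=\D\pp/\D\qq$ and $g=\D\pp_\phi/\D\qq_\phi$. The only difference is presentational: the paper defines $\tilde\pp^b:=(f/g(b))\,\qq^{\phi=b}$ directly and asserts it is ``straightforward to check'' that this solves the disintegration problem (invoking uniqueness), whereas you carry out that verification explicitly via the two computations of $\EE_\pp[u\,v\circ\phi]$ and the countable-separating-family argument; similarly, the paper dismisses the backward implication as ``an easy consequence of the disintegration formula,'' which you spell out.
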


\begin{proof}
To show that $\pp\ll\qq$ implies $\pp_\phi\ll\qq_\phi,$ it is enough to remark that $0=\qq_\phi(B):=\qq(\phi ^{ -1}(B)),$ implies that $\pp_\phi(B):=\pp(\phi ^{ -1}(B))=0.$

Let us show that $\forall b,\ \pp_\phi\ae$, $\pp ^{ \phi=b} \ll\qq ^{ \phi=b}$. For this, let us call $f := \D \pp / \D \qq$ and $g := \D\ (\phi \pf \pp) / \D\ (\phi \pf \qq)$. It is straightforward to check that
\begin{equation*}
\tilde\pp^b := \frac{f}{g(b)} \qq ^{ \phi=b}
\end{equation*}
is well defined $\forall b,\ \pp_\phi\ae$ and that it solves the problem of disintegrating $\pp$ with respect to $\phi$. Hence, by uniqueness of the disintegration, $\forall b,\ \pp_\phi\ae$, $\pp ^{ \phi=b} = \tilde\pp^b$. The result follows.
\\
The converse part of the statement is an easy consequence of the disintegration formula.
\end{proof}

Let $\XX$ and $\YY$ be two Polish spaces equipped with their Borel $ \sigma$-fields. Denoting $(X,Y)$  the identity on $\XX\times\YY,$ $\qq_X\in \mathrm{M}(\XX), \qq_Y\in \mathrm{M}(\YY)$ are the marginal measures of $\qq$.

\begin{lemma}[Conditioning trick] \label{lem:CT}
Assume that $\qq\in \mathrm{M}(\XX\times \YY)$ satisfies  \[\qq_X\otimes\qq_Y\ll\qq \ll \qq_X\otimes\qq_Y .\]
Then, for $\qq_Y$-almost all $y\in\YY,$ we have $\qq_X\ll \qq ^{ Y=y}_X:= \qq ^{ Y=y}(X\in\sbt)\ll \qq_X$.\\
 In particular, if some property holds $\qq_X ^{ Y=y}\ae,$ for any $y$ in  some $\YY_o$ with $\qq_Y(\YY_o)>0,$ then this property holds $\qq_X\ae$
\end{lemma}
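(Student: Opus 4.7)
The plan is to apply Proposition~\ref{prop:conditioning} twice, using the projection $\phi:\XX\ttimes\YY\to\YY$, $(x,y)\mapsto y$, together with the elementary fact that push-forward preserves absolute continuity (if $\mu\ll\nu$ and $T$ is measurable, then $T\pf\mu\ll T\pf\nu$, since $T\pf\nu(A)=0$ implies $\nu(T^{-1}A)=0$ implies $\mu(T^{-1}A)=0$). The  starting observation is that both $\qq$ and $\qq_X\otimes\qq_Y$ share the same $\YY$-marginal $\qq_Y$, and that uniqueness of disintegration forces $(\qq_X\otimes\qq_Y)^{Y=y}=\qq_X\otimes\delta_y$ for $\qq_Y$-a.e.\ $y$.

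From $\qq\ll\qq_X\otimes\qq_Y$, Proposition~\ref{prop:conditioning} applied with $\pp=\qq$ and reference measure $\qq_X\otimes\qq_Y$ yields, for $\qq_Y$-almost all $y$, $\qq^{Y=y}\ll\qq_X\otimes\delta_y$. Pushing forward by the first-coordinate projection $X:\XX\ttimes\YY\to\XX$, I  would obtain $\qq_X^{Y=y}\ll\qq_X$ on a $\qq_Y$-conull set. Reversing the roles, from $\qq_X\otimes\qq_Y\ll\qq$ Proposition~\ref{prop:conditioning} gives $\qq_X\otimes\delta_y\ll\qq^{Y=y}$ for $\qq_Y$-almost all $y$, and pushing forward by $X$ again yields $\qq_X\ll\qq_X^{Y=y}$. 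Intersecting the two $\qq_Y$-conull sets delivers both inclusions simultaneously.

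For the second assertion, pick any $y_0\in\YY_o$ lying in the $\qq_Y$-conull set just produced (such a $y_0$ exists because $\qq_Y(\YY_o)>0$). If a property fails on a set $N$ which is $\qq_X^{Y=y_0}$-null, then by $\qq_X\ll\qq_X^{Y=y_0}$ the set $N$ is also $\qq_X$-null. No genuine obstacle is anticipated: the argument is entirely mechanical once Proposition~\ref{prop:conditioning} is in hand. The only point requiring a line of justification is the identity $(\qq_X\otimes\qq_Y)^{Y=y}=\qq_X\otimes\delta_y$, which is immediate from uniqueness of the disintegration since $\int_{\YY}(\qq_X\otimes\delta_y)\,\qq_Y(\D y)=\qq_X\otimes\qq_Y$.
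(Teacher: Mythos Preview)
Your proposal is correct and follows essentially the same approach as the paper: apply Proposition~\ref{prop:conditioning} with $\phi=Y$ once in each direction, using that both $\qq$ and $\qq_X\otimes\qq_Y$ have the same $\YY$-marginal $\qq_Y$ and that $(\qq_X\otimes\qq_Y)^{Y=y}=\qq_X\otimes\delta_y$. The paper's write-up is simply terser, leaving the push-forward step and the disintegration identity implicit.
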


\begin{proof}
Applying previous proposition with $ \mathcal{A}=\XX\times\YY,$ $\pp=\qq_X\otimes\qq_Y$ and  $\phi=Y$, we obtain $\qq_X\ll \qq_X ^{ Y=y},\ \forall y, \qq_Y\ae$ We obtain the other inequality by reversing the roles of $\qq$ and $\qq_X\otimes\qq_Y$. Last statement is a consequence of the definition of the absolute continuity, because:
$
\pp\ll\rr
\implies
[\rr\ae \textrm{ implies }\pp\ae].
$
\end{proof}

\bibliographystyle{plain}
\bibliography{BL1.bib}

\end{document}